\documentclass[11pt]{amsart}

\usepackage{graphicx}
\usepackage{amssymb}
\usepackage{braket,tensor,bm,stmaryrd,fouridx}
\usepackage{tikz-cd}
\usepackage{mathtools}\mathtoolsset{showonlyrefs}
\numberwithin{equation}{section}
\usepackage{stackengine}
\usepackage{rotating}
\usepackage{mathrsfs}

\newcommand{\abs}[1]{\lvert#1\rvert}

\newcommand{\conj}[1]{\overline{#1}}

\DeclareMathOperator{\tf}{tf}
\DeclareMathOperator{\Ad}{Ad}

\DeclareMathOperator{\Ker}{Ker}
\DeclareMathOperator{\End}{End}
\DeclareMathOperator{\Hom}{Hom}

\DeclareMathOperator{\vol}{vol}

\DeclareMathOperator{\Ric}{Ric}

\DeclareMathOperator{\tr}{tr}

\newcommand{\SU}{\mathit{SU}}
\newcommand{\PSU}{\mathit{PSU}}

\newcommand{\GL}{\mathit{GL}}

\newcommand{\intprod}{\mathbin{\raisebox{1pt}{\scalebox{1.3}{$\lrcorner$}}}}

\newcommand{\transpose}[1]{\fourIdx{t}{}{}{}{#1}}

\DeclareMathOperator{\II}{I\hspace{-1.2pt}I}
\DeclareMathOperator{\conn}{Conn}
\DeclareMathOperator{\curv}{Curv}
\DeclareMathOperator{\Res}{Res}

\DeclareMathOperator{\ch}{ch}

\usepackage{amsthm}
\newtheorem{thm}{Theorem}[section]
\newtheorem{prop}[thm]{Proposition}
\newtheorem{lem}[thm]{Lemma}
\newtheorem{cor}[thm]{Corollary}
\theoremstyle{definition}
\newtheorem{dfn}[thm]{Definition}
\theoremstyle{remark}
\newtheorem{rem}[thm]{Remark}
\newtheorem{ex}[thm]{Example}

\usepackage{hyperref}
\title{Secondary Characteristic Classes in CR Geometry}
\author{Shuya Matsumoto}

\begin{document}

\maketitle

\begin{abstract}
    We construct a family of $\mathbb{R}/\mathbb{Z}$-valued CR invariants for compact strictly pseudoconvex CR manifolds admitting pseudo-Einstein contact forms. 
    We define these invariants by applying the theory of Cheeger--Simons differential characters to a globally defined modification of the normal tractor connection. 
    When the CR holomorphic tangent bundle is trivial, their natural $\mathbb{R}$-valued lifts agree with the generalized Burns--Epstein invariants. 
    For CR manifolds arising as boundaries of relatively compact strictly pseudoconvex domains, we identify these differential characters with those determined by the renormalized connection and derive bulk--boundary formulas involving renormalized characteristic numbers and residues of Chern classes. 
    These formulas recover and extend the results of Burns--Epstein and Marugame. 
    We also obtain obstructions to CR embeddings into complex Euclidean space.
\end{abstract}

\tableofcontents

\section{Introduction}
This paper constructs a family of secondary characteristic invariants associated with the normal Cartan connection on compact strictly pseudoconvex CR manifolds admitting pseudo-Einstein contact forms. 
More precisely, we apply the theory of Cheeger--Simons differential characters to a globally defined modification of the normal tractor connection induced by the normal Cartan connection.

The first invariant of this type was introduced by Burns and Epstein \cite{burns_global_1988} for compact three-dimensional strictly pseudoconvex CR manifolds whose CR holomorphic tangent bundle is trivial. 
Their construction proceeds as follows. 
Let $s$ be a global section of the Cartan bundle; such a section exists because $T^{1,0}M$ is trivial. 
Let $\omega$ denote the normal Cartan connection.
The \emph{Burns--Epstein invariant} is defined by integrating the Chern--Simons form associated with the second Chern polynomial $c_2$:
\begin{equation}
\widetilde{\mu}^{\rm BE}_{c_2}(M)
= \int_M s^*T_{c_2}(\omega).
\end{equation}
This invariant also admits a local expression in terms of a chosen pseudohermitian structure. 
Burns and Epstein further showed that the invariant is stationary with respect to all deformations of the CR structure if and only if the CR structure is spherical, that is, if and only if the curvature of the normal Cartan connection vanishes; see also \cite{cheng_burns-epstein_1990}.

Subsequently, Burns and Epstein \cite{burns_characteristic_1990} defined analogous invariants for the boundaries of bounded strictly pseudoconvex domains in $\mathbb{C}^{n+1}$. 
Although the Cartan bundle need not be trivial, an associated vector bundle of rank $n+2$, now known as the \emph{standard tractor bundle}, is trivial in this setting. 
Their construction can be interpreted as using this triviality to define the corresponding Chern--Simons invariants.

They also related these boundary invariants to biholomorphic invariants of the domain. 
More precisely, they modified the Chern connection of a complete Kähler--Einstein metric $g$ on $\Omega$ by subtracting its divergent part. 
The resulting connection $\conj{\nabla}^g$, called the \emph{renormalized connection}, has characteristic forms that are integrable over the domain and whose integrals are biholomorphically invariant.
They then proved that
\begin{align}
\int_\Omega c_{n+1}(\conj{\nabla}^g)
&= \chi(\Omega) + \widetilde{\mu}_{c_{n+1}}(\partial\Omega),\\
\int_\Omega (c_{q_1}\cdots c_{q_m})(\conj{\nabla}^g)
&= \widetilde{\mu}_{c_{q_1}\cdots c_{q_m}}(\partial\Omega),
\quad (q_1+\cdots+q_m=n+1,m\ge 2).
\end{align}

For a general compact strictly pseudoconvex CR manifold, the standard tractor bundle may fail to exist globally, and even when it does exist, it need not be trivial. 
Thus, the preceding construction using an ordinary Chern--Simons form does not directly apply.

Marugame \cite{marugame_renormalized_2016} addressed this difficulty by using a relative Chern--Simons form associated with the renormalized connection $\conj{\nabla}^g$.
Let $\Omega$ be a relatively compact strictly pseudoconvex domain in a complex manifold. 
To construct the renormalized connection, one assumes that the canonical bundle of $\partial\Omega$ carries a flat Hermitian metric. 
Such a metric always exists when $\Omega$ is a bounded strictly pseudoconvex domain in $\mathbb{C}^{n+1}$ and is called a \emph{pseudo-Einstein structure} \cite{lee_pseudo-einstein_1988,hirachi_variation_2017,hislop_cr-invariants_2006}. 
Since Hermitian metrics on the canonical bundle correspond to contact forms, we call the contact form corresponding to a pseudo-Einstein structure a \emph{pseudo-Einstein contact form}.

Fix a $(1,0)$-vector field $\xi$ defined near the boundary whose real part is transverse to $\partial\Omega$ and points outward, and let $\nabla^\xi$ be a connection on $T^{1,0}\conj{\Omega}|_{\partial\Omega}$ satisfying $\nabla^\xi\xi=0$.
Marugame then defined the global CR invariant
\begin{equation}
\widetilde{\mu}_{c_{n+1}}(\partial\Omega) = \int_{\partial\Omega} c_{n+1}(\nabla^\xi,\conj{\nabla}^g),
\end{equation}
and proved the \emph{renormalized Chern--Gauss--Bonnet formula}
\begin{equation}
\int_\Omega c_{n+1}(\conj{\nabla}^g) = \chi(\Omega) + \widetilde{\mu}_{c_{n+1}}(\partial\Omega).
\end{equation}

This paper addresses the above difficulty by using pseudo-Einstein structures and Cheeger--Simons differential characters. 
A pseudo-Einstein structure is used to modify the normal Cartan connection so that it induces a connection on a globally defined vector bundle.

More precisely, we tensor the standard tractor bundle $\mathcal{T}^\#$, which may exist only locally, with the line bundle $\mathcal{E}(1,0)$, the dual of an $(n+2)$nd root of the canonical bundle. 
The resulting vector bundle $\mathcal{T}=\mathcal{E}(1,0)\otimes\mathcal{T}^\#$ is globally well-defined. 
Let $\nabla^{\mathcal{T}^\#}$ denote the connection on $\mathcal{T}^\#$ induced by the normal Cartan connection $\omega$.
The normal Cartan connection alone, however, does not canonically determine a connection on $\mathcal{E}(1,0)$, and hence an additional choice of connection is required. 
We use the flat connection $D^\theta$ on $\mathcal{E}(1,0)$ determined by a pseudo-Einstein contact form $\theta$. 
Since $D^\theta$ is flat, the curvature of the tensor product connection
\begin{equation}
\nabla^{\mathcal{T}}=D^\theta\otimes\nabla^{\mathcal{T}^\#}
\end{equation}
is naturally identified with that of $\nabla^{\mathcal{T}^\#}$.

We then apply the theory of Cheeger--Simons differential characters \cite{chern_characteristic_1974,cheeger_differential_1985} to this tensor product connection. 
A Cheeger--Simons differential character is an $\mathbb{R}/\mathbb{Z}$-valued homomorphism on smooth cycles whose values on boundaries are prescribed by a characteristic form.

In the concrete description used in this paper, such a character is represented by integrating a relative Chern--Simons form over a lift of a cycle to an appropriate Stiefel bundle. Different choices of the lift may change the integral by an integer. Reduction modulo $\mathbb{Z}$ therefore gives a well-defined differential character.

When a suitable global partial frame is available, it determines a section of the Stiefel bundle and hence removes the ambiguity in choosing a lift of the cycle. The differential character then admits an $\mathbb{R}$-valued lift represented by the corresponding relative Chern--Simons form. This lift may depend on the homotopy class of the partial frame, and the difference between the lifts associated with two partial frames is described explicitly by obstruction theory; see Section 3.2.

For the tractor bundle $\mathcal{T}$, there are several natural situations in which the associated Cheeger--Simons differential characters admit such $\mathbb{R}$-valued lifts.
Our main claim is the following:

\emph{The Cheeger--Simons differential characters associated with $\nabla^{\mathcal{T}}=D^\theta\otimes\nabla^{\mathcal{T}^\#}$ define $\mathbb{R}/\mathbb{Z}$-valued CR invariants that are independent of the choice of pseudo-Einstein contact form $\theta$. 
In several situations, these invariants admit canonical or frame-dependent $\mathbb{R}$-valued lifts. 
The invariants introduced by Burns--Epstein and Marugame arise as such lifts.}

The $\mathbb{R}/\mathbb{Z}$-valued CR invariants are defined in Section 3. We then show that, when the CR holomorphic tangent bundle is trivial, their natural $\mathbb{R}$-valued lifts agree with the generalized Burns--Epstein invariants. 
In dimension three, this recovers the original Burns--Epstein invariant.

In Section 4, we show that, when $M$ arises as the boundary of a strictly pseudoconvex domain, the differential characters associated with $\nabla^{\mathcal{T}}$ agree with those arising from the renormalized connection $\conj{\nabla}^g$. 
We prove this in two steps. 
First, we intrinsically reduce $\nabla^{\mathcal{T}}$ to a connection on a rank-$(n+1)$ subbundle of $\mathcal{T}$. 
Second, in the boundary case, we identify the reduced connection with the renormalized connection through an ambient construction.

\begin{thm}[$=\text{Theorem}$ \ref{thm:reduction_theorem}]
    There exists a rank-$(n+1)$ subbundle $\underline{\mathcal{T}}\subset\mathcal{T}$ equipped with a connection $\nabla^{\underline{\mathcal{T}}}$ such that the Cheeger--Simons differential characters associated with $\nabla^{\underline{\mathcal{T}}}$ agree with those associated with $\nabla^{\mathcal{T}}$.
\end{thm}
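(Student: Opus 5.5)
Rather than treat $\nabla^{\mathcal{T}}$ as a general connection, I will use the filtration of the standard tractor bundle and a splitting fixed by a pseudo-Einstein contact form $\theta$. Recall that $\mathcal{T}^\#$ carries the invariant filtration $\mathcal{E}(-1,0)\subset \mathcal{T}^{\#,1}\subset \mathcal{T}^\#$, with composition factors $\mathcal{E}(-1,0)$, $T^{1,0}M\otimes\mathcal{E}(-1,0)$ and $\mathcal{E}(0,1)$, in the appropriate weights. After tensoring with $\mathcal{E}(1,0)$, the bottom piece of $\mathcal{T}$ becomes the trivial line bundle $\mathcal{E}(0,0)$. The tautological section $Z$ of $\mathcal{E}(1,0)\otimes\mathcal{E}(-1,0)$ gives a global nonvanishing section $I$ of $\mathcal{T}$.

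For the subbundle, I take the splitting determined by $\theta$: $\mathcal{T}\cong \mathcal{E}(0,0)\oplus T^{1,0}M\oplus \mathcal{E}(1,1)$. Since $\theta$ trivializes $\mathcal{E}(1,1)$, the last summand is also trivial. I then set $\underline{\mathcal{T}}$ to be a rank-$(n+1)$ complement of the line spanned by $I$ (or its dual top slot); the natural choice is $T^{1,0}M\oplus\mathcal{E}(1,1)$. The connection $\nabla^{\underline{\mathcal{T}}}$ is the compression $\pi\circ\nabla^{\mathcal{T}}|_{\underline{\mathcal{T}}}$. In explicit tractor formulas, $\nabla^{\mathcal{T}}$ is written through the Tanaka--Webster connection, the Schouten-type tensor $P_{\alpha\bar\beta}$, the torsion $A_{\alpha\beta}$, and the $1$-form coming from $D^\theta$. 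The pseudo-Einstein condition makes $D^\theta I$ have a controlled form: the trace part of $P$ combined with $D^\theta$ gives a closed or exact term.

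The main step is to show that for each invariant polynomial $\phi$ the differential characters of $\nabla^{\mathcal{T}}$ and of $\nabla^{\underline{\mathcal{T}}}\oplus d$ coincide, where $d$ is the trivial connection on the line $\langle I\rangle\cong\mathbb{C}$. My plan uses the standard fact from Section~3 that differential characters depend only on the gauge/homotopy data via relative Chern--Simons forms. Consider the linear path $\nabla_t=\nabla^{\mathcal{T}}-t\,B$, where $B$ is the off-diagonal part relative to $\mathcal{T}=\langle I\rangle\oplus\underline{\mathcal{T}}$, corrected by the connection form on $\langle I\rangle$. The difference of the characters is the integral of the transgression $T\phi(\nabla_0,\nabla_1)$ modulo exact forms and integers. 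So the task is to show that this transgression is exact.

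I expect this exactness to be the main obstacle. My first attempt would use the block-triangular structure. Because $I$ spans a $\nabla^{\mathcal{T}}$-almost-invariant line, the connection is upper triangular with respect to the filtration, up to the term coming from $D^\theta$. For a triangular connection, the characteristic forms depend only on the diagonal blocks, and the transgression along the path killing off-diagonal terms vanishes identically, since all curvatures $F_t$ stay triangular with the same diagonal.

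If instead $\nabla^{\mathcal{T}}I$ has a component in $\underline{\mathcal{T}}$ (it does: $\nabla I$ is essentially the identity soldering $TM\to\mathcal{T}^{1}$), triangularity must be taken with respect to the dual filtration. In that case I would use the invariant quotient $\mathcal{T}/\langle I\rangle$, or the annihilator of the top projection, in place of a complement. Concretely, $\underline{\mathcal{T}}=\mathcal{T}^1$ is the invariant corank-one subbundle, whose quotient is $\mathcal{E}(1,1)$, trivialized flatly by $\theta$ through the pseudo-Einstein condition. $\nabla^{\mathcal{T}}$ preserves $\mathcal{T}^1$ only up to the $\theta$-direction, which is where the flatness of $D^\theta$ and the pseudo-Einstein condition must be used to produce a gauge in which the quotient connection is trivial. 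Once the quotient line is flat and trivial, Whitney-sum multiplicativity of characters, $\hat\phi(\nabla^{\underline{\mathcal{T}}}\oplus d)=\hat\phi(\nabla^{\underline{\mathcal{T}}})$ for Chern-type polynomials, finishes the proof.
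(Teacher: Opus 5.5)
Your proposal has a genuine gap, in two places: the choice of complement, and the mechanism that makes the transgression vanish.

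\emph{The subbundle.} Your ``natural choice'' $T^{1,0}M\oplus\mathcal{E}(1,1)$ is $\mathrm{span}(Z_\alpha,Z_{n+1})$ for the frame $\sigma_\theta(\bm{Z})$, and with this choice the characters do not agree. In that frame the $Z_0$-component of $\nabla^{\mathcal{T}}Z_0$ is the $(0,0)$ entry of the exact Weyl connection form plus the $D^\theta$ term, which equals $\frac{\sqrt{-1}}{n}P\theta$. The trace of the full connection form is closed, since $c_1(\nabla^{\mathcal{T}})=0$. Hence the compressed connection has $c_1=\frac{\sqrt{-1}}{2\pi}d\bigl(-\frac{\sqrt{-1}}{n}P\theta\bigr)=\frac{1}{2\pi n}d(P\theta)$. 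This is nonzero wherever $P\neq0$ (restrict to $HM$), for example on the standard sphere. A differential character determines its curvature form, so $S_{c_1}$ cannot agree.

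Your $\mathcal{T}^1$-variant fails the same way if the quotient line is split off by $Z_{n+1}$: the diagonal entry on that line is again $\frac{\sqrt{-1}}{n}P\theta$. That form is not closed, so the trace part of $P$ combined with $D^\theta$ does not give a closed or exact term. The term has to be removed algebraically by moving the complement. The paper does this with $\sigma'_\theta$, i.e. $Z_{n+1}\mapsto Z_{n+1}+\frac{P}{n}Z_0$, which makes the $(0,0)$ entry vanish. Equivalently, $\underline{\mathcal{T}}$ is the $D^\theta$-horizontal lift of $T^{\mathbb{C}}M/T^{0,1}M$.

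\emph{The transgression.} You rightly call its exactness the main obstacle, but you never establish it, and triangularity of the connection cannot supply it. $\nabla^{\mathcal{T}}$ is not triangular for either splitting. Relative to $\langle Z_0\rangle\oplus\underline{\mathcal{T}}$, both the soldering column $(\theta^\alpha,\sqrt{-1}\theta)$ and the first row $(-\sqrt{-1}A_{\beta\rho}\theta^\rho+\sqrt{-1}\kappa_\beta\theta,\ \kappa_\rho\theta^\rho+\cdots)$ are in general nonzero. Nor does $\nabla^{\mathcal{T}}$ preserve $\mathcal{T}^1=\mathrm{span}(Z_0,Z_\alpha)$ ``up to the $\theta$-direction'': $\nabla^{\mathcal{T}}Z_\beta$ has $Z_{n+1}$-component $-\theta_\beta$, a $(0,1)$-form. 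Whitney-sum multiplicativity only becomes available after the off-diagonal blocks have been shown not to matter.

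The missing idea is that the \emph{curvature}, not the connection, is triangular. Since $D^\theta$ is flat, the curvature of $\nabla^{\mathcal{T}}$ is the normal tractor curvature, which annihilates $Z_0$. The paper sets $\II_{\underline{\mathcal{T}}}=0$ to obtain $\widetilde{\nabla}^{\mathcal{T}}$. It then checks from the structure equations that every connection $(1-t)\widetilde{\nabla}^{\mathcal{T}}+t\nabla^{\mathcal{T}}$ still has curvature killing $Z_0$. In the first column only the $(0,0)$ entry depends on $t$. It equals $t(-\sqrt{-1}A_{\beta\rho}\theta^\rho\wedge\theta^\beta+\sqrt{-1}\kappa_\beta\theta\wedge\theta^\beta+\sqrt{-1}\kappa_\rho\theta^\rho\wedge\theta)$, which vanishes by $A_{\alpha\beta}=A_{\beta\alpha}$. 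The other entries are $t$-independent and vanish because the tractor curvature kills $Z_0$.

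The difference of the two connections takes values in $\langle Z_0\rangle$, so the polarized $c_q$ vanishes pointwise. Next, $\widetilde{\nabla}^{\mathcal{T}}$ and $\nabla^{Z_0}\oplus\nabla^{\underline{\bm{s}}_{\mathrm{taut}}}$ are genuinely block lower triangular with zero $(Z_0,Z_0)$ entry. This gives the equality of relative Chern--Simons forms on the Stiefel bundles.

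Once you have the correct complement and this curvature identity, your own plan can also be completed. The single path to the block-diagonal connection works, using the Bianchi identity on the first column of the curvature, followed by Whitney sum. Your $\mathcal{T}^1$-variant works dually with the complement $\langle Z_{n+1}-\frac{P}{n}Z_0\rangle$, using that the curvature takes values in $Z_0^\perp=\mathcal{T}^1$. Neither ingredient appears in your proposal.
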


\begin{thm}[$=\text{Theorem}$ \ref{thm:ambient_construction}]
    Suppose that the CR manifold arises as the boundary of a strictly pseudoconvex domain $\Omega$. 
    Then there exists a rank-$(n+2)$ vector bundle $\widetilde{\mathcal{T}}\to\conj{\Omega}$ equipped with a metric connection $\nabla^{\widetilde{h}}$ such that:
    \begin{enumerate}
        \item $\widetilde{\mathcal{T}}$ contains $T^{1,0}\conj{\Omega}$ as a subbundle;
        \item the connection on $T^{1,0}\conj{\Omega}$ obtained by projecting $\nabla^{\widetilde{h}}$ onto this subbundle coincides with the renormalized connection $\conj{\nabla}^g$; and
        \item there exists an isomorphism of pairs of vector bundles $(\widetilde{\mathcal{T}}|_{\partial\Omega},T^{1,0}\conj{\Omega}|_{\partial\Omega})\simeq(\mathcal{T},\underline{\mathcal{T}})$ under which the pair of connections $(\nabla^{\widetilde{h}},\conj{\nabla}^g)$ corresponds to $(\nabla^{\mathcal{T}},\nabla^{\underline{\mathcal{T}}})$.
    \end{enumerate}
\end{thm}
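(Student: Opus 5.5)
The plan is to realize $\widetilde{\mathcal{T}}$ as the restriction of the tangent bundle of an ambient-type space over $\conj{\Omega}$. The construction splits $T^{1,0}$ of that space into a vertical line and a horizontal copy of $T^{1,0}\conj{\Omega}$, and the renormalized connection appears as the horizontal compression of the Levi-Civita/Chern connection. This is the complex analogue of the Fefferman ambient metric, built here from the Kähler--Einstein potential rather than from a formal expansion.

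\textbf{Step 1 (ambient space).} Let $\rho$ be a defining function of $\Omega$ normalized so that $g=-\partial\bar\partial\log(-\rho)$ is the complete Kähler--Einstein metric; by Cheng--Yau, $\rho$ is smooth up to order $n+1$ with a logarithmic term. The pseudo-Einstein contact form $\theta$ fixes a flat Hermitian metric on $K_{\partial\Omega}$. Extend this to a Hermitian line bundle $L\to\conj{\Omega}$ that plays the role of $\mathcal{E}(-1,0)$, a root of the canonical bundle, with fibre coordinate $z^0$. On $L^\times$ consider the Kähler potential $\widetilde{h}=\abs{z^0}^2(-\rho)$, written with the metric of $L$ so that it is invariant. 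Set $\widetilde{\mathcal{T}}=T^{1,0}L^\times/\mathbb{C}^*$, a rank-$(n+2)$ bundle over $\conj{\Omega}$. Let $\nabla^{\widetilde{h}}$ be the descended Chern connection of the metric $\partial\bar\partial\widetilde{h}$, corrected by the flat connection induced from $\theta$ so that it extends smoothly to the boundary. The Euler field $Z=z^0\partial_{z^0}$ is a section of $\widetilde{\mathcal{T}}$.

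\textbf{Step 2 (properties (1) and (2)).} Take the $\partial\bar\partial\widetilde{h}$-orthogonal complement of $Z$. A direct computation identifies it with $T^{1,0}\conj{\Omega}$ via the projection $L^\times\to\conj{\Omega}$, which gives (1). For (2), I will compute the projected connection in a local frame. The restricted metric is $\abs{z^0}^2(-\rho)\,g$, and its compressed Chern connection differs from the Chern connection of $g$ by terms involving $\partial\log(-\rho)$. Burns--Epstein/Marugame define the renormalized connection exactly as the Chern connection of $g$ minus such a divergent part, namely $\conj{\nabla}^g=\nabla^g+\partial\log(-\rho)\otimes\mathrm{id}+(\cdot)\,\partial\rho$-type terms. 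So the goal is a matching of connection forms, which should follow from the cone structure. This comparison is where I expect most of the computational effort: the second fundamental form of the complement of $Z$ must cancel precisely the divergent part, and the log terms in $\rho$ must contribute only at boundary order at least $n+1$, where they do not affect smoothness of the characteristic forms.

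\textbf{Step 3 (property (3), the boundary identification).} This is the main obstacle. Restricted to $\partial\Omega$, the bundle $L^\times|_{\partial\Omega}$ is the Fefferman-type $\mathbb{C}^*$-bundle, and $\widetilde{\mathcal{T}}|_{\partial\Omega}$ should be the standard tractor bundle twisted by $\mathcal{E}(1,0)$. I will invoke the known characterization of the normal tractor connection: it is the unique connection compatible with the tractor metric, preserving the filtration $\mathcal{E}(-1,0)\subset\mathcal{T}^\#$ with the correct soldering, and with normalized curvature. I will verify that the boundary restriction of $\nabla^{\widetilde{h}}$ satisfies each of these conditions. Metric compatibility holds because $\widetilde{h}$ is Kähler. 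Soldering comes from $\nabla Z=\mathrm{id}$. Normality follows from the Einstein condition on $g$ to sufficiently high order, via the complex Monge--Ampère equation, in the same way the ambient metric induces the tractor connection in the conformal case. The $D^\theta$ twist is recovered because the Hermitian structure on $L$ along $\partial\Omega$ is the flat one from $\theta$.

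Finally, $T^{1,0}\conj{\Omega}|_{\partial\Omega}$, viewed as the complement of $Z$, must match the subbundle $\underline{\mathcal{T}}$ of Theorem~\ref{thm:reduction_theorem}. The intended route is to check that $\underline{\mathcal{T}}$ is defined there as the $h$-orthogonal complement of a distinguished section determined by $\theta$, and to identify that section with $Z$ at the boundary. The compressed connections then correspond automatically, because compression is defined by the same orthogonal projection on both sides.
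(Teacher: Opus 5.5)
Your Steps 2 and 3 rest on taking $T^{1,0}\conj{\Omega}\subset\widetilde{\mathcal{T}}$ to be the $\partial\conj{\partial}\widetilde{h}$-orthogonal complement of the Euler field $Z$, and that is the wrong subbundle. Since $\widetilde{h}(Z,\conj{Z})$ is a multiple of $\rho$, $Z$ is null along $\partial\Omega$. There $Z^\perp=\Ker\partial\bm{\rho}$ contains $Z$ itself and equals $\langle Z\rangle\oplus T^{1,0}M$. So it is not a complement of $\langle Z\rangle$, it does not contain $\xi$ (because $\partial\rho(\xi)=1$), and it projects onto $T^{1,0}M$ rather than $T^{1,0}\conj{\Omega}|_M$.

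The compression is already wrong in the interior. Work in a gauge with $\widetilde{h}_\theta=\abs{z^0}^{-2}$ and the frame $(Z,\partial_i)$. There $Z^\perp$ has frame $E_i=\partial_i-\rho^{-1}\rho_iZ$, and orthogonal projection onto $Z^\perp$ is projection along $Z$. Because $\nabla Z=\mathrm{id}$, projecting $\nabla E_i$ gives the connection form $\omega_i^{\,\,\,j}-\rho^{-1}\rho_i\,dz^j$, where $\omega_i^{\,\,\,j}$ is the lower-right block in $(Z,\partial_i)$. That block is the projection along $Z$ onto the horizontal lift, i.e. the renormalized connection. Hence your compressed connection is $\conj{\nabla}^g$ minus $\rho^{-1}\rho_i\theta^j$, which equals $\nabla^g+\partial\log(-\rho)\otimes\mathrm{id}$, the Chern connection of the quotient metric $(-\rho)g$. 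It retains the divergent term $\rho^{-1}\rho_i\theta^j$ of $Y$, so the cancellation you hope for in Step 2 does not happen. For the ball, $\rho=\abs{z}^2-1$, the correct projection is the flat connection, equal to $\conj{\nabla}^g$, whereas yours has form $-\conj{z}^i\,dz^j/(\abs{z}^2-1)$.

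The premise of your last paragraph fails for the same reason. $Z_0$ is null, so $Z_0^\perp=\mathrm{span}(Z_0,\bm{Z})$. By contrast, $\underline{\mathcal{T}}$ is spanned by $\bm{Z}$ and $Z_{n+1}+\frac{P}{n}Z_0$, which pairs to $1$ with $Z_0$. It is the $D^\theta$-horizontal lift of $T^{\mathbb{C}}M/T^{0,1}M$, not an orthogonal complement.

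What is needed is the non-orthogonal splitting $\langle\widetilde{Z}_0\rangle\oplus\Ker\partial\log\widetilde{h}_\theta$, given by the horizontal lift for the flat Chern connection of $\widetilde{h}_\theta$, with projection taken along $\widetilde{Z}_0$. This splitting is smooth across $\partial\Omega$ and contains $\xi$. With it, the paper computes (following Marugame) the connection form of $\nabla^{\widetilde{h}}$ in the frame $(\widetilde{Z}_0,\bm{Z},\xi)$:
\begin{itemize}
\item first column $(0,\theta^\alpha,\partial\rho)$;
\item first row $u_i=\rho_{ik}\theta^k/\rho$;
\item lower-right block exactly the renormalized connection form $\theta_i^{\,\,\,j}$.
\end{itemize}
This gives (2) at once. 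An entrywise comparison with \eqref{eq:exact_Weyl+pseudo_Einstein}, using Marugame's boundary formulas, then gives (3) for the pair. Your normality characterization in Step 3 could replace that comparison for $\nabla^{\mathcal{T}}$ alone, but it says nothing about $\underline{\mathcal{T}}$ or $\nabla^{\underline{\mathcal{T}}}$.

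Separately, do not use the exact Cheng--Yau solution. In a general complex manifold a complete Kähler--Einstein metric on $\Omega$ need not exist, for example when $\Omega$ contains a compact curve on which $\mathcal{K}$ is negative, as for a blown-up ball. Moreover, the $\conj{\nabla}^g$ in the statement is built from a Fefferman defining function, with $g$ and $\widetilde{h}$ prescribed only near $\partial\Omega$.
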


Combining these two theorems with the Stokes formula for differential characters and the residue formula developed in Section 4, we obtain bulk--boundary formulas expressing our invariants in terms of renormalized characteristic numbers. 
When an $\mathbb{R}$-valued lift is defined by a partial frame, the correction term is given by the pairing of the corresponding Chern-class residue with a complementary characteristic class. 
These formulas recover and extend the formulas of Burns--Epstein \cite{burns_characteristic_1990} and Marugame \cite{marugame_renormalized_2016}.

In Section 5, we compute our invariants for two classes of examples: boundaries of Reinhardt domains and regular Sasakian $\eta$-Einstein manifolds.
The Reinhardt-domain example gives a family along which a natural $\mathbb{R}$-valued lift varies smoothly. 
For example, for the boundaries of 
\begin{equation}
    \textstyle
    \Omega_r = \left\{(z_1,\ldots,z_4) \in (\mathbb{C}^{\times})^4 \mid \sum_{i=1}^4 (\log \abs{z_i})^2 < r^2 \right\},
\end{equation}
we have 
\begin{equation}
        \widetilde{\mu}_{c_2\cdot c_2}(\partial \Omega_r) = \frac{75\pi^2}{256r^4}.
\end{equation}
The computation for regular Sasakian $\eta$-Einstein manifolds, together with the Rossi-sphere computation, shows that the invariants associated with $c_1\cdot c_1$ and $c_2$ are not universally proportional in dimension three.

Our secondary invariants also yield obstructions to CR embeddings. 
More precisely, the nonvanishing of an invariant associated with a Chern monomial of the form $c_1\cdot\Phi$ obstructs the existence of a partial frame of $\mathcal{T}$ of the required rank. 
Since $\mathcal{T}$ is trivial for every CR hypersurface in $\mathbb{C}^{n+1}$, such nonvanishing obstructs a CR embedding into $\mathbb{C}^{n+1}$; see Proposition \ref{thm:obstruction_to_embedding} for a precise statement.
For the standard CR lens space $\mathbb{S}^3/\mathbb{Z}_m$, $m\ge2$, which does not admit even a topological embedding into $\mathbb{C}^2$, we obtain
\begin{equation}
    \mu_{c_1\cdot c_1}(\mathbb{S}^3/\mathbb{Z}_m) = -\frac{4}{m}+\mathbb{Z}.
\end{equation}

Finally, several other global CR invariants have been defined for compact strictly pseudoconvex CR manifolds admitting pseudo-Einstein structures. 
These include the total $Q'$-curvature \cite{case_paneitz-type_2013,hirachi_q-prime_2014}, the total $\mathcal{I}'$-curvatures \cite{case_p-operator_2020,marugame_renormalized_2021,case_cal_2023}, and the global CR invariants via renormalized characteristic forms \cite{marugame_renormalized_2021}. 
The first two are special cases of the last.

Takeuchi \cite{takeuchi_formulae_2024} conjectured that the Burns--Epstein invariant associated with the top Chern polynomial lies in the linear span of the global CR invariants via renormalized characteristic forms, and proved this conjecture for $n=1,2$. 
In dimension three, there is, up to normalization, only one global CR invariant via renormalized characteristic forms, and Takeuchi's result shows that it is a constant multiple of $\widetilde{\mu}_{c_2}$. 
By contrast, our computations show that $\widetilde{\mu}_{c_1\cdot c_1}$ is not proportional to $\widetilde{\mu}_{c_2}$. 
Thus, $\widetilde{\mu}_{c_1\cdot c_1}$ cannot be expressed as a global CR invariant via renormalized characteristic forms.

\subsection*{Notation}
Throughout this paper, we use Penrose's abstract index notation \cite{penrose_spinors_1984}. 
Lowercase Greek indices $\alpha,\beta,\gamma,\delta,\rho,\sigma,\ldots$ range from $1$ to $n$; lowercase Roman indices $i,j,k,l,\ldots$ range from $1$ to $n+1$; and uppercase Roman indices $A,B,\ldots$ range from $0$ to $n+1$.

\subsection*{Acknowledgments}
The author is deeply grateful to his supervisor, Kengo Hirachi, for introducing him to this problem and for many valuable suggestions.
He would like to thank Taro Asuke for suggesting the use of differential characters. 
He is also indebted to Taiji Marugame for helpful comments in seminars and for pointing out several errors in an earlier version of this work. 
He thanks Tatsuo Suwa for valuable advice on residue theory. 
He also thanks Yuya Takeuchi for suggesting several examples and for helpful comments on Sasakian geometry. 
Finally, he thanks Yoshiaki Suzuki for useful feedback in seminars.

This work was supported by JSPS KAKENHI Grant Number JP26KJ0948 and by the World-leading Innovative Graduate Study for Frontiers of Mathematical Sciences and Physics (WINGS-FMSP), the University of Tokyo.

\section{CR Geometry}

In this section, we present the necessary definitions and results on CR structures.
The construction of the Cartan bundle is explained in some detail. 

\subsection{Basic definitions}
\label{subsec:Basic}
Let $M$ be a smooth manifold of dimension $2n+1$. 
A hyperplane distribution $HM \subset TM$ equipped with an almost complex structure $J \in \End{HM}$ is called an \emph{almost CR structure}.
It gives rise to a decomposition 
\begin{equation*}
    H^{\mathbb{C}}M = T^{1,0}M \oplus T^{0,1}M,
\end{equation*}
where $T^{1,0}M$ and $T^{0,1}M$ are the eigen-distributions of $J$ corresponding to the eigenvalues $\sqrt{-1}$ and $-\sqrt{-1}$, respectively.
An almost CR structure is an \emph{CR structure} if it is integrable in the sense that 
\begin{equation*}
    [T^{1,0}M,T^{1,0}M] \subset T^{1,0}M.
\end{equation*}
A smooth manifold equipped with a CR structure is called a CR manifold.
A typical example of a CR manifold is a real hypersurface $M$ of a complex manifold $X$ of complex dimension $n+1$, where the CR structure is given by $T^{1,0}M = T^{1,0}X|_M \cap T^{\mathbb{C}}M$.

Let $M$ be a CR manifold and assume that the bundle $TM/HM$ is oriented. 
Let $\theta$ be a positive global section of $(TM/HM)^* = HM^\perp$.
Then $\theta$ is a \emph{contact form}, i.e., $\theta \wedge (d\theta)^n \ne 0$ everywhere, if and only if the \emph{Levi form} 
\begin{equation*}
    l_\theta (Z,\conj{W}) \coloneqq -\sqrt{-1} d\theta (Z,\conj{W}) = \sqrt{-1} \theta([Z,\conj{W}]), \quad Z,W \in T^{1,0}M
\end{equation*}
is non-degenerate.
A choice of contact form is called a \emph{pseudohermitian} structure.
It follows that $l_{e^\Upsilon \theta} = e^\Upsilon l_\theta$ for a smooth real function $\Upsilon$.
We say that the CR manifold equipped with the prescribed orientation of $TM/HM$ is \emph{strictly pseudoconvex} if the Levi form of one, and hence every, positive contact form is positive definite.

The bundle of $(n+1)$-forms $\mathcal{K}_M=\wedge^{n+1}(T^{0,1}M)^\perp$ is called the \emph{canonical bundle} of $M$.
For integers $w_1,w_2 \in \mathbb{Z}$, define the density bundle by 
\begin{equation*}
    \mathcal{E}(w_1,w_2) = \mathcal{K}_M^{-w_1/(n+2)} \otimes (\conj{\mathcal{K}}_M)^{-w_2/(n+2)}.
\end{equation*}
Since the $(n+2)$nd root of $\mathcal{K}_M$ need not exist globally, the density bundle is intrinsic and globally defined if $w_1-w_2 \in (n+2)\mathbb{Z}$; otherwise, it is defined only locally.
Moreover, if $w_1=w_2\eqqcolon w$, the real part $\mathcal{E}_{\mathbb{R}}(w,w)$ is naturally defined and oriented.

\begin{lem}[cf. {\cite[Lemma 3.2]{lee_fefferman_1986}}]
    Let $M$ be a strictly pseudoconvex CR manifold. There is a canonical isomorphism between $\mathcal{E}_{\mathbb{R}}(1,1)$ and $TM/HM$.
\end{lem}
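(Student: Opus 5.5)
The isomorphism will be constructed by pairing a real density of weight $(1,1)$ with the contact forms it determines. Concretely, I would define a map $\mathcal{E}_{\mathbb{R}}(1,1)\to TM/HM$ by first building a canonical bundle map $\mathcal{E}_{\mathbb{R}}(-1,-1)\to HM^\perp$ and then dualizing, using that $\mathcal{E}(-1,-1)$ is dual to $\mathcal{E}(1,1)$. Since both bundles are real line bundles, a fiberwise linear map that is nonzero on nonzero vectors is automatically an isomorphism. The task is therefore to write down a natural map and check that it is well defined and nonvanishing.

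\textbf{Step 1 (the pairing).} Fix a point $p$ and a contact form $\theta$ near $p$. Locally choose a nonvanishing section $\zeta$ of $\mathcal{K}_M$ that is \emph{volume-normalized} with respect to $\theta$, in the sense of Lee and Farris:
\begin{equation}
  \theta\wedge(d\theta)^n = c_n\, \theta\wedge(T\intprod\zeta)\wedge(T\intprod\conj{\zeta}),
\end{equation}
where $T$ is the Reeb field of $\theta$ and $c_n$ is a fixed universal constant. Such a $\zeta$ exists and is unique up to multiplication by a function of modulus one. This uses strict pseudoconvexity: by nondegeneracy of the Levi form, the right-hand side is a nonzero real multiple of $\theta\wedge(d\theta)^n$, of fixed sign for the correct $c_n$, so we can rescale $\zeta$ to get equality. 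The quantity $\abs{\zeta}^{2/(n+2)}$ defines a section of $\mathcal{E}_{\mathbb{R}}(-1,-1)=(\mathcal{K}_M\otimes\conj{\mathcal{K}}_M)^{1/(n+2)}$, and it is independent of the phase ambiguity. Note that $\zeta\otimes\conj{\zeta}$ lives in $\mathcal{K}_M\otimes\conj{\mathcal{K}}_M$, which is a real positive line bundle, so its positive $(n+2)$nd root exists globally and this root is well defined. I would then send this density to $\theta$.

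\textbf{Step 2 (scaling check; the main obstacle).} The key computation is that the assignment respects scaling. Replace $\theta$ by $\hat\theta=e^{\Upsilon}\theta$. Then $\hat\theta\wedge(d\hat\theta)^n=e^{(n+1)\Upsilon}\theta\wedge(d\theta)^n$, and the Reeb field changes by $\hat T=e^{-\Upsilon}(T+\text{horizontal})$. The normalization equation should then be satisfied by $\hat\zeta=e^{(n+2)\Upsilon/2}\zeta$: the contractions contribute $e^{-2\Upsilon}$, the leading factor $\hat\theta$ contributes $e^{\Upsilon}$, and the horizontal correction to $T$ drops out after wedging with $\theta$ since $\zeta$ is an $(n+1,0)$-form. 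Consequently $\abs{\hat\zeta}^{2/(n+2)}=e^{\Upsilon}\abs{\zeta}^{2/(n+2)}$. This says that the map density $\mapsto$ contact form is homogeneous of degree one, so it extends to a well-defined linear isomorphism of real line bundles $\mathcal{E}_{\mathbb{R}}(-1,-1)\cong HM^\perp$ that preserves positivity. The exponent bookkeeping here is the delicate point and is where I expect the main work to lie.

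\textbf{Step 3 (conclusion).} Negative densities are sent to $-\theta$, extending the map linearly. Dualizing the isomorphism of Step 2 gives the canonical isomorphism $\mathcal{E}_{\mathbb{R}}(1,1)\cong (HM^\perp)^*=TM/HM$. The construction involves no choices beyond the universal constant $c_n$, so the isomorphism is canonical.
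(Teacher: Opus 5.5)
Your proof is correct, but it takes a different route from the paper.

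\textbf{The paper's route.} The paper never fixes a contact form. It takes the determinant of the Levi bracket $T^{1,0}M\otimes T^{0,1}M\to TM/HM$, which by nondegeneracy gives $\wedge^nT^{1,0}M\otimes\wedge^nT^{0,1}M\cong(TM/HM)^{\otimes n}$. It then tensors with $TM/HM$ and dualizes to obtain $\mathcal{E}_{\mathbb{R}}(n+2,n+2)\cong(TM/HM)^{\otimes(n+2)}$. Only at that point does it use strict pseudoconvexity, to take the orientation-preserving $(n+2)$nd root. Because this is pointwise linear algebra, canonicity is automatic and no transformation law has to be checked. It also shows that nondegeneracy alone already identifies the $(n+2)$nd powers.

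\textbf{Your route.} Your Lee--Farris volume normalization is essentially a coordinate version of the same map: $\zeta=\theta\wedge(T\intprod\zeta)$, and $(d\theta)^n$ is the Levi determinant. The price is the scaling check $\hat\zeta=e^{(n+2)\Upsilon/2}\zeta$ for an arbitrary function $\Upsilon$. That check is exactly what makes $\abs{\zeta}^{2/(n+2)}$ at $p$ depend only on $\theta_p$, so that your map is a genuine bundle map. You handle it correctly, including the fact that the horizontal part of $\hat T$ drops out because $\zeta$ is divisible by $\theta$.

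\textbf{What your route gains.} You obtain directly the characterization \eqref{eq:volume-renormalization} of $h_\theta$, which the paper only asserts follows ``by the construction.'' With $c_n=\sqrt{-1}^{n^2}n!$, your isomorphism coincides with the paper's; your normalized $\zeta$ is the paper's $\zeta^{-n-2}$.

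\textbf{One point to make explicit in Step 1.} The ``correct'' $c_n$ must contain the factor $\sqrt{-1}^{n^2}$. For odd $n$, the form $\theta\wedge(T\intprod\zeta)\wedge(T\intprod\conj{\zeta})$ is an imaginary multiple of $\theta\wedge(d\theta)^n$, not a real one.
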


\begin{proof}
    First, we construct an isomorphism $\mathcal{E}_{\mathbb{R}}(-n-2,-n-2)\simeq(HM^\perp)^{\otimes(n+2)}$, for which only the nondegeneracy of the Levi form is required. 
    The Lie bracket followed by the natural projection defines the Levi bracket
    \begin{equation}
        T^{1,0}M\otimes T^{0,1}M\longrightarrow TM/HM.
    \end{equation}
    By nondegeneracy, taking its determinant gives an isomorphism
    \begin{equation}
        \wedge^nT^{1,0}M\otimes\wedge^nT^{0,1}M \longrightarrow (TM/HM)^{\otimes n}.
    \end{equation}
    After tensoring with $TM/HM$ and taking the appropriate duals, we obtain the desired isomorphism.

    Now assume that $M$ is strictly pseudoconvex. 
    The dual of the preceding isomorphism is $\mathcal{E}_{\mathbb{R}}(n+2,n+2) \simeq (TM/HM)^{\otimes(n+2)}$.
    Since both line bundles are naturally oriented, taking the unique orientation-preserving $(n+2)$nd root yields a canonical isomorphism $\mathcal{E}_{\mathbb{R}}(1,1)\simeq TM/HM$.
    This proves the lemma.
\end{proof}

As a corollary, if $M$ is strictly pseudoconvex, a choice of a contact form $\theta$ determines a metric $h_\theta \in \mathcal{E}_{\mathbb{R}}(-1,-1)$ on $\mathcal{E}(1,0)$.
By the construction of the isomorphism, it follows that a section $\zeta$ of $\mathcal{E}(1,0)$ has unit length with respect to $h_\theta$ if and only if 
\begin{equation}
    \theta \wedge (d\theta)^n = \sqrt{-1}^{n^2} n! \theta \wedge (T \intprod \zeta^{-n-2}) \wedge (T \intprod (\conj{\zeta})^{-n-2}),
    \label{eq:volume-renormalization}
\end{equation}
where $T$ is the unique vector field characterized by 
\begin{equation*}
    \theta (T) = 1, \quad T \intprod d\theta = 0.
\end{equation*}
This vector field is called the \emph{Reeb vector field} of the contact form $\theta$.

\subsection{The Tanaka--Webster connection}
Let $M$ be a strictly pseudoconvex CR manifold and fix a contact form $\theta$.
If $Z_\alpha$ is a local frame for $T^{1,0}M$, then $(T,Z_\alpha,Z_{\conj{\alpha}})$ is a local frame for $T^{\mathbb{C}}M$.
Let $\theta,\theta^\alpha,\theta^{\conj{\alpha}}$ be the dual coframe. 
By the integrability condition and the definition of $T$, we have 
\begin{equation}
    d\theta = \sqrt{-1} l_{\alpha \conj{\beta}} \theta^\alpha \wedge \theta^{\conj{\beta}}, \quad l_{\alpha \conj{\beta}} \coloneqq l_\theta (Z_\alpha,Z_{\conj{\beta}}).
    \label{eq:str.eq.forTW_1}
\end{equation}
If the frame $(Z_\alpha)$ is unitary with respect to the Levi form $l_\theta$, the coframe $\theta,\theta^\alpha,\theta^{\conj{\alpha}}$ is called an \emph{admissible coframe}.

The integrability condition also implies that there exist one-forms $\omega_{\beta}^{\,\,\,\alpha}$ and $\tau^\alpha = A^{\alpha}_{\,\,\,\conj{\beta}}\theta^{\conj{\beta}}$ satisfying 
\begin{equation}
    d\theta^\alpha = \theta^\beta \wedge \omega_{\beta}^{\,\,\,\alpha} + \theta \wedge \tau^\alpha,
    \label{eq:str.eq.forTW_2}
\end{equation}
which are uniquely determined by the condition 
\begin{equation*}
    \omega_{\alpha \conj{\beta}} + \omega_{\conj{\beta} \alpha} = 0,
\end{equation*}
where we raise and lower the indices using the metric $l_{\alpha \conj{\beta}}$.
The connection $\nabla^{TM}$ on $TM$ determined by the structure equations \eqref{eq:str.eq.forTW_1} and \eqref{eq:str.eq.forTW_2} is called the \emph{Tanaka--Webster connection} \cite{webster_pseudo-hermitian_1978}.
It induces connections on all density bundles $\mathcal{E}(w_1,w_2)$. 
For example, the connection form on $\mathcal{E}(-1,0)$ with respect to the frame $(\theta \wedge \theta^1 \wedge \cdots \wedge \theta^n)^{1/(n+2)}$ is given by $-\frac{1}{n+2}\omega_{\gamma}^{\,\,\,\gamma}$.

The second structure equation for the Tanaka--Webster connection reads  
\begin{align}
    d\omega_{\beta}^{\,\,\,\alpha} &= \omega_{\beta}^{\,\,\,\gamma} \wedge \omega_{\gamma}^{\,\,\,\alpha} + R_{\beta\,\,\, \rho \conj{\sigma}}^{\,\,\,\alpha} \theta^\rho \wedge \theta^{\conj{\sigma}}\\ &\quad+ A_{\beta \rho,}^{\quad\alpha} \theta^\rho \wedge \theta - A^{\alpha}_{\,\,\,\conj{\sigma},\beta} \theta^{\conj{\sigma}} \wedge \theta + \sqrt{-1} \theta_{\beta} \wedge \tau^\alpha - \sqrt{-1} \tau_\beta \wedge \theta^\alpha,
\end{align}
where the indices after the comma indicate the covariant derivatives with respect to $\nabla^{TM}$.
The tensor $R_{\beta\,\,\, \rho \conj{\sigma}}^{\,\,\,\alpha}$ is called the Tanaka--Webster curvature tensor.
Its Ricci and scalar curvatures are denoted by $R_{\alpha \conj{\beta}} = R_{\gamma\,\,\, \alpha \conj{\beta}}^{\,\,\,\gamma}$ and $R=R_{\gamma}^{\,\,\,\gamma}$, respectively.

\subsection{Groups appearing in CR Cartan geometry}
\label{subsec:Groups}
We collect some definitions and basic properties of the groups appearing in CR Cartan geometry.

Let $\mathbb{V}=(\mathbb{C}^{n+2},h)$ be a complex vector space equipped with a Lorentz--Hermitian metric 
\begin{equation}
    h = \begin{pmatrix}
     & & 1\\
     & \delta_{\alpha\conj{\beta}}& \\
    1& & 
    \end{pmatrix}.
\end{equation}
Let $P$ denote the subgroup of $\PSU(h)$ consisting of the elements that preserve the complex null line $\mathrm{span}\{\transpose{(1,0,\ldots,0)}\}$.
In matrix form, $P$ can be written as $P=P^\#/\mathbb{Z}_{n+2}$, where $P^\# \subset \SU(h)$ is the group of matrices of the form 
\begin{equation}
    \begin{pmatrix}
        c& -c u_{\beta}^{\,\,\,\gamma}v_\gamma& d\\
         & u_{\beta}^{\,\,\,\alpha}& v^\alpha\\
         & & \conj{c}^{-1}
    \end{pmatrix}, \quad u_\beta^{\,\,\,\alpha} \in \mathit{U}(n),\, c\conj{c}^{-1}\det u_\beta^{\,\,\,\alpha} =1,\,
    \left\lVert\begin{pmatrix}
        d\\ v^\alpha\\ \conj{c}^{-1}
    \end{pmatrix}\right\rVert_h =0.
    \label{eq:Psharp}
\end{equation}

Let $G_0^\#$ and $P^\#_+$ be the subgroups consisting of matrices of the following respective forms: 
\begin{equation*}
    G_0^\# = \left\{
    \begin{pmatrix}
        c& & \\
         & u_{\beta}^{\,\,\,\alpha}& \\
         & & \conj{c}^{-1}
    \end{pmatrix} \,\,\middle|\,\, u_\beta^{\,\,\,\alpha} \in \mathit{U}(n),\, c\conj{c}^{-1}\det u_\beta^{\,\,\,\alpha} =1
    \right\}
\end{equation*}
and 
\begin{equation*}
    P^\#_+ = \left\{
    \begin{pmatrix}
        1& -v_\beta& d\\
         & \delta_{\beta}^{\,\,\,\alpha}& v^\alpha\\
         & & 1
    \end{pmatrix} \,\,\middle|\,\, \left\lVert\begin{pmatrix}
        d\\ v^\alpha\\ 1
    \end{pmatrix}\right\rVert_h =0
    \right\}.
\end{equation*}
Then $P = G_0 \ltimes P_+$, where $G_0 = G_0^\#/\mathbb{Z}_{n+2}$ and $P_+ = P_+^\#/\mathbb{Z}_{n+2}$.
Note that $P_+^\#$ is the Heisenberg group and is therefore contractible.

The group $P$ admits the following embedding into $\GL(n+2,\mathbb{C})$: 
\begin{equation}
\scalebox{0.9}{$
    P=P^\#/\mathbb{Z}_{n+2} \ni \left[\begin{pmatrix}
        c& -c u_{\beta}^{\,\,\,\gamma}v_\gamma& d\\
         & u_{\beta}^{\,\,\,\alpha}& v^\alpha\\
         & & \conj{c}^{-1}
    \end{pmatrix}\right]
    \mapsto \begin{pmatrix}
        1& -u_{\beta}^{\,\,\,\gamma}v_\gamma& c^{-1}d\\
         & c^{-1} u_{\beta}^{\,\,\,\alpha}& c^{-1}v^\alpha\\
         & & \abs{c}^{-2}
    \end{pmatrix} \in \GL(n+2,\mathbb{C}).
$}
\end{equation}
We denote the image of this embedding by $Q$. 
Under this embedding, $G_0$ is mapped onto the conformal unitary group of matrices of the form 
\begin{equation}
    \begin{pmatrix}
        1& & \\
         & \lambda u_{\beta}^{\,\,\,\alpha}& \\
         & & \lambda^2
    \end{pmatrix}, \quad \lambda>0,\, u_\beta^{\,\,\,\alpha} \in \mathit{U}(n).
\end{equation}
Therefore, the group $Q$ deformation retracts onto its subgroup of matrices of the form 
\begin{equation}
    \begin{pmatrix}
        1& & \\
         & u_{\beta}^{\,\,\,\alpha}& \\
         & & 1
    \end{pmatrix}, \quad  u_\beta^{\,\,\,\alpha} \in \mathit{U}(n).
    \label{eq:unitary_group}
\end{equation}

\subsection{Cartan bundles and connections}
\label{subsec:Cartan_bundles}
A Cartan geometry on a manifold $M$ consists of a principal bundle $\mathcal{G}\longrightarrow M$, called the Cartan bundle, together with a one-form $\omega$ on $\mathcal{G}$, called the Cartan connection. 
This definition generalizes the homogeneous model in which $M=G/P,\, \mathcal{G}=G$, and $\omega$ is the Maurer--Cartan form. 
The curvature $K=d\omega+\frac{1}{2}[\omega,\omega]$ is the complete local obstruction to the Cartan geometry being locally isomorphic to its homogeneous model.

Cartan-geometric descriptions of CR geometry were given independently by Tanaka \cite{tanaka_graded_1966,tanaka_non-degenerate_1976} and Chern \cite{chern_real_1974}.
We follow Chern's construction, with modifications reflecting subsequent developments in parabolic Cartan geometry; see \cite{cap_parabolic_2009,matsumoto_cr_2022}.

Let $M$ be a strictly pseudoconvex CR manifold. 
A choice of a contact form $\theta$ determines the metric $h_\theta$ on $\mathcal{E}(1,0)$.
Let $\zeta$ be a section of $\mathcal{E}(-1,0)^\times$ that is of unit length with respect to $h_\theta^{-1}$.
Then, a local trivialization of $\mathcal{E}(-1,0)^\times$ is given by 
\begin{equation}
    M \times \mathbb{C}^\times \ni (x,c) \mapsto c\zeta_x \in \mathcal{E}(-1,0)^\times.
\end{equation}

Let $\theta,\theta^{\alpha},\theta^{\conj{\alpha}}$ be an admissible coframe, and consider the subbundle $\mathscr{T}^*\subset T^*_{\mathbb{C}}\mathcal{E}(-1,0)^\times$ locally spanned by covectors
\begin{equation}
    -\frac{1}{n+2}\omega_{\gamma}^{\,\,\,\gamma} +\frac{dc}{c} -\frac{\sqrt{-1}}{n+2}P\theta, \theta^\alpha, \sqrt{-1}\abs{c}^2\theta,
    \label{eq:unitary_coframe}
\end{equation}
where 
\begin{equation}
    P_{\alpha \conj{\beta}} = \frac{1}{n+2} \left(R_{\alpha \conj{\beta}} - \frac{1}{2(n+1)}Rl_{\alpha \conj{\beta}}\right), \quad P = P_{\gamma}^{\,\,\,\gamma}.
\end{equation}
Note that the one-form $-\omega_{\gamma}^{\,\,\,\gamma}/(n+2)+dc/c$ is the connection form of the Tanaka--Webster connection induced on $\mathcal{E}(-1,0)$, and is therefore well defined.
We introduce a metric $\bm{h}$ on $\mathscr{T}^*$ whose matrix representation with respect to the frame \eqref{eq:unitary_coframe} is given by 
\begin{equation*}
    \begin{pmatrix}
         & & 1\\
         & \abs{c}^2l_{\alpha\conj{\beta}}& \\
        1& & 
    \end{pmatrix}.
\end{equation*}
If $R_c\,(c\in\mathbb{C}^\times)$ denotes the principal right action on $\mathcal{E}(-1,0)^\times$, then the metric satisfies $R_c^* \bm{h} = \abs{c}^{2} \bm{h}$.

\begin{lem}
    The bundle $\mathscr{T}^*$ and the metric $\bm{h}$ are independent of the choice of $\theta$.
\end{lem}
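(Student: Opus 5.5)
Two things must be shown: the span $\mathscr{T}^*$ of the covectors \eqref{eq:unitary_coframe} does not depend on $\theta$ (or on the admissible coframe), and the matrix of $\bm{h}$ transforms correctly. I will first dispose of changes of admissible coframe for fixed $\theta$, then handle a conformal change $\hat\theta=e^{\Upsilon}\theta$ through the classical transformation laws of the Tanaka--Webster connection.

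For fixed $\theta$, the admissible coframes differ by $\theta^\alpha\mapsto u_\beta^{\,\,\,\alpha}\theta^\beta$ with $u$ unitary. Under such a change $\omega_\gamma^{\,\,\,\gamma}$ shifts by $\mathrm{tr}(u^{-1}du)$, and the unit section $\zeta$ of $\mathcal{E}(-1,0)$ shifts by a $U(1)$-phase correlated with $\det u$. Hence the fibre coordinate $c$ changes so that $dc/c-\omega_\gamma^{\,\,\,\gamma}/(n+2)$ is unchanged; this is simply the statement, already noted in the text, that this form is the intrinsic connection form on $\mathcal{E}(-1,0)^\times$. The term $P\theta$ is unchanged, the span of the $\theta^\alpha$ is unchanged, and $\abs{c}^2 l_{\alpha\conj\beta}$ transforms as a Hermitian form. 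So both $\mathscr{T}^*$ and $\bm h$ are invariant.

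Now let $\hat\theta=e^{\Upsilon}\theta$. By \eqref{eq:volume-renormalization}, $h_{\hat\theta}=e^{-\Upsilon}h_\theta$, so the unit section becomes $\hat\zeta=e^{\Upsilon/2}\zeta$ up to phase, and $\hat c=e^{-\Upsilon/2}c$. I take the admissible coframe $\hat\theta^\alpha=e^{\Upsilon/2}(\theta^\alpha+\sqrt{-1}\Upsilon^\alpha\theta)$, which makes $\hat l_{\alpha\conj\beta}=\delta_{\alpha\conj\beta}$ after the rescaling. I then insert Lee's transformation formulas for $\hat\omega_\gamma^{\,\,\,\gamma}$ and for $\hat P$, namely
\[
\hat P = e^{-\Upsilon}\Bigl(P-\tfrac12(\Upsilon_\gamma{}^{\gamma}+\Upsilon^{\gamma}{}_{\gamma})-\tfrac{n+2}{2}\Upsilon_\gamma\Upsilon^\gamma\Bigr)
\]
with suitable normalizations. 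With these, each new covector will be a combination of the old ones:
\begin{align}
\sqrt{-1}\abs{\hat c}^2\hat\theta &= \sqrt{-1}\abs{c}^2\theta,\\
\hat c\,\hat\theta^\alpha &= c\bigl(\theta^\alpha+\sqrt{-1}\Upsilon^\alpha\theta\bigr).
\end{align}
Here the factor of $c$ suggests that the natural frame should really be $c\,\theta^\alpha$, consistent with the metric entry $\abs{c}^2 l_{\alpha\conj\beta}$. The first covector changes by a combination $a_\alpha\theta^\alpha+b\,\theta$ whose coefficients are built from $\Upsilon_\alpha$ and $\abs{\Upsilon}^2$. The full change-of-frame matrix should therefore be lower triangular, of the form in $P_+^\#$ (transposed, acting on coframes), with $v^\alpha\sim\Upsilon^\alpha$. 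This proves that the span is preserved. Metric invariance then reduces to the fact that $P_+^\#$ preserves $h$, which in turn is the null condition $d+\conj d+\abs{v}^2=0$. This identifies the real part of $d$ with $-\abs{\Upsilon}^2/2$. The imaginary part of $d$ is harmless and comes from $\Upsilon_0$ terms.

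The main obstacle is the $\theta$-coefficient of the transformed first covector. There one must check that the contributions of $\Upsilon_0$, $\Upsilon_{\gamma}{}^{\gamma}$ and $\abs{\Upsilon}^2$ coming from $\hat\omega_\gamma^{\,\,\,\gamma}$, $d\hat c/\hat c=dc/c-d\Upsilon/2$ and $\hat P\hat\theta$ combine to have exactly the real part forced by the null condition. The trace-Laplacian terms must cancel between $\hat\omega_\gamma^{\,\,\,\gamma}$ and $\hat P$, which is precisely why the coefficient $-\sqrt{-1}P\theta/(n+2)$ appears. I would verify this cancellation first, tracking only real parts, and leave the rest as routine bookkeeping.
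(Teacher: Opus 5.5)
Your route is the same as the paper's: rewrite the new covectors via Lee's transformation laws, then recognize a unipotent change of frame of $P_+^\#$ type that preserves $\bm{h}$. Your extra $\mathit{U}(n)$ step is fine. Your idea of using $c\,\theta^\alpha$ is a good one: with unitary coframes on both sides, $\widehat c\,\widehat\theta^\alpha=c\theta^\alpha+\conj{c}^{-1}\Upsilon^\alpha\cdot\sqrt{-1}\abs{c}^2\theta$, so the matrix lies in $P_+^\#$ for the standard $h$. The paper instead keeps the unrescaled coframe $\widehat\theta^\alpha=\theta^\alpha+\sqrt{-1}\Upsilon^\alpha\theta$, for which $\abs{\widehat c}^2\widehat l_{\alpha\conj\beta}=\abs{c}^2 l_{\alpha\conj\beta}$.

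The gap is that the proof \emph{is} the computation you defer, and the one input you state for it is wrong. The correct law is $\widehat P\,\widehat\theta=\bigl(P-\frac12(\Upsilon_{\gamma}^{\,\,\,\gamma}+\Upsilon^{\gamma}_{\,\,\,\gamma})-\frac n2\Upsilon_\gamma\Upsilon^\gamma\bigr)\theta$. This follows from the CR Yamabe law for $R$, since $P=R/(2(n+1))$. The trace of Lee's law for the connection has $\theta$-coefficient $\sqrt{-1}\bigl(\frac12(\Upsilon_{\gamma}^{\,\,\,\gamma}+\Upsilon^{\gamma}_{\,\,\,\gamma})+(n+1)\Upsilon_\gamma\Upsilon^\gamma\bigr)$. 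With these inputs the Laplacian terms cancel as you predict. The $\Upsilon_\gamma\Upsilon^\gamma\theta$-terms of $\widehat Y$ sum to $-\frac{\sqrt{-1}}{n+2}\bigl((n+1)-\frac n2\bigr)=-\frac{\sqrt{-1}}{2}$, so $\mathrm{Re}\,d=-\frac12\abs{c}^{-2}\Upsilon_\gamma\Upsilon^\gamma$, exactly as the null condition demands. With your coefficient $\frac{n+2}{2}$ you would instead get $-\frac{\sqrt{-1}\,n}{2(n+2)}$, and the very check you single out as the main obstacle would fail.

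Two further points must be verified rather than asserted.

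\begin{enumerate}
\item For $\widehat Y$ to lie in $\mathscr{T}^*$ at all, the $\theta^{\conj\gamma}$-terms must cancel. Here $-\frac{1}{n+2}\widehat\omega_{\gamma}^{\,\,\,\gamma}$ contributes $+\frac12\Upsilon^\gamma\theta_\gamma$, and $d\widehat c/\widehat c=dc/c-\frac12d\Upsilon$ contributes $-\frac12\Upsilon^\gamma\theta_\gamma$. This cancellation is the actual content of the span claim.
\item The null condition alone does not give metric invariance. You also need the $(1,0)$-coefficient of $\widehat Y$ to equal $-v_\beta$, with $v^\alpha=\conj{c}^{-1}\Upsilon^\alpha$. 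The same two sources make that coefficient $-\Upsilon_\beta$ along $\theta^\beta$, i.e.\ $-c^{-1}\Upsilon_\beta$ along $c\,\theta^\beta$.
\end{enumerate}

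Finally, a small slip: $h_{\widehat\theta}=e^{\Upsilon}h_\theta$ on $\mathcal{E}(1,0)$, not $e^{-\Upsilon}h_\theta$. Your conclusion $\widehat c=e^{-\Upsilon/2}c$ is nevertheless correct.
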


\begin{proof}
    Let $\widehat{\theta} = e^\Upsilon \theta$ be another contact form, where $\Upsilon$ is a smooth real function.
    An admissible coframe for $\widehat{\theta}$ is obtained by setting $\widehat{\theta}^{\alpha} =\theta^\alpha + \sqrt{-1}\Upsilon^{\alpha}\theta$.
    The corresponding fiber coordinate is given by $\widehat{c} = e^{-\Upsilon/2}c$.
    Then 
    \begin{equation*}
        \frac{d\widehat{c}}{\widehat{c}} = -\frac{1}{2}\left(\Upsilon_\gamma \theta^\gamma + \Upsilon^{\gamma} \theta_{\gamma} + \Upsilon_{T}\theta\right) + \frac{dc}{c},
    \end{equation*}
    where $\Upsilon_{T}=\nabla^{TM}_{T}\Upsilon$.
    The Tanaka--Webster connection associated with $\widehat{\theta}$ satisfies 
    \begin{align}
        \widehat{\omega}_{\gamma}^{\,\,\,\gamma} &= \omega_{\gamma}^{\,\,\,\gamma} + \frac{n+2}{2}(\Upsilon_\gamma \theta^\gamma - \Upsilon^{\gamma} \theta_{\gamma}) \\ &\qquad+ \sqrt{-1} \left(\frac{1}{2} (\Upsilon_{\gamma}^{\,\,\,\gamma} + \Upsilon^{\gamma}_{\,\,\,\gamma}) + (n+1)\Upsilon_{\gamma}\Upsilon^{\gamma}\right)\theta
    \end{align}
    and 
    \begin{equation*}
        \widehat{P} = P - \frac{1}{2} (\Upsilon_{\gamma}^{\,\,\,\gamma} + \Upsilon^{\gamma}_{\,\,\,\gamma}) - \frac{n}{2} \Upsilon_{\gamma}\Upsilon^{\gamma}.
    \end{equation*}
    See \cite{lee_fefferman_1986} for these transformation rules.
    Using these formulas, we obtain
    \begin{equation}
    \begin{aligned}
        &\begin{pmatrix}
            -\frac{1}{n+2} \widehat{\omega}_{\gamma}^{\,\,\,\gamma} +\frac{d\widehat{c}}{\widehat{c}} -\frac{\sqrt{-1}}{n+2} \widehat{P} \widehat{\theta}\\
            \widehat{\theta}^\alpha\\
            \sqrt{-1}\abs{\widehat{c}}^2\widehat{\theta}
        \end{pmatrix}\\
        &=\begin{pmatrix}
            1& -\Upsilon_\beta& -\frac{\abs{c}^{-2}}{2} (\Upsilon_\gamma \Upsilon^\gamma - \sqrt{-1} \Upsilon_{T})\\
             & \delta_{\beta}^{\,\,\,\alpha}&  \abs{c}^{-2}\Upsilon^{\alpha}\\
             & & 1
        \end{pmatrix}
        \begin{pmatrix}
            -\frac{1}{n+2}\omega_{\gamma}^{\,\,\,\gamma} +\frac{dc}{c} -\frac{\sqrt{-1}}{n+2}P\theta\\
            \theta^\alpha\\
            \sqrt{-1}\abs{c}^2\theta
        \end{pmatrix}.
        \label{eq:transformation_formula}
    \end{aligned}
    \end{equation}
    The transformation matrix preserves $\bm{h}$, so the lemma follows.
\end{proof}

The dual bundle $\mathscr{T}$ of $\mathscr{T}^*$ can be realized as a quotient of $T^{\mathbb{C}}\mathcal{E}(-1,0)^\times$.
It inherits a metric from $\bm{h}$, which we again denote by $\bm{h}$. 
Let $\mathcal{G}$ be the bundle of frames $(Z_0,Z_\alpha,Z_{n+1})$ of $\mathscr{T}$ satisfying 
\begin{equation}
\begin{gathered}
    (Z_0,Z_0)_{\bm{h}} = (Z_0,Z_\beta)_{\bm{h}} = (Z_\alpha,Z_{n+1})_{\bm{h}} = (Z_{n+1},Z_{n+1})_{\bm{h}} = 0,\\
    (Z_0,Z_{n+1})_{\bm{h}} = 1, (Z_\alpha,Z_\beta)_{\bm{h}} = \delta_{\alpha \conj{\beta}},\\
    Z_{0} = \left[c\frac{\partial}{\partial c}\right].
\end{gathered}
\end{equation}
Let $\mathcal{T}\to M$ be the bundle of sections of $\mathscr{T}$ that are invariant under the principal right action.
The ambiguity in choosing an $(n+2)$nd root $\mathcal{E}(-1,0)$ of $\mathcal{K}_M$ is given by the action of the group of $(n+2)$nd roots of unity. 
Since the sections defining $\mathcal{T}$ are invariant under this action, $\mathcal{T}$ is globally defined over $M$. 
The bundle $\mathcal{G}$ may then be regarded as a frame bundle of $\mathcal{T}$;
it is a principal $Q$-bundle over $M$. 
This principal bundle is called the \emph{CR Cartan bundle} of $M$.

In \cite{chern_real_1974}, Chern constructed a family of one-forms on $\mathcal{G}$ satisfying a structure equation with respect to the coframe \eqref{eq:unitary_coframe}.
These forms do not define a connection on $\mathcal{T}$, since they are not equivariant under the action of $Q$.
Instead, they define a connection on $\mathcal{T}^\#=\mathcal{E}(-1,0)\otimes\mathcal{T}$.
The bundle $\mathcal{T}^\#$ is called the \emph{standard tractor bundle}.
The induced connection is called the \emph{normal tractor connection} and is denoted by $\nabla^{\mathcal{T}^\#}$.

The bundle $\mathcal{T}^\#$ is identified with the bundle of sections of $\mathscr{T}$ that are homogeneous of degree $(-1,0)$, that is, sections $\bm{v}$ satisfying $(R_c)_*\bm{v}=c\bm{v}$.
To a frame $(Z_A)\in\mathcal{G}$ of $\mathcal{T}$, one associates a frame of $\mathcal{T}^\#$ as follows.
Let $(\theta^A)$ be the dual coframe.
The covectors $\theta^1,\ldots,\theta^{n+1}$ are basic, since they annihilate the infinitesimal generator $Z_0$ of the principal $\mathbb{C}^\times$-action.
Let $\zeta$ be a section of $\mathcal{E}(-1,0)^\times$ satisfying
\begin{equation}
    \zeta^{n+2}=\theta^1\wedge\cdots\wedge\theta^{n+1},
\end{equation}
and let $c$ be the corresponding fiber coordinate. 
Then $(c^{-1}Z_A)$ is a frame of $\mathcal{T}^\#$.
The bundle $\mathcal{G}^\#$ of such frames of $\mathcal{T}^\#$ is a principal $P^\#$-bundle over $M$. 
It is also an $(n+2)$-fold covering of $\mathcal{G}$, corresponding to the $n+2$ possible choices of $\zeta$. 
From this point of view, $\mathcal{G}$ may be regarded as a principal $P$-bundle: a point of $\mathcal{G}$ represents the collection of the $n+2$ frames of $\mathcal{T}^\#$ arising from the possible choices of $\zeta$.

\begin{rem}
    A choice of a contact form $\theta$ determines a frame of $\mathscr{T}^*$ of the form \eqref{eq:unitary_coframe}. 
    We may regard it as a weighted frame of $(\mathcal{T}^\#)^*$ by setting 
    \begin{align}
        Y_A &= -\frac{1}{n+2}\omega_{\gamma}^{\,\,\,\gamma} +\frac{dc}{c} -\frac{\sqrt{-1}}{n+2}P\theta \in (\mathcal{T}^\#)^*\otimes\mathcal{E}(-1,0),\\
        W_A^{\,\,\,\alpha} &= \theta^\alpha \in (\mathcal{T}^\#)^*\otimes\mathcal{E}(-1,0),\\
        Z_A &= \sqrt{-1}\abs{c}^2\theta \in (\mathcal{T}^\#)^*\otimes\mathcal{E}(0,1).
    \end{align}
    Under a change $\widehat{\theta} = e^{\Upsilon}\theta$ of contact form, the same transformation formula as in \eqref{eq:transformation_formula} holds for the weighted frame $(Y_A,W_A^{\,\,\,\alpha},Z_A)$.
    This recovers the definition of the CR standard co-tractor bundle in \cite{gover_cr_2005}.
\end{rem}

\subsection{Exact Weyl structures}
\label{subsec:Exact}
As explained in Subsection \ref{subsec:Groups}, the structure group $Q$ of $\mathcal{G}$ deformation retracts onto the unitary group of matrices of the form \eqref{eq:unitary_group}, which we also denote by $\mathit{U}(n)$.
A choice of a contact form $\theta$ determines a reduction of the structure group to $\mathit{U}(n)$, which we explain below. 
This reduction can be extended to the group $G_0$, and such an extension is referred to as an \emph{exact Weyl structure} \cite{cap_parabolic_2009,matsumoto_cr_2022}.

Once a contact form $\theta$ is fixed, the unitary admissible coframes $\theta,\theta^\alpha,\theta^{\conj{\alpha}}$ are parametrized by the action of $\mathit{U}(n)$.
We denote the bundle of corresponding unitary frames $(Z_\alpha)$ of $T^{1,0}M$ by $\mathcal{F}_{\theta}(T^{1,0}M)$.
Taking the frame dual to  
\begin{equation}
    -\frac{1}{n+2}\omega_{\gamma}^{\,\,\,\gamma} +\frac{dc}{c} -\frac{\sqrt{-1}}{n+2}P\theta, \theta^\alpha, \sqrt{-1}\theta,
\end{equation}
we obtain a reduction $\sigma_\theta \colon \mathcal{F}_{\theta}(T^{1,0}M) \to \mathcal{G}$ of structure group to $\mathit{U}(n)$.
As a corollary, the bundle-theoretic classification of the Cartan bundle $\mathcal{G}$ is equivalent to that of the CR holomorphic tangent bundle $T^{1,0}M$.
In particular, $\mathcal{G}$ is trivial if and only if $T^{1,0}M$ is trivial.

Now let $\bm{Z}=(Z_\alpha)$ be a section of $\mathcal{F}_{\theta}(T^{1,0}M)$. 
In \cite{matsumoto_cr_2022}, Matsumoto computed the connection form of the normal tractor connection $\nabla^{\mathcal{T}^\#}$ with respect to any of the $n+2$ frames of $\mathcal{T}^\#$ represented by $\sigma_\theta(\bm{Z})$.
The result is 
\begin{equation}
\scalebox{0.8}{$
    \begin{pmatrix}
        -\frac{1}{n+2}\omega_{\gamma}^{\,\,\,\gamma}-\frac{\sqrt{-1}}{n+2}P\theta& -\sqrt{-1}A_{\beta\rho}\theta^{\rho}-P_{\beta\conj{\sigma}}\theta^{\conj{\sigma}}-2\sqrt{-1}T_{\beta}\theta& T_{\rho}\theta^{\rho}+T_{\conj{\sigma}}\theta^{\conj{\sigma}}-\sqrt{-1}S\theta\\
        \theta^\alpha& \omega_{\beta}^{\,\,\,\alpha}-\frac{1}{n+2}\omega_{\gamma}^{\,\,\,\gamma}\delta_{\beta}^{\,\,\,\alpha}+\sqrt{-1}\tf P_{\beta}^{\,\,\,\alpha}\theta& P_{\,\,\,\rho}^{\alpha}\theta^{\rho}-\sqrt{-1}A_{\,\,\,\conj{\sigma}}^{\alpha}\theta^{\conj{\sigma}}-2\sqrt{-1}T^{\alpha}\theta\\
        \sqrt{-1}\theta& -\theta_\beta&  -\frac{1}{n+2}\omega_{\gamma}^{\,\,\,\gamma}-\frac{\sqrt{-1}}{n+2}P\theta
    \end{pmatrix},
    \label{eq:exact_Weyl}
$}
\end{equation}
where 
\begin{align}
    \tf P_{\beta}^{\,\,\,\alpha} &= P_{\beta}^{\,\,\,\alpha} - \frac{1}{n+2}P\delta_{\beta}^{\,\,\,\alpha}\\
    T_\alpha &= \frac{1}{n+2}(P_{,\alpha}-\sqrt{-1}A_{\alpha\gamma,}^{\quad\gamma}),\\
    S &= -\frac{1}{n}(T_{\gamma,}^{\,\,\,\gamma}+T_{\conj{\gamma},}^{\,\,\,\conj{\gamma}}+P_{\alpha\conj{\beta}}P^{\alpha\conj{\beta}}-A_{\alpha\beta}A^{\alpha\beta}).
\end{align}

\subsection{Pseudo-Einstein structures}
Let $M$ be a strictly pseudoconvex CR manifold and fix a contact form $\theta$.
As explained in Subsection \ref{subsec:Basic}, the contact form $\theta$ determines a metric $h_\theta \in \mathcal{E}_{\mathbb{R}}(-1,-1)$ on $\mathcal{E}(1,0)$.
Define a metric connection $D^\theta$ on $\mathcal{E}(1,0)$ by 
\begin{equation}
    D^\theta \zeta = \nabla^{TM} \zeta -\frac{\sqrt{-1}}{n(n+2)}R\theta \otimes \zeta.
    \label{eq:TanakaConn}
\end{equation}

\begin{thm}[\cite{hirachi_variation_2017}]
\label{thm:pseudo_Einstein}
The following conditions on $\theta$ are equivalent: 
\begin{enumerate}
    \item The connection $D^\theta$ is flat.
    \item Around each point $x\in M$, there exists a closed local section of $\mathcal{K}_M$ having unit length with respect to $h_\theta^{-n-2}$.
    \item The Tanaka--Webster connection satisfies \begin{equation}
        \begin{cases}
            R_{\alpha \conj{\beta}} = \frac{1}{n}Rl_{\alpha \conj{\beta}} & n>1, \\
            \sqrt{-1}A_{\alpha \gamma,}^{\quad\gamma} = \frac{1}{n}R_{,\alpha} & n=1. 
        \end{cases}
        \label{eq:pseudo_Einstein}
    \end{equation}
\end{enumerate}
\end{thm}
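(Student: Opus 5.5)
The plan is to compute the curvature of $D^\theta$ directly in a local frame and compare the resulting conditions. Fix an admissible coframe $\theta,\theta^\alpha,\theta^{\conj{\alpha}}$. Take the local frame $\zeta_0=(\theta\wedge\theta^1\wedge\cdots\wedge\theta^n)^{-1/(n+2)}$ of $\mathcal{E}(1,0)$. As recorded in the previous subsection, the Tanaka--Webster connection form on $\mathcal{E}(-1,0)$ is $-\frac{1}{n+2}\omega_\gamma^{\,\,\,\gamma}$. Hence on $\mathcal{E}(1,0)$ the connection form of $D^\theta$ is
\begin{equation}
\varphi=\frac{1}{n+2}\omega_\gamma^{\,\,\,\gamma}-\frac{\sqrt{-1}}{n(n+2)}R\theta .
\end{equation}
Note that $|\zeta_0|_{h_\theta}$ is constant by \eqref{eq:volume-renormalization}, so $D^\theta$ is indeed metric. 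The curvature is therefore $d\varphi$.

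To compute $d\varphi$, we trace the second structure equation. The quadratic term $\omega_\beta^{\,\,\,\gamma}\wedge\omega_\gamma^{\,\,\,\beta}$ vanishes. The torsion terms $\sqrt{-1}(\theta_\beta\wedge\tau^\beta-\tau_\beta\wedge\theta^\beta)$ also cancel, because $A_{\alpha\beta}$ is symmetric. This gives
\begin{equation}
d\omega_\gamma^{\,\,\,\gamma}=R_{\rho\conj{\sigma}}\theta^\rho\wedge\theta^{\conj{\sigma}}+A_{\gamma\rho,}^{\quad\gamma}\theta^\rho\wedge\theta-A^{\gamma}_{\,\,\,\conj{\sigma},\gamma}\theta^{\conj{\sigma}}\wedge\theta .
\end{equation}
Together with $d\theta=\sqrt{-1}l_{\rho\conj{\sigma}}\theta^\rho\wedge\theta^{\conj{\sigma}}$ and $dR=R_{,\rho}\theta^\rho+R_{,\conj{\sigma}}\theta^{\conj{\sigma}}+R_{,0}\theta$, the curvature becomes
\begin{equation}
(n+2)\,d\varphi=\Bigl(R_{\rho\conj{\sigma}}-\tfrac1n Rl_{\rho\conj{\sigma}}\Bigr)\theta^\rho\wedge\theta^{\conj{\sigma}}+\Bigl(A_{\gamma\rho,}^{\quad\gamma}+\tfrac{\sqrt{-1}}{n}R_{,\rho}\Bigr)\theta^\rho\wedge\theta+\text{conj.}
\end{equation}

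The equivalence (1)$\Leftrightarrow$(3) now follows from this formula. Flatness means that the $(1,1)$ part, which is the trace-free Ricci tensor, vanishes, and that $\sqrt{-1}A_{\alpha\gamma,}^{\quad\gamma}=\frac1n R_{,\alpha}$. For $n=1$ the trace-free Ricci part is automatically zero, so only the torsion condition remains, matching \eqref{eq:pseudo_Einstein}.

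For $n>1$ we must show that the torsion condition follows from the vanishing of the trace-free Ricci tensor. Here I would use the contracted Bianchi identity for the Tanaka--Webster connection (Lee's identity)
\begin{equation}
R_{\alpha\conj{\beta},}^{\quad\,\conj{\beta}}=R_{,\alpha}-\sqrt{-1}(n-1)A_{\alpha\gamma,}^{\quad\gamma}.
\end{equation}
Substituting $R_{\alpha\conj{\beta}}=\frac1nRl_{\alpha\conj{\beta}}$ gives $\frac{n-1}{n}R_{,\alpha}=\sqrt{-1}(n-1)A_{\alpha\gamma,}^{\quad\gamma}$, which is the needed identity. Alternatively, one can note that $d(d\varphi)=0$ forces the $\theta$-components once the $(1,1)$ part vanishes and $n\ge2$.

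Verifying the exact constants in Lee's identity is the step I expect to be the main obstacle. I would check them directly by using $d^2\varphi=0$ instead of quoting the identity.

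For (1)$\Leftrightarrow$(2): $D^\theta$ is metric, so it induces a flat metric connection on $\mathcal{K}_M=\mathcal{E}(-n-2,0)$ whenever $D^\theta$ is flat. A local parallel section of unit length then exists.

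Next I compute the induced connection on $\mathcal{K}_M$ directly. For a section $\kappa=f\,\theta\wedge\theta^1\wedge\cdots\wedge\theta^n$, I expect $d\kappa=\theta\wedge(\cdots)+\ldots$. The identification I aim for is that, for sections of $\mathcal{K}_M$, the equation $d\kappa=0$ is equivalent to $D^\theta\kappa=0$ up to the $\theta$-component, after accounting for the normalization. Concretely, I would show $d\kappa=(D^\theta\kappa)\wedge$-type terms, so that:
\begin{enumerate}
\item a unit-length closed section is parallel, giving (2)$\Rightarrow$(1) since local parallel frames force flatness;
\item a unit-length parallel section is closed, giving (1)$\Rightarrow$(2).
\end{enumerate}
This requires checking that the $-\frac{\sqrt{-1}}{n(n+2)}R\theta$ correction is exactly what makes the $\theta$-component of $d\kappa$ match; this is again a direct check using the first structure equation.
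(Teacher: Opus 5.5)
The paper does not prove this statement itself; it quotes it from \cite{hirachi_variation_2017}. So I judge your argument on its own terms. The direction (1)$\Rightarrow$(2) is fine, and (1)$\Leftrightarrow$(3) goes through once a sign is fixed (second paragraph). The direction (2)$\Rightarrow$(1) has a genuine gap.

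\textbf{The gap in (2)$\Rightarrow$(1).} Every section $\kappa$ of $\mathcal{K}_M$ contains the factor $\theta$. Hence if $\nabla^{TM}\kappa=\beta\otimes\kappa$, then $d\kappa=\beta\wedge\kappa$, and only the $\theta^{\conj{\alpha}}$-components of $\beta$ survive. Replacing $\nabla^{TM}$ by $D^\theta$ changes nothing here.

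\begin{itemize}
\item Closedness says exactly that the $(0,1)$-part of $D^\theta\kappa$ vanishes.
\item Unit length, together with $D^\theta$ being metric, then kills the $(1,0)$-part.
\item What remains is $D^\theta\kappa=\psi\,\theta\otimes\kappa$ for an arbitrary purely imaginary function $\psi$.
\end{itemize}

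The form $d\kappa$ carries no information about $\psi$. There is no ``$\theta$-component of $d\kappa$'' to match, and no first-order identity from the first structure equation can determine $\psi$.

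The missing step is second order. Compute the curvature of $D^\theta$ on $\mathcal{K}_M$ in the frame $\kappa$: it is $d(\psi\theta)=d\psi\wedge\theta+\sqrt{-1}\psi\, l_{\rho\conj{\sigma}}\theta^\rho\wedge\theta^{\conj{\sigma}}$, whose $(1,1)$-part is pure trace. By your own curvature formula, that $(1,1)$-part equals $-(R_{\rho\conj{\sigma}}-\frac1nRl_{\rho\conj{\sigma}})\theta^\rho\wedge\theta^{\conj{\sigma}}$, which is trace-free. Taking the trace gives $\psi=0$, so $\kappa$ is parallel and $D^\theta$ is flat. This works for $n=1$ too. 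This is exactly where the coefficient of the $R\theta$-correction matters, and it uses the traced second structure equation, not the first.

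\textbf{The sign slip.} Take $\varphi=\frac{1}{n+2}\omega_{\gamma}^{\,\,\,\gamma}-\frac{\sqrt{-1}}{n(n+2)}R\theta$ exactly as you wrote it, following \eqref{eq:TanakaConn}. Then $-\frac{\sqrt{-1}}{n}R\,d\theta$ contributes $+\frac1nRl_{\rho\conj{\sigma}}$. So the $(1,1)$-part of $(n+2)d\varphi$ is $R_{\rho\conj{\sigma}}+\frac1nRl_{\rho\conj{\sigma}}$, with trace $2R$, which is nonzero on the standard sphere. The $\theta^\rho\wedge\theta$ coefficient becomes $A_{\gamma\rho,}^{\quad\gamma}-\frac{\sqrt{-1}}{n}R_{,\rho}$.

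The formula you display is correct for $\varphi=\frac{1}{n+2}\bigl(\omega_{\gamma}^{\,\,\,\gamma}+\frac{\sqrt{-1}}{n}R\theta\bigr)$. This is also the sign the rest of the paper is consistent with: see the remark following Theorem \ref{thm:reduction_theorem}, and the vanishing $(0,0)$-entry of \eqref{eq:exact_Weyl+pseudo_Einstein}. So on $\mathcal{E}(1,0)$, \eqref{eq:TanakaConn} must be read with a plus sign.

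With that fixed, (1)$\Leftrightarrow$(3) goes through. Your $d^2\varphi=0$ alternative for $n\ge2$ is sound and spares you checking the constants in Lee's identity. Write $X_\rho$ for the $\theta^\rho\wedge\theta$ coefficient of $(n+2)d\varphi$. Once the $(1,1)$-part vanishes, the $\theta$-free part of $d(d\varphi)$ is $-\frac{\sqrt{-1}}{n+2}X_\rho\,\theta^\rho\wedge l_{\alpha\conj{\beta}}\theta^\alpha\wedge\theta^{\conj{\beta}}$ plus a term of type $(1,2)$. For $n\ge2$ this forces $X_\rho=0$.
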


If a contact form $\theta$ satisfies one of the above equivalent conditions, it is called a \emph{pseudo-Einstein contact form} \cite{lee_pseudo-einstein_1988}.
In that case, both formulas in \eqref{eq:pseudo_Einstein} are valid in every dimension.
For example, the boundary of a bounded strictly pseudoconvex domain in $\mathbb{C}^{n+1}$ admits a pseudo-Einstein contact form.

\section{Secondary Invariants of the Cartan Connection}
In this section, we construct global $\mathbb{R}/\mathbb{Z}$-valued CR invariants by applying Cheeger--Simons theory \cite{chern_characteristic_1974,cheeger_differential_1985} to a globally defined modification of the normal tractor connection.
This construction unifies and extends the Burns--Epstein invariants \cite{burns_global_1988,burns_characteristic_1990} and Marugame's invariant \cite{marugame_renormalized_2016} by realizing them as $\mathbb{R}$-valued lifts of differential characters.

\subsection{Chern--Simons forms}
Let $E\to M$ be a complex vector bundle of rank $k$ over a manifold $M$.
Let $\Phi \in I^l(\mathfrak{gl}(k,\mathbb{C}))$ be an $\Ad$-invariant polynomial of degree $l$.
For a connection $\nabla$ on $E$, we denote by $\Phi(\nabla)$ the corresponding characteristic form.

Suppose we have two connections $\nabla^{(0)},\nabla^{(1)}$ on $E$.
Let $\omega^{(t)}$ and $\Omega^{(t)}$ denote the connection and curvature forms of $\nabla^{(t)} = (1-t)\nabla^{(0)} + t\nabla^{(1)}$, respectively, with respect to a fixed local frame.
Denote by $\widetilde{\Phi}\colon \mathfrak{gl}(k,\mathbb{C})^{\otimes l} \to \mathbb{C}$ the polarization of $\Phi$, normalized by 
\begin{equation}
    \widetilde{\Phi}(A,\ldots,A) = \Phi(A).
\end{equation}
The \emph{relative Chern--Simons form} is then defined by  
\begin{equation}
    \Phi(\nabla^{(0)},\nabla^{(1)}) = l\int_0^1 \widetilde{\Phi}(\omega^{(1)}-\omega^{(0)},\Omega^{(t)},\ldots,\Omega^{(t)}) dt.
    \label{eq:relativeCS}
\end{equation}
The defining property of the relative Chern--Simons form is that it satisfies 
\begin{equation}
    d\Phi(\nabla^{(0)},\nabla^{(1)}) = \Phi(\nabla^{(1)})-\Phi(\nabla^{(0)}).
\end{equation}
From \eqref{eq:relativeCS}, one computes 
\begin{equation}
\label{eq:productCS}
\scalebox{0.95}{$
\begin{aligned}
    &(\Phi\cdot\Psi)(\nabla^{(0)},\nabla^{(1)})-\Phi(\nabla^{(0)})\wedge\Psi(\nabla^{(0)},\nabla^{(1)})-\Phi(\nabla^{(0)},\nabla^{(1)})\wedge\Psi(\nabla^{(1)})\\ &= lm\cdot d\int_{0}^{1}dt\int_{0}^{t}ds \widetilde{\Phi}(\omega^{(1)}-\omega^{(0)},\Omega^{(s)},\ldots,\Omega^{(s)})\wedge\widetilde{\Psi}(\omega^{(1)}-\omega^{(0)},\Omega^{(t)},\ldots,\Omega^{(t)})
\end{aligned}
$}
\end{equation}
for invariant polynomials $\Phi$ and $\Psi$ of degree $l$ and $m$, respectively.
Moreover, for another connection $\nabla^{(2)}$ on $E$, one has 
\begin{equation}
\label{eq:difference_form}
    \Phi(\nabla^{(0)},\nabla^{(1)}) + \Phi(\nabla^{(1)},\nabla^{(2)}) = \Phi(\nabla^{(0)},\nabla^{(2)}) + \text{exact form}.
\end{equation}

Let $\pi \colon \mathcal{F}(E) \to M$ be the full frame bundle of $E$, and denote by $\bm{s}_{\text{taut}}$ its tautological frame.
Let $\nabla^{\bm{s}_{\text{taut}}}$ be the trivial connection on $\pi^*E$ with respect to the tautological frame.
For a connection $\nabla$ on $E$, the relative Chern--Simons form 
\begin{equation}
    T_\Phi(\nabla) = \Phi(\nabla^{\bm{s}_{\text{taut}}},\pi^*\nabla)
\end{equation}
is called the \emph{Chern--Simons form}.
This is a $(2l-1)$-form on $\mathcal{F}(E)$ satisfying 
\begin{equation}
    dT_\Phi(\nabla) = \pi^*\Phi(\nabla).
\end{equation}

\subsection{Cheeger--Simons differential characters}
\label{subsec:Cheeger--Simons_differential}
Let $\pi\colon \mathrm{St}(r,E) \to M$ be the Stiefel bundle of $r$-frames of $E$.
A point of $\mathrm{St}(r,E)$ is an ordered $r$-tuple of linearly independent vectors in a fiber of $E$.
Denote by $\bm{s}_{\text{taut}}$ the tautological $r$-frame of $\pi^*E$.
Let $\nabla^{\bm{s}_{\text{taut}}}$ be a connection on $\pi^*E$ for which $\bm{s}_{\text{taut}}$ is parallel.
For a connection $\nabla$ on $E$, the form
\begin{equation}
    c_q(\nabla^{\bm{s}_{\text{taut}}},\pi^*\nabla) \quad (q = k-r+1)
\end{equation}
is independent modulo exact forms of the choice of $\nabla^{\bm{s}_{\text{taut}}}$.
To see this, let $\widetilde{\nabla}^{\bm{s}_{\text{taut}}}$ be another such connection.
Using \eqref{eq:difference_form}, we have 
\begin{equation}
    c_q(\nabla^{\bm{s}_{\text{taut}}},\pi^*\nabla) - c_q(\widetilde{\nabla}^{\bm{s}_{\text{taut}}},\pi^*\nabla) = c_q(\nabla^{\bm{s}_{\text{taut}}},\widetilde{\nabla}^{\bm{s}_{\text{taut}}}) + \text{exact form}.
\end{equation}
Now $c_q(\nabla^{\bm{s}_{\text{taut}}},\widetilde{\nabla}^{\bm{s}_{\text{taut}}}) = 0$.
Indeed, every connection in the affine family $(1-t)\nabla^{\bm{s}_{\text{taut}}} + t\widetilde{\nabla}^{\bm{s}_{\text{taut}}}$ makes $\bm{s}_{\text{taut}}$ parallel, and hence the difference form vanishes.
This proves the claim.

Although this form need not be closed on $\mathrm{St}(r,E)$, its restriction to each fiber is closed because $\pi^*c_q(\nabla)$ vanishes on the fibers.
Letting $x_{2q-1}$ denote the canonical generator of $H_{2q-1}(\mathrm{St}(r,\mathbb{C}^k))$, one computes 
\begin{equation}
    \int_{x_{2q-1}} c_q(\nabla^{\bm{s}_{\text{taut}}},\pi^*\nabla) = 1.
\end{equation}

Recall the exact sequence 
\begin{equation}
    \mathbb{Z} \simeq H_{2q-1}(\mathrm{St}(r,\mathbb{C}^k),\mathbb{Z}) \to H_{2q-1}(\mathrm{St}(r,E),\mathbb{Z}) \stackrel{\pi_*}{\longrightarrow} H_{2q-1}(M,\mathbb{Z}) \to 0.
    \label{eq:exact_sequence}
\end{equation}
For any smooth cycle $z \in Z_{2q-1}(M,\mathbb{Z})$, there exist a smooth cycle $y \in Z_{2q-1}(\mathrm{St}(r,E),\mathbb{Z})$ and a smooth chain $w \in C_{2q}(M,\mathbb{Z})$ such that $z = \pi_*y + \partial w$.
The integral 
\begin{equation}
    S_{c_q}(\nabla)(z) = \int_y c_q(\nabla^{\bm{s}_{\text{taut}}},\pi^*\nabla) + \int_w c_q(\nabla) + \mathbb{Z} \in \mathbb{R}/\mathbb{Z}
\end{equation}
is independent of the choices of $y$ and $w$.
The reduction modulo $\mathbb{Z}$ is necessary, essentially because the form $c_q(\nabla^{\bm{s}_{\text{taut}}},\pi^*\nabla)$ has nontrivial periods along cycles in each fiber.
The resulting homomorphism 
\begin{equation}
    S_{c_q}(\nabla) \colon Z_{2q-1}(M,\mathbb{Z}) \to \mathbb{R}/\mathbb{Z}
\end{equation}
is called the \emph{Cheeger--Simons differential character} \cite{cheeger_differential_1985} associated with the Chern polynomial $c_q$.

If $E$ admits a global $r$-frame $\bm{s}$, $S_{c_q}(\nabla)$ admits an $\mathbb{R}$-valued lift by setting 
\begin{equation}
    \widetilde{S}_{c_q}^{\bm{s}}(\nabla)(z) \coloneqq \int_{\bm{s}_*z} c_q(\nabla^{\bm{s}_{\text{taut}}},\pi^*\nabla) = \int_z c_q(\nabla^{\bm{s}},\nabla),
\end{equation}
where $\nabla^{\bm{s}}$ denotes a connection on $E$ for which $\bm{s}$ is parallel.
Given another global $r$-frame $\bm{t}$, we have, by \eqref{eq:difference_form},
\begin{equation}
    \widetilde{S}_{c_q}^{\bm{s}}(\nabla)(z) - \widetilde{S}_{c_q}^{\bm{t}}(\nabla)(z) = \int_z c_q(\nabla^{\bm{s}},\nabla^{\bm{t}}).
\end{equation}
The right-hand side can also be expressed as an evaluation of the difference cocycle $d(\bm{s},\bm{t})$ from obstruction theory.
This follows from the fact that $c_q(\nabla^{(0)},\nabla^{(1)})$ is independent, modulo exact forms, of the choice of homotopy $\nabla^{(t)}$.
As a consequence, if $\bm{s}$ and $\bm{t}$ are homotopic through $r$-frames, then we have $\widetilde{S}_{c_q}^{\bm{s}}(\nabla) = \widetilde{S}_{c_q}^{\bm{t}}(\nabla)$.

The differential characters $S_{c_{q_i}}(\nabla)\,(i=1,2,\ldots,m)$ can be multiplied to obtain a differential character 
\begin{equation}
    S_{c_{q_1}}(\nabla)*\cdots*S_{c_{q_m}}(\nabla) \colon Z_{2(q_1+\cdots+q_m)-1}(M,\mathbb{Z}) \to \mathbb{R}/\mathbb{Z}.
\end{equation}
We do not recall the general definition of this product here; see \cite{cheeger_differential_1985}.
Instead, we describe its $\mathbb{R}$-valued lifts in terms of differential forms.

Suppose, for example, that $E$ admits an $r_1=(k-q_1+1)$-frame $\bm{s}_1$.
We then define 
\begin{equation}
    \left(\widetilde{S}_{c_{q_1}}^{\bm{s}_1}(\nabla)*S_{c_{q_2}}(\nabla)*\cdots*S_{c_{q_m}}(\nabla)\right)(z) = \int_z c_{q_1}(\nabla^{\bm{s}_1},\nabla) \wedge (c_{q_2}\cdots c_{q_m})(\nabla).
\end{equation}
If $m\ge2$ and $E$ also admits an $r_2=(k-q_2+1)$-frame $\bm{s}_2$, then, since $z$ is a cycle, 
\begin{equation}
\scalebox{0.95}{$
\begin{aligned}
    0 &= -\int_z d\left(c_{q_1}(\nabla^{\bm{s}_1},\nabla) \wedge c_{q_2}(\nabla^{\bm{s}_2},\nabla) \wedge (c_{q_3}\cdots c_{q_m})(\nabla)\right)\\
    &= \int_z c_{q_1}(\nabla^{\bm{s}_1},\nabla) \wedge (c_{q_2}\cdots c_{q_m})(\nabla) - \int_z c_{q_1}(\nabla) \wedge c_{q_2}(\nabla^{\bm{s}_2},\nabla) \wedge (c_{q_3}\cdots c_{q_m})(\nabla).
\end{aligned}
    $}
\end{equation}
Consequently, if at least two of the factors admit $\mathbb{R}$-valued lifts, the resulting lift of the product is independent of the partial frames used in its definition.

\subsection{Secondary invariants}
\label{subsec:Secondary}
Let $\mathcal{T}^\#$ and $\nabla^{\mathcal{T}^\#}$ denote the standard tractor bundle and the normal tractor connection, respectively; see Subsection \ref{subsec:Cartan_bundles}.
We would like to consider the Cheeger--Simons differential characters associated with $\nabla^{\mathcal{T}^\#}$.
However, since $\mathcal{E}(-1,0)$, and hence $\mathcal{T}^\# = \mathcal{E}(-1,0)\otimes\mathcal{T}$, may exist only locally, we instead seek a globally defined connection that retains the curvature information carried by $\nabla^{\mathcal{T}^\#}$.

Let $M$ be a compact strictly pseudoconvex CR manifold admitting a pseudo-Einstein contact form $\theta$, and let $D^\theta$ be the flat connection on $\mathcal{E}(1,0)$ determined by $\theta$.
We define $\nabla^{\mathcal{T}} = D^\theta \otimes \nabla^{\mathcal{T}^\#}$, which is a globally defined connection on $\mathcal{T}=\mathcal{E}(1,0)\otimes\mathcal{T}^\#$.

\begin{dfn}
    For positive integers $q_1,\ldots,q_m$ satisfying $q_1+\cdots+q_m=n+1$, we define 
    \begin{equation}
        \mu_{c_{q_1}\cdots c_{q_m}}(M) \coloneqq \left(S_{c_{q_1}}(\nabla^{\mathcal{T}})*\cdots*S_{c_{q_m}}(\nabla^{\mathcal{T}})\right)(M) \in \mathbb{R}/\mathbb{Z}.
    \end{equation}
    By Theorem \ref{thm:CR_invariance} below, this value is independent of the choice of pseudo-Einstein contact form $\theta$.
    We call it the \emph{CR Cheeger--Simons invariant}.
\end{dfn}

In what follows, we present two important situations in which the CR Cheeger--Simons invariant admits an $\mathbb{R}$-valued lift.

\begin{enumerate}
    \item Suppose first that $\mathcal{T}$ is trivial as a vector bundle.
    This is the case, for instance, when $M$ is embedded in $\mathbb{C}^{n+1}$.
    Since $\mathcal{T}$ admits an $r$-frame for every $1\le r\le n+2$, the discussion in Subsection \ref{subsec:Cheeger--Simons_differential} shows that, for $m\ge2$, the value 
    \begin{equation}
        \widetilde{\mu}_{c_{q_1}\cdots c_{q_m}}(M) \coloneqq \int_M c_{q_1}(\nabla^{\bm{s}_1},\nabla^{\mathcal{T}}) \wedge (c_{q_2}\cdots c_{q_m})(\nabla^{\mathcal{T}})
    \end{equation}
    is a lift of $\mu_{c_{q_1}\cdots c_{q_m}}(M)$ and is independent of the chosen partial frames.
    In contrast, when $m=1$, the integral 
    \begin{equation}
        \int_M c_{n+1}(\nabla^{\bm{s}_1},\nabla^{\mathcal{T}})
    \end{equation}
    generally depends on the choice of the $2$-frame $\bm{s}_1$.

    \item Independently of the triviality of $\mathcal{T}$, there is a canonical choice of such a $2$-frame in the case $m=1$.
    Let $\sigma_\theta \colon \mathcal{F}_{\theta}(T^{1,0}M) \to \mathcal{G}$ be the reduction of structure group to $\mathit{U}(n)$ described in Subsection \ref{subsec:Exact}.
    Among the frames of $\mathcal{T}$ corresponding to $\sigma_\theta(\mathcal{F}_{\theta}(T^{1,0}M))$, the vectors $Z_0$ and $Z_{n+1}$ are invariant under the action of $\mathit{U}(n)$.
    We set $\bm{s}_1=(Z_0,Z_{n+1})$ and define
    \begin{equation}
        \widetilde{\mu}_{c_{n+1}}(M) \coloneqq \int_M c_{n+1}(\nabla^{\bm{s}_1},\nabla^{\mathcal{T}}).
    \end{equation}
    The contact form used to specify the exact Weyl structure need not agree with the pseudo-Einstein contact form used to define $\nabla^{\mathcal{T}}$, because any two choices of the former determine homotopic $2$-frames $(Z_0,Z_{n+1})$.
\end{enumerate}

Since the curvature of the normal tractor connection $\nabla^{\mathcal{T}^\#}$ is trace-free and $D^\theta$ is flat, we have $c_1(\nabla^{\mathcal{T}})=0$.
In particular, we obtain the following result. 
\begin{prop}
    Let $\Phi$ be an invariant polynomial of the form $\Phi = c_{q_1}\cdots c_{q_m}$, where $q_1 \ge q_2 \ge \cdots \ge q_m$ and $q_1 + \cdots + q_m = n$. 
    Assume that the bundle $\mathcal{T}$ admits an $r_1=((n+2)-q_1+1)$-frame.
    Then 
    \begin{equation}
        \mu_{\Phi \cdot c_1}(M) = 0.
    \end{equation}
    
    For the corresponding $\mathbb{R}$-valued lift, assume in addition that $\mathcal{T}$ admits an $(n+3-q_2)$-frame when $m\ge2$, and that $\mathcal{T}$ is trivial when $m=1$. 
    Then
    \begin{equation}
        \widetilde{\mu}_{\Phi\cdot c_1}(M) = 0.
    \end{equation}
    
    Consequently, if there exists such a Chern monomial $\Phi$ for which $\mu_{\Phi \cdot c_1}(M) \ne 0$, then $\mathcal{T}$ admits no $r_1=((n+2)-q_1+1)$-frame.
    In particular, $M$ cannot be CR embedded in $\mathbb{C}^{n+1}$.
    \label{thm:obstruction_to_embedding}
\end{prop}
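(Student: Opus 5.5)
The plan is to use the $\mathbb{R}$-valued lift of the product character built from the given partial frame, and then exploit the pointwise vanishing $c_1(\nabla^{\mathcal{T}})=0$. Fix an $r_1$-frame $\bm{s}_1$ of $\mathcal{T}$, where $r_1=(n+2)-q_1+1$. This is exactly the rank needed for the factor $S_{c_{q_1}}$.

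By the description of lifts in Subsection \ref{subsec:Cheeger--Simons_differential}, we may reorder the factors by graded commutativity of the Cheeger--Simons product; all degrees involved are even, so no signs arise. We then obtain
\begin{equation}
    \mu_{\Phi\cdot c_1}(M) = \int_M c_{q_1}(\nabla^{\bm{s}_1},\nabla^{\mathcal{T}})\wedge (c_{q_2}\cdots c_{q_m}\cdot c_1)(\nabla^{\mathcal{T}}) + \mathbb{Z}.
\end{equation}
The integrand contains the factor $c_1(\nabla^{\mathcal{T}})$. This form vanishes identically: the curvature of $\nabla^{\mathcal{T}}$ equals that of $\nabla^{\mathcal{T}^\#}$ because $D^\theta$ is flat, and the latter is trace-free. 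So the integral is $0$ and $\mu_{\Phi\cdot c_1}(M)=0$.

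The step needing most care is the claim that the $\mathbb{R}$-valued formula above genuinely represents the product character. This relies on the paper's stated description of lifts of products, i.e. that the lift of $S_{c_{q_1}}$ multiplied by the remaining characters is represented by wedging with their curvature forms. I will take this as given, together with commutativity of $*$ up to sign.

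For the $\mathbb{R}$-valued lift with $m\ge2$, there are at least two factors, $c_{q_1}$ and $c_{q_2}$, that admit lifts. By the computation in Subsection \ref{subsec:Cheeger--Simons_differential}, $\widetilde{\mu}_{\Phi\cdot c_1}$ is then well defined and independent of which lifted factor is placed first. Placing $c_{q_1}$ first gives the same integral as above, which vanishes since $c_1(\nabla^{\mathcal{T}})\equiv0$.

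When $m=1$, $\mathcal{T}$ is trivial by hypothesis, so it admits an $(n+2)$-frame $\bm{s}$, i.e. a full frame. The factor $c_1$ then also admits a lift. Using the same exchange identity (the computation with $d$ of a product of two relative forms, integrated over the cycle $M$), we may place the lift on $c_{q_1}=c_n$:
\begin{equation}
    \widetilde{\mu}_{c_n\cdot c_1}(M)=\int_M c_n(\nabla^{\bm{s}},\nabla^{\mathcal{T}})\wedge c_1(\nabla^{\mathcal{T}})=0.
\end{equation}
The only subtlety is that the definition of $\widetilde{\mu}$ for $m\ge2$ in case (1) of Subsection \ref{subsec:Secondary} is independent of the frames chosen. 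Here both factors admit lifts, since a full frame restricts to partial frames of every rank, so this independence holds.

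Finally, the consequence is the contrapositive of the first assertion. Moreover, if $M$ CR embeds in $\mathbb{C}^{n+1}$, then $\mathcal{T}$ is trivial (case (1) of Subsection \ref{subsec:Secondary}), so it admits partial frames of every rank. This contradicts $\mu_{\Phi\cdot c_1}(M)\ne0$.
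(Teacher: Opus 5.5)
Your proposal is correct and follows the paper's argument. The paper obtains the proposition directly from the pointwise identity $c_1(\nabla^{\mathcal{T}})=0$, which holds because the tractor curvature is trace-free and $D^\theta$ is flat. This identity kills the integrand of the frame-defined lift $\int_M c_{q_1}(\nabla^{\bm{s}_1},\nabla^{\mathcal{T}})\wedge(c_{q_2}\cdots c_{q_m}\cdot c_1)(\nabla^{\mathcal{T}})$; the $r_1$-frame is needed because a flat factor alone does not force the $\mathbb{R}/\mathbb{Z}$ product to vanish (cf.\ the lens space example). Your treatment of the $\mathbb{R}$-valued lift, using independence when two factors admit lifts, and of the embedding consequence, using triviality of $\mathcal{T}$, also matches the paper.
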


We now state the main theorem of this section.

\begin{thm}
    The CR Cheeger--Simons invariant $\mu_{c_{q_1}\cdots c_{q_m}}(M)$ is independent of the choice of pseudo-Einstein contact form and hence is a CR invariant. 
    Each of the $\mathbb{R}$-valued lifts described above, whenever defined, is likewise independent of this choice.
    \label{thm:CR_invariance}
\end{thm}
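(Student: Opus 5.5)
The key observation is that changing the pseudo-Einstein contact form changes $\nabla^{\mathcal{T}}$ only through the flat factor $D^\theta$, and only by a closed imaginary $1$-form. The plan is therefore to show that the two tensor product connections differ by $\alpha\otimes\mathrm{id}$ with $d\alpha=0$. We then compute how the relevant (relative) Chern--Simons forms change under such a twist and verify that the change integrates to zero over the closed manifold $M$.

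\textbf{Step 1: the two connections differ by a closed form.} Let $\theta$ and $\widehat\theta=e^{\Upsilon}\theta$ both be pseudo-Einstein. Both $D^\theta$ and $D^{\widehat\theta}$ are connections on the same line bundle $\mathcal{E}(1,0)$ and are flat by Theorem \ref{thm:pseudo_Einstein}. Hence
\[
D^{\widehat\theta}=D^\theta+\alpha
\]
for a $1$-form $\alpha$ with $d\alpha=0$, since the difference of the curvatures is $d\alpha$. This gives
\[
\nabla^{\widehat{\mathcal{T}}}=\nabla^{\mathcal{T}}+\alpha\otimes\mathrm{id}.
\]
We may take $\nabla^{\mathcal{T}}$ to be metric for $\bm h\otimes h_\theta$. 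Because $D^{\widehat\theta}$ is metric for $h_{\widehat\theta}=e^{-\Upsilon}h_\theta$, we get $\alpha=-\tfrac12 d\Upsilon+\sqrt{-1}\beta$ with $\beta$ a real closed form. Only $\mathrm{Re}\,\alpha=-\tfrac12 d\Upsilon$ is exact; in general $[\beta]\neq0$, so the argument below must not assume $\alpha$ is exact.

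\textbf{Step 2: structure of the difference form.} Along the path $\nabla^{(t)}=\nabla^{\mathcal{T}}+t\alpha\otimes\mathrm{id}$ the curvature is constant, because $d\alpha=0$ and $\alpha\wedge\alpha=0$. Formula \eqref{eq:relativeCS} therefore gives
\[
\Phi(\nabla^{\mathcal{T}},\nabla^{\widehat{\mathcal{T}}})=l\,\widetilde\Phi(\alpha\,\mathrm{id},\Omega,\dots,\Omega)=\alpha\wedge(\partial_{\mathrm{id}}\Phi)(\Omega),
\]
where $\partial_{\mathrm{id}}\Phi$ denotes the derivative of $\Phi$ in the direction of the identity. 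Since $\nabla^{\mathcal{T}}$ and $\nabla^{\mathcal{T}^\#}$ have the same curvature, and the $P$-module structure of the tractor curvature forces its values to be nilpotent on the filtration $\mathcal{E}(1,0)\subset\langle Z_0\rangle^\perp\subset\mathcal{T}$, we have $(\partial_{\mathrm{id}}c_q)(\Omega)=0$. Moreover $c_j(\Omega)=0$ for $j\ge n+1$, because $\Omega$ is annihilated by the $\theta$-direction. I would avoid relying on nilpotency, however, and instead prove that the complete change vanishes after integration, as follows.

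\textbf{Step 3: change of the $\mathbb{R}/\mathbb{Z}$-valued character.} Using \eqref{eq:difference_form}, the lift $y$ of the cycle $M$ can be kept fixed, and the change of $S_{c_q}$ on any cycle is the integral of $c_q(\pi^*\nabla^{\mathcal{T}},\pi^*\nabla^{\widehat{\mathcal{T}}})$. This is a standard fact: $S_\Phi(\nabla_1)-S_\Phi(\nabla_0)$ equals the integral of the relative Chern--Simons form modulo $\mathbb{Z}$. For the product, combining this with \eqref{eq:productCS} shows that the change of $\mu_{c_{q_1}\cdots c_{q_m}}$ is
\[
\int_M (c_{q_1}\cdots c_{q_m})(\nabla^{\mathcal{T}},\nabla^{\widehat{\mathcal{T}}})=\int_M \alpha\wedge\partial_{\mathrm{id}}(c_{q_1}\cdots c_{q_m})(\Omega),
\]
where $\partial_{\mathrm{id}}(c_{q_1}\cdots c_{q_m})(\Omega)$ is a closed $2n$-form built from the tractor curvature.

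\textbf{Step 4: the main obstacle.} Vanishing of this integral is the crux. The cleanest route is a pointwise argument. Since $\partial_{\mathrm{id}}c_q(\Omega)$ is a combination of the $c_j(\Omega)$ with $j<q$, it suffices to show that the degree-$n$ Chern form $\partial_{\mathrm{id}}(c_{q_1}\cdots c_{q_m})(\Omega)$ vanishes identically. To do so, I would work in the frame $\sigma_\theta(\bm Z)$. The curvature of \eqref{eq:exact_Weyl} has zeroth column and last row equal to zero (this follows from normality), so its matrix is block upper triangular with diagonal blocks $0$, the $\mathfrak{u}(n)$-block, and $0$. Every polynomial $c_j(\Omega)$ therefore equals $c_j$ of the $n\times n$ Weyl-type block $S$. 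That block is $\mathfrak{su}(n)$-valued, because its trace is given by the pseudo-Einstein condition. Consequently $c_1(\Omega)=0$, but the products $c_j(\Omega)$ of total degree $n$ need not vanish on a $2n$-form level. It is at this point that I expect the real difficulty, and the proposed pointwise argument may fail.

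\textbf{Fallback for Step 4.} If the pointwise argument fails, I would instead show that $\int_M\alpha\wedge\Psi=0$ for closed $\Psi$ using the facts that $\mathrm{Re}\,\alpha$ is exact and that the real parts of the invariants are taken. The imaginary part $\beta$ pairs with the class of $\Psi$, and this class should vanish because the Weyl-block form is the restriction of a characteristic form of $T^{1,0}M$ and is annihilated by $\theta$. For the $\mathbb{R}$-valued lifts, the same computation applies verbatim with the fixed partial frames $\bm s_1$ (and $\bm s_2$), since Step 2 depends only on the connection, not on the frame.
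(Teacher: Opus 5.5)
\textbf{Where the proposal breaks.} Steps 1--3 are sound and match the paper's reduction. The change of $\mu_{c_{q_1}\cdots c_{q_m}}$, or of any lift, is $\int_M\alpha\wedge(\partial_{\mathrm{id}}\Phi)(\Omega)$. Since $\partial_{\mathrm{id}}c_q=(n+3-q)\frac{\sqrt{-1}}{2\pi}c_{q-1}$ and $c_1(\Omega)=0$, this equals $\frac{\sqrt{-1}}{2\pi}\int_M\alpha\wedge\Psi(\Omega)$ for a real degree-$n$ Chern polynomial $\Psi$. For $n=1$ you are done, because $\Psi(\Omega)$ is a multiple of $c_1(\Omega)=0$; this is exactly the paper's three-dimensional argument.

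For $n\ge 2$, however, you never prove $\int_M\mathrm{Im}\,\alpha\wedge\Psi(\Omega)=0$, and that is the whole content of the theorem. Because of the factor $\sqrt{-1}$, it is precisely $\mathrm{Im}\,\alpha$ that produces the real part of the change, so restricting to real parts gains nothing.

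The pointwise route is not just hard but false. The tractor curvature is $\mathfrak{p}$-valued, not nilpotent on the filtration, so your Step 2 claim that $\partial_{\mathrm{id}}c_q(\Omega)=0$ is wrong. Its $\mathfrak{g}_0$-part is the Chern--Moser curvature, whose degree-$n$ Chern forms do not vanish pointwise. For example, take the regular Sasakian $\eta$-Einstein manifold with $\lambda=0$ over a polarized K3 surface with its Ricci-flat metric. Section 5 gives curvature $\mathrm{diag}(\Pi,0)$, so $c_2(\nabla^{\mathcal{T}})=\pi^*c_2(\Pi)$. This is the pullback of a Gauss--Bonnet integrand that is pointwise nonnegative and integrates to $24$.

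Your fallback justification also fails. In general the tractor curvature has components along $\theta$. Even a closed horizontal $2n$-form $f(d\theta)^n$ pairs with a closed $1$-form $\beta$ to give $\int_M f\,\beta(T)\,\theta\wedge(d\theta)^n$, and nothing local forces this to vanish.

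\textbf{The missing idea: a filling.} The paper's argument for $\dim M\ge 5$ goes as follows.
\begin{enumerate}
\item Lempert's theorem realizes $M=\partial\Omega$ with $\Omega$ relatively compact and strictly pseudoconvex in a K\"ahler manifold.
\item Hirachi's result lets one take $\widetilde{\Upsilon}=\log(\widetilde{h}_{\widehat\theta}/\widetilde{h}_\theta)$ pluriharmonic on all of $\Omega$.
\item Then $\alpha=\partial\widetilde{\Upsilon}|_M$ is the boundary value of the closed form $\partial\widetilde{\Upsilon}$.
\item By the ambient construction, $\Psi(\Omega)$ is the boundary value of the closed form $\Psi(\nabla^{\widetilde{h}})$.
\item Stokes' theorem gives $\int_\Omega\conj{\partial}\partial\widetilde{\Upsilon}\wedge\Psi(\nabla^{\widetilde{h}})=0$.
\end{enumerate}
Cohomologically, the specific class $[\alpha]$ and the class $\Psi$ both extend over $\conj{\Omega}$, so their cup product vanishes on $[M]=\partial[\conj{\Omega}]$.

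Your cohomological reformulation is correct. To finish along your lines, you would need one of two things: this extension property of $[\mathrm{Im}\,\alpha]$, or an independent proof that $\Psi(T^{1,0}M)$ vanishes in $H^{2n}(M;\mathbb{R})\cong H_1(M;\mathbb{R})$. (That class is the class of $\Psi(\Omega)$, since $\mathcal{T}\cong\underline{\mathbb{C}}\oplus T^{1,0}M\oplus\underline{\mathbb{C}}$.) Either route requires a filling plus genuinely global input, and neither appears in the proposal.
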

The proof is postponed until the next section, where we establish the reduction and ambient constructions needed for it.

\subsection{When the CR holomorphic tangent bundle is trivial}
Originally, the Burns--Epstein invariant was defined for a three-dimensional compact strictly pseudoconvex CR manifold $M$ whose CR holomorphic tangent bundle $T^{1,0}M$ is trivial (as a vector bundle) \cite{burns_global_1988}. 
Then the Cartan bundle $\mathcal{G}$ admits a global section, via which the Chern--Simons form of the normal Cartan connection can be pulled back to $M$. 
The integral of the Chern--Simons form over $M$ gives the Burns--Epstein invariant. 
We present here a topological argument showing that the integral is independent of the choice of section of $\mathcal{G}$, thereby generalizing the original Burns--Epstein invariant to arbitrary dimensions. 
Then we prove that the generalized invariant coincides with our secondary invariant defined above when $M$ admits a pseudo-Einstein structure. 

\begin{rem}
    The Lee conjecture \cite{lee_pseudo-einstein_1988} states that a compact strictly pseudoconvex CR manifold admits a pseudo-Einstein structure if and only if $c_{1}(T^{1,0}M)=0$. 
    If true, then $T^{1,0}M$ being trivial ensures that $M$ admits a pseudo-Einstein structure.
\end{rem}

Let $M$ be a compact strictly pseudoconvex CR manifold of dimension $2n+1$, and assume that $T^{1,0}M$ is trivial.
Then the Cartan bundle $\pi \colon \mathcal{G} \to M$ is trivial (see Subsection \ref{subsec:Exact}), and so is $\pi^\# \colon \mathcal{G}^\# \to M$.
Let $\bm{s}^\#$ be a global section of $\mathcal{G}^\#$, which we identify with a frame of the standard tractor bundle $\mathcal{T}^\#$ (see Subsection \ref{subsec:Cartan_bundles}).
We define the generalized \emph{Burns--Epstein invariant} by 
\begin{equation}
    \widetilde{\mu}_{\Phi}^{\rm BE}(M) \coloneqq \int_{M} (\bm{s}^\#)^* T_\Phi (\nabla^{\mathcal{T}^\#}) = \int_M \Phi(\nabla^{\bm{s}^\#},\nabla^{\mathcal{T}^\#}),
    \label{eq:generalized_Burns-Epstein}
\end{equation}
where $\Phi = c_{q_1}\cdots c_{q_m} \, (q_1+\cdots +q_m = n+1)$.
When $n=1$ and $\Phi=c_2$, this coincides with the original Burns--Epstein invariant.

\begin{thm}
    The integral \eqref{eq:generalized_Burns-Epstein} is independent of the choice of $\bm{s}^\#$.
\end{thm}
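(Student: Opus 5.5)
Given two global sections $\bm{s}^\#,\bm{t}^\#$ of $\mathcal{G}^\#$, we compare the two integrals directly. By \eqref{eq:difference_form}, $\Phi(\nabla^{\bm{s}^\#},\nabla^{\mathcal{T}^\#})-\Phi(\nabla^{\bm{t}^\#},\nabla^{\mathcal{T}^\#})$ equals $\Phi(\nabla^{\bm{s}^\#},\nabla^{\bm{t}^\#})$ up to an exact form. Since $M$ is closed, the task reduces to showing
\begin{equation}
    \int_M \Phi(\nabla^{\bm{s}^\#},\nabla^{\bm{t}^\#}) = 0 .
\end{equation}
Write $\bm{t}^\#=\bm{s}^\# g$ with $g\colon M\to P^\#$. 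Both $\nabla^{\bm{s}^\#}$ and $\nabla^{\bm{t}^\#}$ are flat, so the relative form is closed. In the frame $\bm{s}^\#$ it is the pullback $g^*T_\Phi(d)$, i.e.\ the pullback by $g$ of the bi-invariant closed form on $\GL(n+2,\mathbb{C})$ representing the transgression of $\Phi$ (with $\omega^{(1)}-\omega^{(0)}=g^{-1}dg$ and $\Omega^{(t)}=(t^2-t)(g^{-1}dg)^2$). The integral is therefore $\langle g^*[\tau_\Phi],[M]\rangle$, where $\tau_\Phi\in H^{2n+1}(\GL(n+2,\mathbb{C}))$ is the transgressed class.

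The key step is to show that $g^*[\tau_\Phi]=0$, which we do by exploiting the structure group. The map $g$ takes values in $P^\#\subset\SU(h)$. By Subsection~\ref{subsec:Groups}, $P^\#=G_0^\#\ltimes P_+^\#$ with $P_+^\#$ contractible, and $G_0^\#$ deformation retracts onto a compact group isomorphic to a finite cover of $\mathit{U}(n)$, namely the $(c,u)$ with $|c|=1$ and $c^2\det u=1$. Hence $g$ is homotopic to a map into this copy of $\mathit{U}(n)$-type group $K$, which sits inside $\GL(n+2,\mathbb{C})$ as block-diagonal matrices $\mathrm{diag}(c,u,c)$. Homotopic maps give the same integral of the closed form. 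It therefore suffices to show that $\tau_\Phi$ restricts to zero in $H^{2n+1}(K;\mathbb{R})$.

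First, when $m\ge2$ the transgression of a decomposable polynomial vanishes in cohomology of the group. Concretely, the forms $\Omega^{(t)}$ restrict along $t$ and formula \eqref{eq:productCS} shows that $(\Phi\cdot\Psi)$ applied to two flat connections is exact. This gives the vanishing directly for all $m\ge2$. Second, for $m=1$ we have $\Phi=c_{n+1}$, and the primitive class $\tau_{c_{n+1}}\in H^{2n+1}(\GL(n+2))$ restricts to $K$. The rational cohomology of $K\simeq$ (finite cover of) $\mathit{U}(1)\times\SU(n)$, or of $\mathit{U}(n)$, is generated by primitive classes of degrees $1,3,\dots,2n-1$. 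It therefore has no primitive class in degree $2n+1$. Under the block embedding $\mathrm{diag}(c,u,c)$, the restriction of the universal primitive class $x_{2n+1}$ is the degree-$(2n+1)$ primitive transgression of $c_{n+1}(\mathrm{diag}(a,A,a))$. Expressed via $c_{n+1}(a\oplus A\oplus a)=a^2c_{n-1}(A)+2a\,c_n(A)+c_{n+1}(A)$, every term is decomposable or vanishes, because $c_{n+1}(A)=0$ for rank $n$. Decomposable terms transgress to zero in cohomology, so the restriction vanishes.

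The main obstacle is making this restriction argument rigorous at the level of forms rather than classes: verifying that the transgression map $I(\mathfrak{gl})\to H^*(K)$ kills decomposables and is compatible with restriction. We would handle this by the homotopy invariance noted in Subsection~\ref{subsec:Cheeger--Simons_differential} (the relative form is independent of the path modulo exact forms), working with the Chern--Weil/Weil-algebra transgression on $K$. Alternatively, and perhaps more simply, one can argue via the Chern--Weil theory of the flat trivial bundle: $\int_M\Phi(\nabla^{\bm{s}^\#},\nabla^{\bm{t}^\#})$ equals the characteristic number of the bundle over $M\times S^1$ clutched by $g$. Since $g$ reduces to $K$, this bundle carries a $K$-connection with a parallel rank-$2$ trivial summand. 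Its $c_{n+1}$ vanishes, and its products $c_{q_1}\cdots c_{q_m}$ vanish by the same Chern–Weil argument applied to the clutching, giving the result.
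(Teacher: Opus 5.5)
Your proof is correct, and its skeleton matches the paper's. Both arguments reduce the claim to the vanishing of $[T_\Phi(\omega_{\rm MC})]$ in $H^{2n+1}(P^\#,\mathbb{R})$. You do this via the cocycle identity and the transition map $g\colon M\to P^\#$; the paper does it via the chain-level decomposition $\bm{t}^\#_*M-\bm{s}^\#_*M=\partial u+i_*x$ in $\mathcal{G}^\#$. These amount to the same thing.

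The difference lies in how that vanishing is proved.

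\begin{itemize}
\item The paper handles all $\Phi$ at once. $H^\bullet(P^\#)$ is an exterior algebra on primitive generators of degrees $1,3,\ldots,2n-1$, and $[T_\Phi(\omega_{\rm MC})]$ is primitive by Borel's theorem on universally transgressive classes.
\item For $m\ge2$, your use of \eqref{eq:productCS} is more elementary and gives more. For decomposable $\Phi$, the relative form between any two flat connections is exact. So the integral is unchanged even among arbitrary global frames of $\mathcal{T}^\#$, with no structure-group input at all.
\item For $m=1$, you give the paper's primitivity argument. This tacitly uses that restriction along the homomorphism $K\hookrightarrow\GL(n+2,\mathbb{C})$ preserves primitivity.
\item Your decomposition $c_{n+1}|_{\mathfrak{k}}=\alpha\bigl(2c_n(A)+\alpha\,c_{n-1}(A)\bigr)$ is a self-contained substitute for Hopf--Borel. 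It also shows exactly where the reduction to $P^\#$ matters: for arbitrary $\GL(n+2,\mathbb{C})$-valued $g$, the $m=1$ integral need not vanish.
\end{itemize}

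The obstacle you flag is harmless. As forms, $T_\Phi(\omega_{\rm MC})|_K=T_{\Phi|_{\mathfrak{k}}}(\omega^K_{\rm MC})$. The derivation of \eqref{eq:productCS} uses only $\Ad_K$-invariance. You genuinely need this version: the factor $\alpha$ is $\Ad_K$-invariant, but it is not the restriction of any $\GL(n+2,\mathbb{C})$-invariant polynomial, since the trace vanishes on $\mathfrak{k}$.

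Your closing ``alternative'' contains a false step: the clutched bundle over $M\times\mathbb{S}^1$ has no trivial rank-$2$ summand.
\begin{itemize}
\item With $K=\{\mathrm{diag}(c,u,c)\}$, the bundle splits as $\mathcal{L}\oplus\mathcal{V}\oplus\mathcal{L}$. Here $\mathcal{L}$ comes from the nontrivial character $(c,u)\mapsto c$ and is clutched by $c\circ g\colon M\to\mathit{U}(1)$.
\item This map can be arbitrary: take $g=(f,\mathrm{diag}(f^{-2},1,\ldots,1))$ for any $f\colon M\to\mathit{U}(1)$. So $c_1(\mathcal{L})\neq0$ in general.
\item The retract $\mathrm{diag}(1,u,1)$, which does have a trivial rank-$2$ block, belongs to $Q$ acting on $\mathcal{T}$, not to $P^\#$ acting on $\mathcal{T}^\#$.
\end{itemize}
The conclusion still holds, but only for the decomposability reason. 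We have $c_{n+1}=c_1(\mathcal{L})\bigl(2c_n(\mathcal{V})+c_1(\mathcal{L})\,c_{n-1}(\mathcal{V})\bigr)$, a cup product of positive-degree classes of a clutched bundle. Such products vanish because the classifying map factors through the suspension $(M\times\mathbb{S}^1)/(M\times\{\mathrm{pt}\})$.
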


\begin{proof}
    Let $\bm{t}^\#$ be another section of $\mathcal{G}^\#$.
    Since $\mathcal{G}^\#$ is trivial, there exists a smooth chain $u \in C_{2n+2}(\mathcal{G}^\#,\mathbb{Z})$ and a smooth cycle $x \in Z_{2n+1}(P^\#,\mathbb{Z})$ such that 
    \begin{equation}
        \bm{t}^\#_*M - \bm{s}^\#_*M = \partial u + i_* x,
    \end{equation}
    where $i \colon P^\# \hookrightarrow \mathcal{G}^\#$ is the inclusion into a fixed fiber.
    We have 
    \begin{equation}
        \int_{\partial u} T_\Phi (\nabla^{\mathcal{T}^\#}) = \int_u (\pi^\#)^*\Phi(\nabla^{\mathcal{T}^\#}) = \int_{\pi^\#_*u} \Phi(\nabla^{\mathcal{T}^\#}) =0
    \end{equation}
    for dimensional reasons.
    On the other hand, the pullback $i^*T_\Phi(\nabla^{\mathcal{T}^\#})$ equals $T_\Phi(\omega_{\rm MC})$, where we regard the Maurer--Cartan form $\omega_{\rm MC}$ of $P^\#$ as the unique connection on the bundle $P^\# \to \{*\}$.
    It then suffices to show 
    \begin{equation}
        [T_\Phi(\omega_{\rm MC})] = 0 \in H^{2n+1}(P^\#,\mathbb{R}).
    \end{equation}

    We start with a description of the cohomology ring $H^\bullet(P^\#,\mathbb{Z})$.
    $P^\#$ deformation retracts onto its subgroup 
    \begin{equation}
        H^\# = \left\{\begin{pmatrix}
            c& & \\
             & u_{\beta}^{\,\,\,\alpha}& \\
             & & c
        \end{pmatrix} \,\middle|\, \begin{gathered}
            c \in \mathit{U}(1), u_{\beta}^{\,\,\,\alpha} \in \mathit{U}(n),\\
            c^2 \det u_{\beta}^{\,\,\,\alpha} = 1
        \end{gathered} \right\}.
    \end{equation}
    $H^\#$ is homeomorphic to $\mathit{U}(1) \times \SU(n)$ via 
    \begin{equation}
        \begin{pmatrix}
            c& & \\
             & u_{\beta}^{\,\,\,\alpha}& \\
             & & c
        \end{pmatrix} \mapsto (c,{\rm diag}(c^2,1,\ldots,1)u_{\beta}^{\,\,\,\alpha}).
    \end{equation}
    The basic facts are that 
    \begin{equation}
        H^\bullet(\mathit{U}(1),\mathbb{Z}) = \wedge_{\mathbb{Z}}^\bullet (x^1), \quad H^\bullet(\SU(n),\mathbb{Z}) = \wedge_{\mathbb{Z}}^\bullet (x^3,\ldots,x^{2n-1}),
    \end{equation}
    where the superscript of $x$ indicates its degree.
    Therefore, 
    \begin{equation}
        H^\bullet(P^\#,\mathbb{Z}) = \wedge_{\mathbb{Z}}^\bullet(x^1,x^3,\ldots,x^{2n-1}).
    \end{equation}

    Let $\mu\colon P^{\#}\times P^{\#}\to P^{\#}$ denote the multiplication. 
    An element $x\in H^{\bullet}(P^{\#},\mathbb{Z})$ of the cohomology ring is said to be primitive if $\mu^{*}x=x\otimes1+1\otimes x$. 
    The subgroup of primitive elements is denoted by $\mathcal{P}H^{\bullet}(P^{\#},\mathbb{Z})$. 
    By the Hopf--Borel theorem for Lie groups \cite{borel_sur_1953}, the generators $x^{1},x^{3},\ldots,x^{2n-1}$ can be taken to be primitive. 
    Hence the primitive subgroup $\mathcal{P}H^{\bullet}(P^{\#},\mathbb{Z})$ is generated by $x^{1},x^{3},\ldots,x^{2n-1}$, and therefore contains no nonzero element of degree $2n+1$. 
    On the other hand, the class represented by $T_\Phi(\omega_{\rm MC})$ is primitive by Borel's characterization of universally transgressive classes \cite{borel_sur_1953}.
    It must therefore vanish.
\end{proof}

\begin{thm}
    If $M$ admits a pseudo-Einstein contact form $\theta$ and $T^{1,0}M$ is trivial, then \begin{equation}
        \widetilde{\mu}_{\Phi}^{\rm BE}(M) = \widetilde{\mu}_{\Phi}(M).
    \end{equation}
    \label{thm:correspondence_with_classicalBE}
\end{thm}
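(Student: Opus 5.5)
Both invariants are integrals of relative Chern--Simons forms, so I would prove the identity by choosing compatible frames on $\mathcal{T}^\#$ and $\mathcal{T}$ and comparing the two forms directly. Since $T^{1,0}M$ is trivial, pick a global unitary frame $\bm{Z}=(Z_\alpha)$ of $T^{1,0}M$ for the pseudo-Einstein contact form $\theta$. Then $\sigma_\theta(\bm{Z})$ gives a global section of $\mathcal{G}$. Lifting it to $\mathcal{G}^\#$ amounts to a global choice of the root $\zeta$ with $\zeta^{n+2}=\theta^1\wedge\cdots\wedge\theta^{n+1}$. Such a choice exists because $\mathcal{G}^\#$ is trivial, or after replacing $\bm{Z}$ by another trivialization; by the preceding theorem the value of $\widetilde{\mu}^{\rm BE}_\Phi$ does not depend on which section we take. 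Write $\bm{s}^\#=(c^{-1}Z_A)$ for the resulting frame of $\mathcal{T}^\#$. The section $\zeta$ of $\mathcal{E}(-1,0)$ then gives a global nonvanishing section $\zeta^{-1}$ of $\mathcal{E}(1,0)$, and $\bm{s}=\zeta^{-1}\otimes\bm{s}^\#$ is a global frame of $\mathcal{T}=\mathcal{E}(1,0)\otimes\mathcal{T}^\#$ that lies in $\sigma_\theta(\mathcal{F}_\theta(T^{1,0}M))$.

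Next I would compare the connections. Let $a$ be the connection one-form of $D^\theta$ in the frame $\zeta^{-1}$; it is closed because $D^\theta$ is flat. In the frame $\bm{s}$, the connection form of $\nabla^{\mathcal{T}}$ is $\omega^\#+a\,\mathrm{Id}$, where $\omega^\#$ is the matrix \eqref{eq:exact_Weyl}. Take $\nabla^{\bm{s}}$ to be the trivial connection in the frame $\bm{s}$, which agrees with $\nabla^{\bm{s}^\#}$ under the identification. Along the path the curvatures are $\Omega^{(t)}=t\,d\omega^\#+t^2\omega^\#\wedge\omega^\#+t\,da\,\mathrm{Id}$. Compared with the path for $\mathcal{T}^\#$ there is an extra term $t\,da$, but it vanishes since $a$ is closed. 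The integrand $\widetilde{\Phi}(\omega^\#+a\mathrm{Id},\Omega^{(t)},\dots)$ still differs by the terms involving $a\,\mathrm{Id}$. Because $\mathfrak{su}$ is trace-free, I expect this discrepancy to be exact or otherwise controlled.

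The cleanest way to handle the $a\,\mathrm{Id}$ terms is to improve the frame so that $a=0$. We may rescale $\zeta$ by a unimodular function, since $D^\theta$ is metric, so that $\zeta^{-1}$ becomes $D^\theta$-parallel. Locally this is possible by Theorem \ref{thm:pseudo_Einstein}, but globally only when the holonomy of $D^\theta$ is trivial. In the general case I would use the formula from Subsection \ref{subsec:Cheeger--Simons_differential}: changing the full frame changes the lift by $\int_M\Phi(\nabla^{\bm{s}},\nabla^{\bm{s}'})$. Here $\nabla^{\bm{s}'}=\nabla^{\bm{s}}+a\mathrm{Id}$ with $a$ closed. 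The relative form then has the shape $l\int\widetilde{\Phi}(a\mathrm{Id},0,\dots)\,dt$, which vanishes for $l\ge2$, and for $l=1$ (i.e.\ $c_1$ factors) it has the shape $a\wedge$(something). I would then show that these terms either vanish or are exact, using $c_1(\nabla^{\mathcal{T}})=0$ and the product formula \eqref{eq:productCS}.

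Finally, I would relate the full frame $\bm{s}$ to the partial frames used in the definitions of $\widetilde{\mu}_\Phi$. For $m=1$ the definition uses $(Z_0,Z_{n+1})$, which is part of $\bm{s}$; a connection making all of $\bm{s}$ parallel also makes this $2$-frame parallel, so the same integral computes $\widetilde{\mu}_{c_{n+1}}$. For $m\ge2$, $\bm{s}$ contains every required partial frame, and a short Stokes argument with \eqref{eq:productCS} reduces $\Phi(\nabla^{\bm{s}},\nabla)$ to $c_{q_1}(\nabla^{\bm{s}},\nabla)\wedge(c_{q_2}\cdots c_{q_m})(\nabla)$ modulo exact forms, because $c_{q}(\nabla^{\bm{s}})=0$. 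I expect the main obstacle to be the global $D^\theta$-holonomy issue and the $c_1$ terms in the previous paragraph.
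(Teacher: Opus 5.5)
This is essentially the paper's proof. Inserting the flat connection $\nabla^{\bm s}+a\,\mathrm{Id}=D^\theta\otimes\nabla^{\bm s^\#}$ via \eqref{eq:difference_form}, the first leg $\Phi(\nabla^{\bm s},\nabla^{\bm s}+a\,\mathrm{Id})=(n+1)\int_0^1\widetilde{\Phi}(a\,\mathrm{Id},0,\ldots,0)\,dt$ vanishes, and the second leg equals $\Phi(\nabla^{\bm s^\#},\nabla^{\mathcal{T}^\#})$ because the difference of connection forms is $\omega^\#$ and the path curvatures agree since $da=0$.

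The obstacles you anticipate do not arise. First, \eqref{eq:difference_form} holds for any three connections, so $\nabla^{\bm s}+a\,\mathrm{Id}$ need not be the trivial connection of a global frame. The holonomy of $D^\theta$ is therefore irrelevant, and describing this step as ``changing the full frame'' is inaccurate. Second, $\deg\Phi=n+1\ge2$, so the ``$l=1$'' case never occurs, whatever $c_1$ factors $\Phi$ contains.
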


\begin{proof}
    Let $\bm{s}^\#$ be a global section of $\mathcal{G}^\#$.
    Projecting $\bm{s}^\#$ along the $(n+2)$-fold cover $\mathcal{G}^\# \to \mathcal{G}$, we obtain a global frame $\bm{s}=(Z_A)$ of $\mathcal{T}$.
    We have
    \begin{equation}
        \widetilde{\mu}_\Phi(M) = \int_M \Phi(\nabla^{\bm{s}},D^\theta \otimes \nabla^{\mathcal{T}^\#}).
    \end{equation}

    By the cocycle identity for relative Chern--Simons forms, 
    \begin{equation}
        \Phi(\nabla^{\bm{s}},D^\theta \otimes \nabla^{\mathcal{T}^\#}) = \Phi(\nabla^{\bm{s}},D^\theta \otimes \nabla^{\bm{s}^\#}) + \Phi(D^\theta \otimes \nabla^{\bm{s}^\#},D^\theta \otimes \nabla^{\mathcal{T}^\#})
    \end{equation}
    modulo an exact form.

    As a frame of $\mathcal{T}^\#$, $\bm{s}^\#$ can be written as $(c^{-1}Z_A)=(\zeta\otimes Z_A)$, where $\zeta$ is the nowhere-vanishing section of $\mathcal{E}(-1,0)^\times$ determined by $(Z_A)$, and $c$ is the corresponding fiber coordinate (see Subsection \ref{subsec:Cartan_bundles}).
    Let $\eta$ denote the connection form of $D^\theta$ with respect to the frame $\zeta^{-1}$.
    With respect to $\bm{s}$, the connection form of $(1-t)\nabla^{\bm{s}} + tD^\theta \otimes \nabla^{\bm{s}^\#}$ is $t\eta \delta_{B}^{\,\,\,A}$.
    Since $D^\theta$ is flat, this family has vanishing curvature.
    Hence $\Phi(\nabla^{\bm{s}},D^\theta \otimes \nabla^{\bm{s}^\#}) = 0$ by  \eqref{eq:relativeCS}.

    On the other hand, the difference between the connection forms of $D^\theta \otimes \nabla^{\bm{s}^\#}$ and $D^\theta \otimes \nabla^{\mathcal{T}^\#}$ is the same as that of $\nabla^{\bm{s}^\#}$ and $\nabla^{\mathcal{T}^\#}$, and the curvature form of $(1-t)D^\theta \otimes \nabla^{\bm{s}^\#} + tD^\theta \otimes \nabla^{\mathcal{T}^\#}$ is the same as that of $(1-t)\nabla^{\bm{s}^\#} + t\nabla^{\mathcal{T}^\#}$. 
    Therefore, 
    \begin{equation}
        \Phi(D^\theta \otimes \nabla^{\bm{s}^\#},D^\theta \otimes \nabla^{\mathcal{T}^\#}) = \Phi(\nabla^{\bm{s}^\#},\nabla^{\mathcal{T}^\#}),
    \end{equation}
    which proves the theorem.
\end{proof}

\section{Reduction and Ambient Construction}
In this section, we introduce a corank-one subbundle $\underline{\mathcal{T}} \subset \mathcal{T}$ together with a connection $\nabla^{\underline{\mathcal{T}}}$ such that 
\begin{equation}
    S_{c_q}(\nabla^{\mathcal{T}}) = S_{c_q}(\nabla^{\underline{\mathcal{T}}}).
\end{equation}
When $M$ arises as the boundary of a relatively compact strictly pseudoconvex domain $\Omega$, the connection $\nabla^{\underline{\mathcal{T}}}$ admits an ambient construction that is naturally related to the Kähler--Einstein geometry of $\Omega$.
This ambient construction is the key step in comparing our secondary invariants, defined from the CR Cartan geometry of $M$, with characteristic forms arising from the complex geometry of the filling domain $\Omega$, and eventually leads to the bulk--boundary formulas proved below.

\subsection{Reduction}
Let $M$ be a compact strictly pseudoconvex CR manifold of dimension $2n+1$, and assume that $M$ admits a pseudo-Einstein contact form $\theta$.
Let $\sigma_\theta \colon \mathcal{F}_{\theta}(T^{1,0}M) \to \mathcal{G}$ be the reduction of structure group to $\mathit{U}(n)$ described in Subsection \ref{subsec:Exact}.
We introduce another $\mathit{U}(n)$-reduction of the full frame bundle $\mathcal{F}(\mathcal{T})$, denoted by $\sigma'_\theta \colon \mathcal{F}_{\theta}(T^{1,0}M) \to \mathcal{F}(\mathcal{T})$, as follows: 
\begin{equation}
    \sigma'_\theta(\bm{Z}) = \sigma_\theta(\bm{Z}) \cdot \begin{pmatrix}
        1& & \frac{P}{n}\\
         & \delta_{\beta}^{\,\,\,\alpha}& \\
         & & 1
    \end{pmatrix}.
\end{equation}
By \eqref{eq:exact_Weyl}, \eqref{eq:TanakaConn} and \eqref{eq:pseudo_Einstein}, the connection form of $\nabla^{\mathcal{T}}$ with respect to the frame $\sigma'_\theta(\bm{Z})$ is given by 
\begin{equation}\scalebox{0.95}{$
    \begin{pmatrix}
        0& -\sqrt{-1}A_{\beta\rho}\theta^\rho + \sqrt{-1}\kappa_\beta\theta& \kappa_\rho\theta^\rho + \left(\frac{1}{2}\kappa_T+\sqrt{-1}\left(\frac{1}{2n}\Delta_b\kappa-\frac{1}{n}\abs{A}^2\right)\right)\theta\\
        \theta^\alpha& \omega_{\beta}^{\,\,\,\alpha} + \sqrt{-1}\kappa\theta\delta_{\beta}^{\,\,\,\alpha}& \kappa\theta^\alpha - \sqrt{-1}A^{\alpha}_{\,\,\,\conj{\sigma}}\theta^{\conj{\sigma}} + \sqrt{-1}\kappa^\alpha\theta\\
        \sqrt{-1}\theta& -\theta_\beta& \sqrt{-1}\kappa\theta
    \end{pmatrix},$}
    \label{eq:exact_Weyl+pseudo_Einstein}
\end{equation}
where $\kappa = 2P/n$, $\Delta_b\kappa = -\kappa_{\gamma}^{\,\,\,\gamma}-\kappa^{\gamma}_{\,\,\,\gamma}$ and $\abs{A}^2=A_{\alpha\beta}A^{\alpha\beta}$.

Since the transition functions of the frames $\sigma'_\theta(\mathcal{F}_\theta(T^{1,0}M))$ preserve the span of the last $n+1$ components, these components define a subbundle $\underline{\mathcal{T}}\subset\mathcal{T}$.
We define $\nabla^{\underline{\mathcal{T}}}$ to be the connection on $\underline{\mathcal{T}}$ whose connection form, with respect to these frames, is given by the lower-right $(n+1)\times(n+1)$ block of \eqref{eq:exact_Weyl+pseudo_Einstein}.
The first component $Z_0$ of the frames in $\sigma'_\theta(\mathcal{F}_{\theta}(T^{1,0}M))$ is also invariant under the action of $\mathit{U}(n)$ and yields a decomposition 
\begin{equation}
    \mathcal{T} = \langle Z_0 \rangle \oplus \underline{\mathcal{T}}.
    \label{eq:decomposition_T}
\end{equation}
For each $1\le r\le n+1$, an inclusion 
\begin{equation}
    \iota \colon \mathrm{St}(r,\underline{\mathcal{T}}) \hookrightarrow \mathrm{St}(r+1,\mathcal{T})
\end{equation}
is defined by inserting $Z_0$ as a first component.

\begin{thm}
    \label{thm:reduction_theorem}
    For $1\le q\le n+1$, the Cheeger--Simons differential characters associated with $\nabla^{\mathcal{T}}$ and $\nabla^{\underline{\mathcal{T}}}$ agree:
    \begin{equation}
        S_{c_q}(\nabla^{\mathcal{T}}) = S_{c_q}(\nabla^{\underline{\mathcal{T}}}).
    \end{equation}
    Moreover, suppose that $S_{c_q}(\nabla^{\underline{\mathcal{T}}})$ admits an $\mathbb{R}$-valued lift defined by an $(n+2-q)$-frame $\underline{\bm{s}}$ of $\underline{\mathcal{T}}$. 
    Then
    \begin{equation}
        \widetilde S_{c_q}^{\iota(\underline{\bm{s}})}(\nabla^{\mathcal{T}}) = \widetilde S_{c_q}^{\underline{\bm{s}}}(\nabla^{\underline{\mathcal{T}}}).
    \end{equation}
    The same assertion holds for $\mathbb{R}$-valued lifts of products of differential characters.
\end{thm}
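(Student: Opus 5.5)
The plan is to compare the two characters on cycles through the inclusion $\iota\colon \mathrm{St}(r,\underline{\mathcal{T}})\hookrightarrow\mathrm{St}(r+1,\mathcal{T})$ with $r=n+2-q$. For $\mathcal{T}$ of rank $n+2$ the relevant Stiefel bundle consists of $(n+3-q)=(r+1)$-frames; for $\underline{\mathcal{T}}$ of rank $n+1$ it consists of $r$-frames. The two fibres $\mathrm{St}(r,\mathbb{C}^{n+1})\subset\mathrm{St}(r+1,\mathbb{C}^{n+2})$ are both $(2q-2)$-connected, and the fibre inclusion maps the generator $x_{2q-1}$ to the generator. Hence a lift $y$ of a cycle $z$ to $\mathrm{St}(r,\underline{\mathcal{T}})$, with $z=\pi_*y+\partial w$, pushes forward to a valid lift $\iota_*y$ in $\mathrm{St}(r+1,\mathcal{T})$ with the same $w$. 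It therefore suffices to prove two statements:
\begin{equation}
\int_w c_q(\nabla^{\mathcal{T}})=\int_w c_q(\nabla^{\underline{\mathcal{T}}}),\qquad
\iota^*c_q(\nabla^{\bm{s}_{\rm taut}},\pi^*\nabla^{\mathcal{T}})\equiv c_q(\nabla^{\underline{\bm{s}}_{\rm taut}},\pi^*\nabla^{\underline{\mathcal{T}}})
\end{equation}
where the second identity is to hold modulo exact forms, and where on $\mathrm{St}(r,\underline{\mathcal{T}})$ we use the connection $\nabla^{\bm{s}_{\rm taut}}$ on $\pi^*\mathcal{T}$ making $(Z_0,\underline{\bm{s}}_{\rm taut})$ parallel. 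The $\mathbb{R}$-lift statement is the same identity pulled back by the section $\underline{\bm{s}}$, i.e.\ $c_q(\nabla^{\iota\underline{\bm{s}}},\nabla^{\mathcal{T}})$ versus $c_q(\nabla^{\underline{\bm{s}}},\nabla^{\underline{\mathcal{T}}})$ modulo exact forms. The product statement then follows by combining this with the equality of characteristic forms $c_{q_i}(\nabla^{\mathcal{T}})=c_{q_i}(\nabla^{\underline{\mathcal{T}}})$ in the wedge formula of Subsection \ref{subsec:Cheeger--Simons_differential}.

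The key observation comes from \eqref{eq:exact_Weyl+pseudo_Einstein}. With respect to the decomposition \eqref{eq:decomposition_T}, the first column of the connection matrix is $(0,\theta^\alpha,\sqrt{-1}\theta)^{\mathsf T}$, so $\nabla^{\mathcal{T}}Z_0$ has no $Z_0$-component. The connection is therefore block upper-triangular in the opposite sense: in the frame $(Z_0,\underline{\text{frame}})$ its matrix has the form
\begin{equation}
\begin{pmatrix}0&\beta\\ \gamma&\underline{\omega}\end{pmatrix}.
\end{equation}
I would introduce the auxiliary connection $\nabla'=d\oplus\nabla^{\underline{\mathcal{T}}}$, in which $Z_0$ is parallel and $\beta$ and $\gamma$ are dropped, and compare $\nabla'$ with $\nabla^{\mathcal{T}}$. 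Along the affine path $\nabla^{(t)}$ the curvature of the $\mathcal{T}$-part equals $\underline{\Omega}+O(t)$ in the off-diagonal and corner entries. This is not triangular, so a direct vanishing argument does not work.

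Instead I would use a two-step homotopy. First kill $\beta$, the row into the $Z_0$ component: the connections with $\beta$ removed but $\gamma$ kept are block lower-triangular, so their Chern forms equal those of $d\oplus\nabla^{\underline{\mathcal{T}}}$. Then kill $\gamma$ along a family that stays lower-triangular, for which the transgression integrand $\widetilde{c}_q(\text{difference},\Omega^{(t)},\dots)$ vanishes by the triangular-block structure, since the difference is strictly lower and the curvature is lower-triangular. The first step, removing $\beta$, is where the real input is needed. I expect to use the specific structure of \eqref{eq:exact_Weyl+pseudo_Einstein}: the entries $\beta$ pair with $\gamma$ in the curvature only through $\gamma\wedge\beta$ terms, which contribute to the $\underline{\mathcal{T}}$ block. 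I would try to show that $c_q(\nabla^{\mathcal{T}})=c_q(\nabla')$ holds exactly, perhaps using $c_1=0$ and that the $00$ curvature entry equals $\beta\wedge\gamma$. Failing that, I would show that the relative form is exact and lifts compatibly to the Stiefel bundle, where $Z_0$ is the first tautological vector, so that the relative forms computed against $\nabla^{\bm{s}_{\rm taut}}$ agree up to exact terms. This $\beta$-removal step is the main obstacle.

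Finally, on $\mathrm{St}(r,\underline{\mathcal{T}})$, the cocycle identity \eqref{eq:difference_form} reduces the second display to $c_q(\nabla^{\bm{s}_{\rm taut}},\pi^*\nabla')$. For the connection $d\oplus\nabla^{\underline{\mathcal{T}}}$ and a parallel frame of the form $Z_0\oplus(\text{frame of }\underline{\mathcal{T}})$, the total Chern form is multiplicative, so this equals $c_q(\nabla^{\underline{\bm{s}}_{\rm taut}},\pi^*\nabla^{\underline{\mathcal{T}}})$, using that the $Z_0$ factor is flat and trivial. This completes the comparison.
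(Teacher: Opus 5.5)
You have the paper's structure. You compare lifts through $\iota$, and you interpose the connection that keeps $\gamma=\II_{\langle Z_0\rangle}(\nabla^{\mathcal T})$ but drops $\beta=\II_{\underline{\mathcal T}}(\nabla^{\mathcal T})$; this is exactly the paper's $\widetilde\nabla^{\mathcal T}$. You then handle the block lower-triangular leg by the block-diagonal argument. But the step you call the main obstacle, removing $\beta$, is left unproved, and it is the only step with real content.

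Your fallback of proving $c_q(\nabla^{\mathcal T})=c_q(\nabla')$ would not suffice. Equality of Chern forms only makes $c_q(\widetilde\nabla^{\mathcal T},\nabla^{\mathcal T})$ closed. A closed form with non-integral periods would shift both the $\mathbb{R}/\mathbb{Z}$-character and its lifts. You need this relative form to be exact, and in fact it vanishes identically.

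The missing computation concerns the path $(1-t)\widetilde\nabla^{\mathcal T}+t\nabla^{\mathcal T}$. In the frame $\sigma'_\theta(\bm Z)$ its connection form is $\begin{pmatrix}0&t\beta\\ \gamma&\underline\omega\end{pmatrix}$, and the entire first column of its curvature vanishes for every $t$:
\begin{itemize}
\item Write the entries of $\beta$ as in \eqref{eq:exact_Weyl+pseudo_Einstein}: $b_\sigma=-\sqrt{-1}A_{\sigma\rho}\theta^\rho+\sqrt{-1}\kappa_\sigma\theta$ and $b_{n+1}=\kappa_\rho\theta^\rho+(\cdots)\theta$. Up to sign, the corner entry is $t\bigl(\theta^\sigma\wedge b_\sigma+\sqrt{-1}\,\theta\wedge b_{n+1}\bigr)$.
\item This is zero: $A_{\sigma\rho}\theta^\sigma\wedge\theta^\rho=0$ by the symmetry of the torsion, and the two $\kappa$-terms cancel. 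So the corner entry you noticed is not an obstruction; it is $0$.
\item The entries below the corner do not involve $\beta$. They vanish by the structure equations \eqref{eq:str.eq.forTW_1} and \eqref{eq:str.eq.forTW_2}.
\end{itemize}
Hence every $\Omega^{(t)}$ is block upper-triangular with zero $(0,0)$-block. The difference $\begin{pmatrix}0&\beta\\0&0\end{pmatrix}$ has zero diagonal blocks. So $\widetilde c_q(\text{difference},\Omega^{(t)},\dots,\Omega^{(t)})=0$ pointwise, and $c_q(\pi^*\widetilde\nabla^{\mathcal T},\pi^*\nabla^{\mathcal T})=0$ on the Stiefel bundle. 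Combined with your lower-triangular leg and the cocycle identity, this closes the argument.

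The same computation corrects your diagnosis of the direct path from $\nabla'$. Its curvature is also block upper-triangular. What fails there is that the difference $\begin{pmatrix}0&\beta\\ \gamma&0\end{pmatrix}$ has both off-diagonal blocks, so terms such as $\tr(\gamma\, b\, D^{q-2})$ survive. That is why the intermediate connection $\widetilde\nabla^{\mathcal T}$ is needed.
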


\begin{proof}
    First, note that $\iota$ induces a morphism of exact sequences in \eqref{eq:exact_sequence}:
    \begin{equation}
    \scalebox{0.95}{$
        \begin{tikzcd}[ampersand replacement=\&]
        H_{2q-1}(\mathrm{St}(r,\mathbb{C}^{n+1}))
          \arrow[r] \arrow[d, "\wr"']
        \&
        H_{2q-1}(\mathrm{St}(r,\underline{\mathcal{T}}))
          \arrow[r, "\pi_*"] \arrow[d, "\iota_*"']
        \&
        H_{2q-1}(M)
          \arrow[r] \arrow[d, equal]
        \&
        0
        \\
        H_{2q-1}(\mathrm{St}(r+1,\mathbb{C}^{n+2}))
          \arrow[r]
        \&
        H_{2q-1}(\mathrm{St}(r+1,\mathcal{T}))
          \arrow[r, "\pi_*"]
        \&
        H_{2q-1}(M)
          \arrow[r]
        \&
        0
        \end{tikzcd}
        $}
    \end{equation}
    where $\pi$ denotes the projection of the Stiefel bundle.
    For any smooth cycle $z\in Z_{2q-1}(M,\mathbb{Z})$, there exist a smooth cycle $y\in Z_{2q-1}(\mathrm{St}(r,\underline{\mathcal{T}}),\mathbb{Z})$ and a smooth chain $w\in C_{2q}(M,\mathbb{Z})$ such that $z = \pi_* y + \partial w$.
    In the following, forms on $\mathrm{St}(r+1,\mathcal{T})$ are pulled back to $\mathrm{St}(r,\underline{\mathcal{T}})$ via $\iota$ without changing notation.
    Then we have 
    \begin{align}
        S_{c_q}(\nabla^{\mathcal{T}})(z) &= \int_y c_q(\nabla^{\iota\left(\underline{\bm{s}}_{\text{taut}}\right)},\pi^*\nabla^{\mathcal{T}}) + \int_w c_q(\nabla^{\mathcal{T}}) + \mathbb{Z},\\
        S_{c_q}(\nabla^{\underline{\mathcal{T}}})(z) &= \int_y c_q(\nabla^{\underline{\bm{s}}_{\text{taut}}},\pi^*\nabla^{\underline{\mathcal{T}}}) + \int_w c_q(\nabla^{\underline{\mathcal{T}}}) + \mathbb{Z}.
    \end{align}
    If $\underline{\mathcal{T}}$ admits an $r$-frame $\underline{\bm{s}}$, then 
    \begin{align}
        \widetilde{S}_{c_q}^{\iota\left(\underline{\bm{s}}\right)}(\nabla^{\mathcal{T}})(z) &= \int_{\underline{\bm{s}}_*z} c_q(\nabla^{\iota\left(\underline{\bm{s}}_{\text{taut}}\right)},\pi^*\nabla^{\mathcal{T}}),\\
        \widetilde{S}_{c_q}^{\underline{\bm{s}}}(\nabla^{\underline{\mathcal{T}}})(z) &= \int_{\underline{\bm{s}}_*z} c_q(\nabla^{\underline{\bm{s}}_{\text{taut}}},\pi^*\nabla^{\underline{\mathcal{T}}}).
    \end{align}
    Therefore, it suffices to show 
    \begin{equation}
        c_q(\nabla^{\iota\left(\underline{\bm{s}}_{\text{taut}}\right)},\pi^*\nabla^{\mathcal{T}}) = c_q(\nabla^{\underline{\bm{s}}_{\text{taut}}},\pi^*\nabla^{\underline{\mathcal{T}}}) + \text{exact form},
        \label{eq:reduction_differential_form}
    \end{equation}
    since taking its exterior derivative also gives $c_q(\nabla^{\mathcal{T}}) = c_q(\nabla^{\underline{\mathcal{T}}})$.

    For a connection $\nabla$ on $\mathcal{T}$, define the second fundamental forms by 
    \begin{align}
        \II_{\langle Z_0 \rangle}(\nabla) &= p_{\underline{\mathcal{T}}} \circ \nabla|_{\langle Z_0 \rangle} \in A^1\left(M,\Hom(\langle Z_0 \rangle, \underline{\mathcal{T}})\right),\\
        \II_{\underline{\mathcal{T}}}(\nabla) &= p_{\langle Z_0 \rangle} \circ \nabla|_{\underline{\mathcal{T}}} \in A^1\left(M,\Hom(\underline{\mathcal{T}},\langle Z_0 \rangle)\right),
    \end{align}
    where $p_{\langle Z_0 \rangle},p_{\underline{\mathcal{T}}}$ denote the projections onto the indicated subbundles with respect to the decomposition \eqref{eq:decomposition_T}.
    Let $\widetilde{\nabla}^{\mathcal{T}}$ be a connection on $\mathcal{T}$ constructed from $\nabla^{Z_0}$ (the $Z_0$-trivial connection on $\langle Z_0 \rangle$) and  $\nabla^{\underline{\mathcal{T}}}$ with the prescribed second fundamental forms 
    \begin{equation}
        \II_{\langle Z_0 \rangle}(\widetilde{\nabla}^{\mathcal{T}}) = \II_{\langle Z_0 \rangle}(\nabla^{\mathcal{T}}), \quad \II_{\underline{\mathcal{T}}}(\widetilde{\nabla}^{\mathcal{T}}) = 0.
    \end{equation}
    Then the left-hand side of \eqref{eq:reduction_differential_form} can be decomposed as 
    \begin{equation}
        c_q(\nabla^{\iota\left(\underline{\bm{s}}_{\text{taut}}\right)},\pi^*\nabla^{\mathcal{T}}) = c_q(\nabla^{\iota\left(\underline{\bm{s}}_{\text{taut}}\right)},\pi^*\widetilde{\nabla}^{\mathcal{T}}) + c_q(\pi^*\widetilde{\nabla}^{\mathcal{T}},\pi^*\nabla^{\mathcal{T}}) + \text{exact form}.
    \end{equation}

    If we take $\nabla^{\iota\left(\underline{\bm{s}}_{\text{taut}}\right)}=\nabla^{Z_0}\oplus\nabla^{\underline{\bm{s}}_{\text{taut}}}$, the connection form of $(1-t)\nabla^{\iota\left(\underline{\bm{s}}_{\text{taut}}\right)}+t\pi^*\widetilde{\nabla}^{\mathcal{T}}$ with respect to a frame compatible with the decomposition \eqref{eq:decomposition_T} is given by
    \begin{equation}
        \begin{pmatrix}
            0& 0\\
            \bullet& \conn\left((1-t)\nabla^{\underline{\bm{s}}_{\text{taut}}}+t\pi^*\nabla^{\underline{\mathcal{T}}}\right)
        \end{pmatrix},
    \end{equation}
    where $\conn$ denotes the connection form. 
    The curvature form is then 
    \begin{equation}
        \begin{pmatrix}
            0& 0\\
            \bullet& \curv\left((1-t)\nabla^{\underline{\bm{s}}_{\text{taut}}}+t\pi^*\nabla^{\underline{\mathcal{T}}}\right)
        \end{pmatrix}.
    \end{equation}
    It follows from \eqref{eq:relativeCS} that 
    \begin{equation}
        c_q(\nabla^{\iota\left(\underline{\bm{s}}_{\text{taut}}\right)},\pi^*\widetilde{\nabla}^{\mathcal{T}}) = c_q(\nabla^{\underline{\bm{s}}_{\text{taut}}},\pi^*\nabla^{\underline{\mathcal{T}}}).
    \end{equation}

    On the other hand, the connection form of $(1-t)\widetilde{\nabla}^{\mathcal{T}}+t\nabla^{\mathcal{T}}$ with respect to the local frame $\sigma'_\theta(\bm{Z})\,(\bm{Z}\in \mathcal{F}_\theta(T^{1,0}M))$ is given by 
    \begin{equation}\scalebox{0.9}{$
        \begin{pmatrix}
        0& -\sqrt{-1}tA_{\beta\rho}\theta^\rho + \sqrt{-1}t\kappa_\beta\theta& t\kappa_\rho\theta^\rho + t\left(\frac{1}{2}\kappa_T+\sqrt{-1}\left(\frac{1}{2n}\Delta_b\kappa-\frac{1}{n}\abs{A}^2\right)\right)\theta\\
        \theta^\alpha& \omega_{\beta}^{\,\,\,\alpha} + \sqrt{-1}\kappa\theta\delta_{\beta}^{\,\,\,\alpha}& \kappa\theta^\alpha - \sqrt{-1}A^{\alpha}_{\,\,\,\conj{\sigma}}\theta^{\conj{\sigma}} + \sqrt{-1}\kappa^\alpha\theta\\
        \sqrt{-1}\theta& -\theta_\beta& \sqrt{-1}\kappa\theta
    \end{pmatrix}.$}
    \end{equation}
    A direct computation using the structure equations \eqref{eq:str.eq.forTW_1}, \eqref{eq:str.eq.forTW_2} for the Tanaka--Webster connection and the symmetry $A_{\alpha\beta} = A_{\beta\alpha}$ of the torsion shows that the curvature is of the form 
    \begin{equation}
        \begin{pmatrix}
            0& \bullet\\
            0& \bullet
        \end{pmatrix}.
    \end{equation}
    Therefore, by virtue of \eqref{eq:relativeCS}, we have 
    \begin{equation}
        c_q(\pi^*\widetilde{\nabla}^{\mathcal{T}},\pi^*\nabla^{\mathcal{T}})=0.
    \end{equation}
    This proves \eqref{eq:reduction_differential_form}, and hence the assertions for $S_{c_q}$ and its $\mathbb{R}$-valued lifts. 
    The assertion for products follows by wedging the above relative Chern--Simons forms with the remaining characteristic forms.
\end{proof}

\begin{rem}
    Recall that the reduction $\sigma_\theta \colon \mathcal{F}_{\theta}(T^{1,0}M) \to \mathcal{G}$ is given by taking the frame of $\mathcal{T}$ dual to 
    \begin{equation}
        -\frac{1}{n+2}\omega_{\gamma}^{\,\,\,\gamma} + \frac{dc}{c} - \frac{\sqrt{-1}}{n+2}P\theta, \theta^\alpha, \sqrt{-1}\theta.
    \end{equation}
    Then the reduction $\sigma'_\theta \colon \mathcal{F}_\theta(T^{1,0}M) \to \mathcal{F}(\mathcal{T})$ corresponds to the dual frame of 
    \begin{equation}
        -\frac{1}{n+2}\omega_{\gamma}^{\,\,\,\gamma} + \frac{dc}{c} - \frac{\sqrt{-1}}{n(n+2)}R\theta, \theta^\alpha, \sqrt{-1}\theta,
    \end{equation}
    the first component of which is precisely the connection form of $D^\theta$.
    Therefore, the bundle $\underline{\mathcal{T}}$ is the horizontal lift of $T^{\mathbb{C}}M/T^{0,1}M$ with respect to the connection $D^\theta$.
\end{rem}

\subsection{Strictly pseudoconvex domains}
Let $X$ be a complex manifold of complex dimension $n+1$, and let $\Omega \subset X$ be a relatively compact domain with smooth, connected boundary $M = \partial \Omega$.
The boundary $M$ carries a natural CR structure.
We assign an orientation to $TM/HM$ so that a smooth real function $\rho$ on $X$ satisfying 
\begin{equation}
    \Omega = \{\rho<0\},\quad M = \{\rho = 0\},\quad d\rho \ne 0 \text{ on } M
\end{equation}
determines a positive section $\theta = \frac{\sqrt{-1}}{2}(\conj{\partial}-\partial)\rho|_{TM}$ of $(TM/HM)^*=HM^\perp$.
Such a function $\rho$ is called a \emph{defining function} of the domain $\Omega$.
We say $\Omega$ is \emph{strictly pseudoconvex} if the boundary $M$ with the orientation of $TM/HM$ is strictly pseudoconvex.

Let $\Omega\subset X$ be a relatively compact strictly pseudoconvex domain.
Given a defining function $\rho$ of $\Omega$, there is a unique $(1,0)$-vector field $\xi$ near $M$ satisfying 
\begin{equation}
    \xi \rho = 1, \quad \xi \intprod \partial \conj{\partial} \rho = \kappa \conj{\partial} \rho,
    \label{eq:xi}
\end{equation}
where $\kappa$ is a real-valued function called the \emph{transverse curvature} of $\rho$.
Decomposing $\xi = N - \frac{\sqrt{-1}}{2}T$ into real vector fields $N$ and $T$, we obtain 
\begin{equation}
    N\rho = 1,\quad \theta(N) = 0,\quad T\rho = 0,\quad \theta(T)=1,\quad T \intprod d\theta = 0.
    \label{eq:N_and_T}
\end{equation}
Thus $N$ is everywhere transverse to $M = \partial \Omega$ and points outward, while $T$ is the Reeb vector field of the contact form $\theta$.

Let $\mathcal{K}\to X$ be the canonical bundle of $X$.
For integers $w_1,w_2 \in \mathbb{Z}$, define the ambient density bundle by 
\begin{equation}
    \widetilde{\mathcal{E}}(w_1,w_2) = \mathcal{K}^{-w_1/(n+2)} \otimes (\conj{\mathcal{K}})^{-w_2/(n+2)}.
\end{equation}
It is intrinsic and globally defined if $w_1-w_2 \in (n+2)\mathbb{Z}$; otherwise, it is defined only locally.
A section of $\widetilde{\mathcal{E}}(w_1,w_2)$ is identified with a function $\bm{f}$ on $\mathcal{K}^\times$ with the homogeneity condition 
\begin{equation}
    \bm{f}(\lambda \zeta) = \lambda^{w_1/(n+2)}(\conj{\lambda})^{w_2/(n+2)}\bm{f}(\zeta), \quad \zeta \in \mathcal{K}^\times,\lambda \in \mathbb{C}^\times.
\end{equation}
Since $\mathcal{K}|_M = \mathcal{K}_M$, we have $\widetilde{\mathcal{E}}(w_1,w_2)|_M = \mathcal{E}(w_1,w_2)$.

\subsection{Fefferman defining functions}
In this subsection, we introduce a special class of defining functions of relatively compact strictly pseudoconvex domains, called Fefferman defining functions. 
They are characterized as approximate solutions to a complex Monge--Ampère equation. 
Such defining functions need not always exist; their existence is equivalent to the existence of a pseudo-Einstein contact form on the boundary.

Let $\mathcal{K}\to X$ be the canonical bundle of $X$, and let $\bm{\zeta}$ be the tautological $(n+1,0)$-form on $\mathcal{K}$.
Define a volume form on $\mathcal{K}^\times$ by 
\begin{equation}
    \bm{v} = \sqrt{-1}^{(n+2)^2} d\bm{\zeta} \wedge \conj{d\bm{\zeta}}.
\end{equation}

A real density $\bm{\rho} \in \Gamma \widetilde{\mathcal{E}}_{\mathbb{R}}(1,1)$ is called a \emph{defining density} of $\Omega$ if the corresponding homogeneous function on $\mathcal{K}^\times$ is a defining function of $\mathcal{K}^\times|_\Omega$.
\begin{thm}[\cite{hirachi_q-prime_2014}]
    There exists a defining density $\bm{\rho}$ of $\Omega$ satisfying 
    \begin{equation}
        (\sqrt{-1}\partial\conj{\partial}\bm{\rho})^{n+2} = k_n(-1+O(\bm{\rho}^{n+2}))\bm{v},
    \end{equation}
    where $O(\bm{\rho}^{n+2})$ stands for a term $\phi \bm{\rho}^{n+2}$ with $\phi \in \Gamma \widetilde{\mathcal{E}}(-n-2,-n-2)$ and $k_n = (n+1)!/(n+2)$.
    Such a density is unique modulo $O(\bm{\rho}^{n+3})$, and is called a \emph{Fefferman defining density}.
\end{thm}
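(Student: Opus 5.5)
The plan is to reduce the density equation to Fefferman's complex Monge--Amp\`ere equation on $X$ and run Fefferman's iterative construction. The one extra point is to check that every step is natural with respect to the choice of local trivialization of $\mathcal{K}$, so that the local solutions glue into a global density.

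\textbf{Step 1: local reduction to a Monge--Amp\`ere operator.} Take local holomorphic coordinates $z$ on $X$ and the holomorphic frame $dz^1\wedge\cdots\wedge dz^{n+1}$ of $\mathcal{K}$. Write a point of $\mathcal{K}^\times$ as $\lambda\, dz^1\wedge\cdots\wedge dz^{n+1}$. A real density $\bm{\rho}\in\Gamma\widetilde{\mathcal{E}}_{\mathbb{R}}(1,1)$ then corresponds to the function
\begin{equation}
\bm{\rho}=\abs{\lambda}^{2/(n+2)}\, r(z).
\end{equation}
Setting $\lambda=w^{n+2}$ locally, one computes $(\sqrt{-1}\partial\conj{\partial}\bm{\rho})^{n+2}$ in the variables $(w,z)$ and compares it with $\bm{v}$. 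This gives
\begin{equation}
(\sqrt{-1}\partial\conj{\partial}\bm{\rho})^{n+2}=-k_n\,J(r)\,\bm{v},\qquad J(r)=(-1)^{n+1}\det\begin{pmatrix} r & r_{\conj{j}}\\ r_i & r_{i\conj{j}}\end{pmatrix},
\end{equation}
where the constant $k_n$ is fixed by the normalization of $\bm{v}$. The equation therefore becomes $J(r)=1+O(r^{n+2})$.

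Next, suppose the frame is changed by a nonvanishing holomorphic function $f$. Then $r$ is replaced by $\abs{f}^{-2/(n+2)}r$, and I will check that $J(\abs{f}^{2/(n+2)}r)=\abs{f}^2J(r)$. This transformation law is exactly what makes the condition intrinsic on densities. The $(n+2)$nd root ambiguity only affects $\lambda^{1/(n+2)}$, whereas $\abs{\lambda}^{2/(n+2)}$ is single valued, so no global obstruction arises.

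\textbf{Step 2: iteration.} Start from any defining density $\bm{\rho}_0$. Strict pseudoconvexity gives $J>0$ near $M$, so $\bm{\rho}_1=J(r_0)^{-1/(n+2)}\bm{\rho}_0$ satisfies $J=1+O(\bm{\rho})$. The factor $J(r_0)$ is a function, because it is a ratio of two densities of the same weight, so $\bm{\rho}_1$ is again a global density.

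Inductively, suppose $J(r)=1+\eta\, r^s$ with $1\le s\le n+1$. The key lemma is the linearization formula
\begin{equation}
J\bigl(r(1+\phi r^s)\bigr)=J(r)\bigl(1+s(n+2-s)\phi\, r^s\bigr)+O(r^{s+1}).
\end{equation}
I would prove it by expanding the determinant along the first row and column, noting that only terms involving $r_ir_{\conj{j}}\phi$ and $\phi r$ survive at order $r^s$, and using $J(r)=1$ on $M$. Choosing
\begin{equation}
\phi=-\frac{\eta}{s(n+2-s)}
\end{equation}
raises the order to $s+1$. Since $\eta$ is a density ratio, it is a global function, so the corrected $\bm{\rho}$ is again a global density. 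After the step with $s=n+1$ we reach the error $O(\bm{\rho}^{n+2})$.

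\textbf{Step 3: uniqueness.} Let $\bm{\rho}$ and $\bm{\rho}'$ both be solutions. Then $\bm{\rho}'=\bm{\rho}(1+\phi\bm{\rho}^s)$ for some $s\ge1$ with $\phi|_M\neq0$, or else $\bm{\rho}'=\bm{\rho}\bigl(1+O(\bm{\rho}^{n+2})\bigr)$. In the first case with $s\le n+1$, the linearization formula gives $J(r')-J(r)=s(n+2-s)\phi r^s+O(r^{s+1})$. Since $s(n+2-s)\neq0$, this contradicts $J(r)=J(r')=1+O(r^{n+2})$. Hence $\bm{\rho}'\equiv\bm{\rho}$ modulo $O(\bm{\rho}^{n+3})$.

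\textbf{Main obstacle.} The delicate part is the determinant computation in the linearization formula. I must track carefully which terms are $O(r^{s+1})$, and in particular verify that the coefficient is exactly $s(n+2-s)$. This coefficient vanishes at $s=n+2$, which is why the construction stops there. I would first test the formula on the model case $r=\abs{z}^2-1$, or use Fefferman's original computation, to fix the constants before doing the general expansion.
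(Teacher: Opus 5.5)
The paper gives no proof of this statement; it quotes it from \cite{hirachi_q-prime_2014}, where it rests on Fefferman's iterative construction rewritten for densities. Your architecture is exactly that argument: local reduction via $\lambda=w^{n+2}$, the normalization $\bm{\rho}_1=J^{-1/(n+2)}\bm{\rho}_0$, order-by-order correction, and uniqueness from the same linearization. However, your key lemma is false as stated, and with your choice of $\phi$ the iteration does not advance. The correct formula is
\begin{equation*}
J\bigl(r(1+\phi r^s)\bigr)=J(r)\bigl(1+(s+1)(n+2-s)\,\phi\, r^s\bigr)+O(r^{s+1}),
\end{equation*}
with $(s+1)(n+2-s)$ in place of $s(n+2-s)$.

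You can see your constant is wrong already at $s=0$. $J$ is homogeneous of degree $n+2$, so $J(r(1+\phi))=(1+\phi)^{n+2}J(r)$ has linear coefficient $n+2$, which is what you use in your own normalization step; your formula gives $0$.

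Here is a direct derivation. Put $A(r)=\begin{pmatrix} r & r_{\conj{j}}\\ r_i & r_{i\conj{j}}\end{pmatrix}$, which is linear in $r$, and $\tilde r=r+\phi r^{s+1}$. Modulo $O(r^{s+1})$, the difference $A(\tilde r)-A(r)$ has three parts:
\begin{itemize}
\item the rank-one piece $s(s+1)\phi r^{s-1}ee^*$ with $e=(0,r_i)$;
\item the piece $(s+1)\phi r^sA(r)$;
\item terms whose $A(r)^{-1}$-trace is $O(r^{s+1})$.
\end{itemize}
Since $A(r)^{-1}e=e_0-rA(r)^{-1}e_0$, the rank-one piece multiplies the determinant by $1-s(s+1)\phi r^s+O(r^{s+1})$. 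The second piece multiplies it by $1+(s+1)(n+2)\phi r^s$. The net coefficient is $(s+1)(n+2-s)$; the model $r=\abs{z}^2-1$ with constant $\phi$ confirms this.

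With your choice $\phi=-\eta/(s(n+2-s))$, the new error is $-\eta r^s/s+O(r^{s+1})$, so the order does not improve. The repair is $\phi=-\eta/((s+1)(n+2-s))$, that is,
\begin{equation*}
\bm{\rho}\mapsto\bm{\rho}\Bigl(1+\frac{1-J}{(s+1)(n+2-s)}\Bigr).
\end{equation*}
Since $(s+1)(n+2-s)\neq0$ for $0\le s\le n+1$ and vanishes at $s=n+2$, existence up to $O(\bm{\rho}^{n+2})$ and the uniqueness induction both go through. So this is a fixable computational error, not a wrong approach.

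Some smaller points.

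\emph{Sign of $J$.} With the paper's convention $\bm{\rho}<0$ on $\Omega$, your substitution gives $(\sqrt{-1}\partial\conj{\partial}\bm{\rho})^{n+2}=k_n\det A(r)\,\bm{v}$; for $r=\abs{z}^2-1$ one has $\det A=-1$. So the normalized operator is $J(r)=-\det A(r)$. The factor $(-1)^{n+1}$ is Fefferman's sign for defining functions that are positive inside. With it, your displayed identity and the claim $J>0$ near $M$ are off by $(-1)^n$.

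\emph{Uniqueness at order $s=0$.} Your uniqueness step omits this case: you implicitly assume $\bm{\rho}'/\bm{\rho}=1$ on $M$. Write $\bm{\rho}'=\psi\bm{\rho}$ with $\psi>0$. Then $J(\psi r)=\psi^{n+2}J(r)$ on $M$ forces $\psi|_M=1$, and only then does your argument for $s\ge1$ apply.

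\emph{$\eta$ is not a function.} In $J=1+\eta\bm{\rho}^s$, $\eta$ is a density of weight $(-s,-s)$. What is global is $J-1$, which is why the correction factor above is well defined.

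\emph{Step 1 check.} The transformation-law check in Step 1 is harmless but unnecessary. Both sides of the equation are intrinsic forms on $\mathcal{K}^\times$ of the same homogeneity, so their ratio is automatically a function on $X$.
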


Now, assume that the boundary $M=\partial\Omega$ admits a pseudo-Einstein contact form $\theta$.
Recall that $\theta$ induces a metric $h_\theta$ on $\mathcal{E}(1,0)$ whose associated metric connection $D^\theta$ is flat; see Theorem \ref{thm:pseudo_Einstein}.
In this setting, we have the following result.

\begin{thm}[\cite{hirachi_variation_2017}]
    \label{thm:Hislop}
    If the boundary $M$ admits a pseudo-Einstein contact form $\theta$, then $h_\theta$ extends to a metric $\widetilde{h}_\theta \in \Gamma\widetilde{\mathcal{E}}(-1,-1)$ on $\widetilde{\mathcal{E}}(1,0)$ that is flat near the boundary.
    The flat connection $D^\theta$ is obtained by restricting the Chern connection of $\widetilde{h}_\theta$.
\end{thm}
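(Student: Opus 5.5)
The plan is to build $\widetilde{h}_\theta$ from local holomorphic unit frames of $\mathcal{K}$ near $M$, obtained by extending the closed unit sections of $\mathcal{K}_M$ given by Theorem \ref{thm:pseudo_Einstein}(2) into $\Omega$. It is enough to work with $h_\theta^{-n-2}$ on $\mathcal{K}_M=\mathcal{E}(-n-2,0)$. A metric on $\widetilde{\mathcal{E}}(1,0)$ is then obtained by taking the positive $(n+2)$nd root, which commutes with restriction to $M$ and preserves flatness.

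\textbf{Step 1: a flat unitary atlas for $\mathcal{K}_M$.} Since $\theta$ is pseudo-Einstein, Theorem \ref{thm:pseudo_Einstein}(2) gives an open cover $\{U_j\}$ of $M$ and closed sections $\zeta_j$ of $\mathcal{K}_M|_{U_j}$ of unit length for $h_\theta^{-n-2}$. On overlaps write $\zeta_j=u_{jk}\zeta_k$. Then $0=d\zeta_j=du_{jk}\wedge\zeta_k$, and since $\zeta_k$ spans $\wedge^{n+1}(T^{0,1}M)^\perp$, this forces $\conj{\partial}_b u_{jk}=0$, so $u_{jk}$ is a CR function. It also satisfies $\abs{u_{jk}}=1$. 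Locally $u_{jk}=e^{\sqrt{-1}\varphi}$ with $\varphi$ real and $\conj{\partial}_b\varphi=0$, hence also $\partial_b\varphi=0$. Because $HM$ is bracket generating (by strict pseudoconvexity), $\varphi$ is locally constant. Thus the transition functions are locally constant $\mathit{U}(1)$-valued.

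\textbf{Step 2: extension into the domain.} Locally $\zeta_j=f_j\,dz^1\wedge\cdots\wedge dz^{n+1}|_M$ with $f_j$ a CR function. By the Lewy extension theorem on the pseudoconvex side, $f_j$ extends to a holomorphic function on $W_j\cap\conj{\Omega}$, smooth up to $M$, for a neighborhood $W_j\subset X$ of $U_j$. This gives holomorphic $(n+1,0)$-forms $\widetilde{\zeta}_j$ that are nonvanishing after shrinking $W_j$ toward $M$.

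By uniqueness of holomorphic extension, $\widetilde{\zeta}_j=u_{jk}\widetilde{\zeta}_k$ with the same constants $u_{jk}$. Hence declaring $\abs{\widetilde{\zeta}_j}=1$ defines a consistent Hermitian metric on $\mathcal{K}$ over a one-sided collar $\conj{\Omega}\cap W$ of $M$. Its Chern connection form in the frame $\widetilde{\zeta}_j$ is $\partial\log\abs{\widetilde{\zeta}_j}^2=0$, so the metric is flat there. Finally, extend it to all of $\conj{\Omega}$ by a partition of unity; positive metrics form a convex set, so no obstruction arises, and the metric is unchanged near $M$. On $M$ the metric restricts to $h_\theta^{-n-2}$ by construction.

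\textbf{Step 3: identification of the restricted connection.} The restriction of the Chern connection to $M$ is the flat metric connection on $\mathcal{K}_M$ for which every $\zeta_j$ is parallel. It remains to show that $D^\theta$ (more precisely, its power on $\mathcal{K}_M$) also makes closed unit sections parallel. I expect this to be the main obstacle; the previous steps are standard.

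My approach is to write the connection form of $D^\theta$ in the frame $\zeta_j$ as $\eta_j=\eta_\alpha\theta^\alpha+\eta_{\conj{\alpha}}\theta^{\conj{\alpha}}+\eta_0\theta$. There are three constraints:
\begin{enumerate}
\item Unitarity of $D^\theta$ together with $\abs{\zeta_j}=1$ gives $\eta_j\in\sqrt{-1}\,\mathbb{R}$.
\item The Tanaka--Webster $(0,1)$-part agrees with $\conj{\partial}_b$ on $\mathcal{K}_M$, and $\zeta_j$ is CR-holomorphic, so $\eta_{\conj{\alpha}}=0$. By the reality in (1), $\eta_\alpha=0$ as well.
\item The $T$-component $\eta_0$ must be computed from $\mathcal{L}_T\zeta_j$, using $d\zeta_j=0$, the structure equations \eqref{eq:str.eq.forTW_1}, \eqref{eq:str.eq.forTW_2}, and the normalization \eqref{eq:volume-renormalization}. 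The target is that the Tanaka--Webster term $\nabla^{TM}_T\zeta_j$ is exactly cancelled by the correction $-\frac{\sqrt{-1}}{n(n+2)}R\theta$ in \eqref{eq:TanakaConn}, raised to the appropriate power on $\mathcal{K}_M$, giving $\eta_0=0$.
\end{enumerate}
Alternatively, for step (3) one can argue via the equivalence (1)$\Leftrightarrow$(2) in Theorem \ref{thm:pseudo_Einstein}. Both connections are flat, unitary and have $(0,1)$-part $\conj{\partial}_b$, so their difference is a closed real multiple of $\sqrt{-1}\,\theta$. A closed form $f\theta$ satisfies $df\wedge\theta+f\,d\theta=0$, and since $d\theta$ is nondegenerate on $HM$ this forces $f=0$. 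This second argument avoids the explicit computation, and I would try it first.
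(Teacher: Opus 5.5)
Your proof is correct. The paper gives no argument of its own for this statement; it imports it from \cite{hirachi_variation_2017}, so there is no internal proof to compare against. Your route is the natural one. Closed unit sections of $\mathcal{K}_M$ have locally constant unimodular transition functions, and Lewy's theorem extends them to holomorphic sections of $\mathcal{K}$ on the pseudoconvex side, which you declare to have unit length. This correctly gives a metric that is flat only on the $\Omega$-side of $M$. That is the only reading of ``flat near the boundary'' that can hold in general, since CR functions extend only to the pseudoconvex side, and it is all the paper uses later: everything lives over $\conj{\Omega}$, cf.\ Corollary~\ref{thm:change_of_pseudo_Einstein}.

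In Step~3 your second argument is the right choice and makes the computation of $\eta_0$ unnecessary. The one input it needs is that the connection induced by $D^\theta$ on $\mathcal{K}_M$ annihilates closed sections in $T^{0,1}$-directions. Here is why this holds. Put $\zeta_0=\theta\wedge\theta^1\wedge\cdots\wedge\theta^n$. By \eqref{eq:str.eq.forTW_1} and \eqref{eq:str.eq.forTW_2}, both $\nabla^{TM}(f\zeta_0)=(df-f\,\omega_{\gamma}^{\,\,\,\gamma})\otimes\zeta_0$ and $d(f\zeta_0)=(df-f\,\omega_{\gamma}^{\,\,\,\gamma})\wedge\zeta_0$ hold. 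Only the $T^{0,1}$-component of that one-form survives the wedge, and the correction term in \eqref{eq:TanakaConn} is a multiple of $\theta$. Hence $D^\theta\zeta_j=\sqrt{-1}f\theta\otimes\zeta_j$ with $f$ real, and flatness forces $f=0$. Passing between $\mathcal{K}_M$ and $\mathcal{E}(1,0)$ is harmless, because a connection on a line bundle is determined by the one it induces on any nonzero tensor power.

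The remaining details are routine but worth stating. Lewy extension is local, so you should refine to a finite cover. You should also use a one-sided collar of uniform depth in which every component of $W_j\cap W_k\cap\Omega$ abuts $U_j\cap U_k$. Boundary uniqueness then gives $\widetilde{\zeta}_j=u_{jk}\widetilde{\zeta}_k$.
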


For a Fefferman defining density $\bm{\rho}$, the defining function $\rho = \widetilde{h}_\theta\bm{\rho}$ is called the \emph{Fefferman defining function}.
The associated contact form $\frac{\sqrt{-1}}{2}(\conj{\partial}-\partial)\rho$ recovers $\theta$ \cite{farris_intrinsic_1986}.
The Fefferman defining function admits a direct definition in terms of the complex Monge--Ampère equation; see \cite{fefferman_monge-ampere_1976,hislop_cr-invariants_2006}.

\begin{cor}
    If $\widehat{\theta} = e^\Upsilon \theta$ is another pseudo-Einstein contact form, then the function $\widetilde{\Upsilon} = \log(\widetilde{h}_{\widehat{\theta}}/\widetilde{h}_\theta)$ is an extension of $\Upsilon$ over $\Omega$ that is pluriharmonic near the boundary.
    \label{thm:change_of_pseudo_Einstein}
\end{cor}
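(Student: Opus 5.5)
The claim is that $\widetilde{\Upsilon}=\log(\widetilde{h}_{\widehat{\theta}}/\widetilde{h}_\theta)$ restricts to $\Upsilon$ on $M$ and is pluriharmonic near $M$. The plan is to work entirely with the two flat extensions supplied by Theorem \ref{thm:Hislop}, and to handle the boundary and interior parts separately.

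\emph{Boundary values.} Both $\widetilde{h}_\theta$ and $\widetilde{h}_{\widehat{\theta}}$ are sections of the real line bundle $\widetilde{\mathcal{E}}_{\mathbb{R}}(-1,-1)$ and are positive, so their quotient is a positive function on $\conj{\Omega}$ and $\widetilde{\Upsilon}$ is a well-defined real function. On $M$ they restrict to $h_\theta$ and $h_{\widehat{\theta}}$. By the canonical isomorphism $\mathcal{E}_{\mathbb{R}}(1,1)\simeq TM/HM$, a contact form is the same thing as a positive section of $\mathcal{E}_{\mathbb{R}}(-1,-1)$, and this identification is linear. Hence $h_{e^\Upsilon\theta}=e^\Upsilon h_\theta$, which gives $\widetilde{\Upsilon}|_M=\Upsilon$. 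This is consistent with the normalization \eqref{eq:volume-renormalization}: the unit sections rescale as $\widehat{\zeta}=e^{-\Upsilon/2}\zeta$, matching $\widehat{c}=e^{-\Upsilon/2}c$.

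\emph{Pluriharmonicity.} Near $M$ both metrics are flat Hermitian metrics on the same holomorphic line bundle $\widetilde{\mathcal{E}}(1,0)$. If only a local $(n+2)$nd root exists, I will work locally; the quantities involved are invariant under the ambiguity by roots of unity. Take a local nonvanishing holomorphic section $\sigma$ of $\widetilde{\mathcal{E}}(1,0)$. The Chern curvature of a metric $h$ is $-\partial\conj{\partial}\log h(\sigma,\sigma)$, so flatness gives $\partial\conj{\partial}\log \widetilde{h}_\theta(\sigma,\sigma)=0$, and likewise for $\widehat{\theta}$. Subtracting these, with $\widetilde{\Upsilon}=\log \widetilde{h}_{\widehat{\theta}}(\sigma,\sigma)-\log \widetilde{h}_\theta(\sigma,\sigma)$ independent of the choice of $\sigma$, yields $\partial\conj{\partial}\widetilde{\Upsilon}=0$ on the common neighbourhood of $M$ where both metrics are flat. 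Thus $\widetilde{\Upsilon}$ is pluriharmonic there.

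\emph{Main obstacle.} The only real subtlety is making sure the boundary identification $h_{e^\Upsilon\theta}=e^\Upsilon h_\theta$ carries the correct weight. If $h$ were instead linked to a power of $\theta$, the restriction would come out as a multiple of $\Upsilon$. I will settle this by checking \eqref{eq:volume-renormalization} directly. The left-hand side scales by $e^{(n+1)\Upsilon}$. On the right-hand side, a unit section $\zeta$ of $\mathcal{E}(1,0)$ with respect to $h_\theta$ has $\zeta^{-n-2}$ scaling as $e^{(n+2)\Upsilon/2}$ times its $\conj{\zeta}$ counterpart, and $\theta$ contributes a further factor $e^{\Upsilon}$. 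This forces $\abs{\zeta}$-normalization $e^{-\Upsilon/2}$ on each of $\zeta,\conj{\zeta}$, that is, $h_{\widehat{\theta}}=e^{\Upsilon}h_\theta$ on $\mathcal{E}(1,0)$. Hence the weight is exactly one, as required.
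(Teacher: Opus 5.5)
Your proposal is correct and follows the reasoning the paper leaves implicit by stating this as an immediate corollary of Theorem~\ref{thm:Hislop}: the logarithm of the ratio of two metrics on $\widetilde{\mathcal{E}}(1,0)$ that are both flat near $M$ is pluriharmonic there, and it restricts to $\Upsilon$ because $h_\theta$ is just $\theta$ transported through the linear isomorphism $\mathcal{E}_{\mathbb{R}}(-1,-1)\simeq HM^\perp$, so $h_{e^\Upsilon\theta}=e^\Upsilon h_\theta$. One small slip in your redundant cross-check via \eqref{eq:volume-renormalization}: you left out that the Reeb field also changes, and since $\theta(\widehat{T})=e^{-\Upsilon}$, the combined factor from $\widehat{\theta}$ and the two contractions by $\widehat{T}$ is $e^{-\Upsilon}$ rather than $e^{\Upsilon}$; only with this correction does $e^{(n+2)\Upsilon}\cdot e^{-\Upsilon}$ balance $e^{(n+1)\Upsilon}$.
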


\subsection{Renormalized connection}
The Monge--Ampère operator and the associated complex Monge--Ampère equation were introduced by Fefferman \cite{fefferman_monge-ampere_1976} in the construction of complete Kähler--Einstein metrics on bounded strictly pseudoconvex domains in complex Euclidean space.
Since such metrics are complete, their Chern connections diverge at the boundary.
Burns and Epstein \cite{burns_characteristic_1990} introduced a renormalization procedure that subtracts a divergent part of the Chern connection.
Marugame \cite{marugame_renormalized_2016,marugame_renormalized_2021} formulated this procedure intrinsically for strictly pseudoconvex domains in complex manifolds.
We briefly review this theory.

Let $X$ be a complex manifold of complex dimension $n+1$, and let $\Omega \subset X$ be a relatively compact strictly pseudoconvex domain whose boundary $M = \partial \Omega$ admits a pseudo-Einstein structure.
Fix a Fefferman defining function $\rho$ of $\Omega$ and set $\theta = \frac{\sqrt{-1}}{2}(\conj{\partial}-\partial)\rho|_{TM}$.
Let $g$ be a Hermitian metric on $\Omega$ that agrees, near the boundary, with the complete Kähler metric $-\sqrt{-1}\partial\conj{\partial}\log(-\rho)$ determined by $\rho$.
Let $\nabla^g$ be the Chern connection of $g$.

The divergent part of $\nabla^g$ is described by 
\begin{equation}
    Y_{i}^{\,\,\,j} = -\frac{1}{\rho}(\delta_{i}^{\,\,\,j}\rho_k + \delta_{k}^{\,\,\,j}\rho_i)\theta^k,
\end{equation}
where $(\theta^i)$ is an arbitrary $(1,0)$-coframe of $\Omega$ and $\partial\rho = \rho_i\theta^i$.
Then the connection $\conj{\nabla}^g = \nabla^g - Y$ on $T^{1,0}\Omega$ extends smoothly to a connection on $T^{1,0}\conj{\Omega}|_M$, and is called the \emph{renormalized connection} associated with $\rho$.
One can also describe the divergent part of the curvature tensor $R^g$ of $\nabla^g$ by 
\begin{equation}
    K_{i}^{\,\,\,j} = -(\delta_{i}^{\,\,\,j}g_{k\conj{l}} + \delta_{k}^{\,\,\,j}g_{i\conj{l}})\theta^k \wedge \theta^{\conj{l}}.
\end{equation}
The endomorphism-valued $2$-form $W = R^g - K$ also extends smoothly to the boundary, and is called the \emph{renormalized curvature} associated with $\rho$.

The renormalized curvature form $W_{i}^{\,\,\,j}$ and the curvature form $\Theta_{i}^{\,\,\,j}$ of the renormalized connection are related by 
\begin{equation}
    \Theta_{i}^{\,\,\,j} = W_{i}^{\,\,\,j} + u_i \wedge \theta^j, \quad u_i = \frac{\rho_{ik}}{\rho}\theta^k,
    \label{eq:renormalized_curvatures}
\end{equation}
where the subscripts on $\rho$ denote covariant derivatives by $\conj{\nabla}^g$.
In particular, $W_{i}^{\,\,\,j}$ is exactly the $(1,1)$-part of $\Theta_{i}^{\,\,\,j}$.

Since both $R^g$ and $K$ satisfy the usual Kähler symmetry relations, the same holds for $W$.
Moreover, the trace of $W$ vanishes to high order near the boundary \cite{marugame_renormalized_2016}:
\begin{equation}
\tr_g W = \Ric (g) + (n+2)g = O(\rho^n).
\end{equation}
Equivalently, the complete Kähler metric $g$ is approximately Einstein near the boundary.

\subsection{Ambient metric construction}
When a CR manifold arises as the boundary of a strictly pseudoconvex domain, the normal tractor connection can be recovered from the complex geometry of the domain \cite{hirachi_q-prime_2014,case_p-operator_2020}.
This method is known as the \emph{ambient metric construction}.
In the present setting, the flat connection $D^\theta$ also admits an ambient construction: it is the restriction to the boundary of the Chern connection of the flat Hermitian metric $\widetilde{h}_\theta$ on $\widetilde{\mathcal{E}}(1,0)$. 
Hence the tensor product connection
\begin{equation}
    \nabla^{\mathcal{T}}
    =
    D^\theta\otimes\nabla^{\mathcal{T}^{\#}}
\end{equation}
admits an ambient construction as well.

For our later use of obstruction theory, we need a more precise form of this construction. 
Namely, we need a simultaneous ambient construction of the pair of connections $(\nabla^{\mathcal{T}},\nabla^{\underline{\mathcal{T}}})$.
Following Marugame \cite[Section 6]{marugame_renormalized_2021}, we give a direct computational proof that identifies this pair with the boundary restriction of a natural pair of connections over $\conj{\Omega}$.

Let $X$ be a complex manifold of complex dimension $n+1$, and let $\Omega \subset X$ be a relatively compact strictly pseudoconvex domain whose boundary $M = \partial \Omega$ admits a pseudo-Einstein structure.
Fix a Fefferman defining function $\rho$ of $\Omega$ and set $\theta = \frac{\sqrt{-1}}{2}(\conj{\partial}-\partial)\rho|_{TM}$.
Let $\widetilde{h}_\theta \in \Gamma \widetilde{\mathcal{E}}(-1,-1)$ be the metric on $\widetilde{\mathcal{E}}(1,0)$ given by Theorem \ref{thm:Hislop}.

We construct vector bundles over $\conj{\Omega}$ that extend $\mathcal{T}$ and $\underline{\mathcal{T}}$ from the boundary.
Consider the horizontal lift $\Ker \partial \log \widetilde{h}_\theta \subset T^{1,0}\widetilde{\mathcal{E}}(-1,0)^\times$ of $T^{1,0}\conj{\Omega}$ with respect to the Chern connection of $\widetilde{h}_\theta$, which we also denote by $T^{1,0}\conj{\Omega}$.
Let $\widetilde{Z}_0$ be the vertical $(1,0)$-vector field on $\widetilde{\mathcal{E}}(-1,0)^\times$ such that $\partial \log \widetilde{h}_\theta(\widetilde{Z}_0) = 1$.
Then we have a decomposition 
\begin{equation}
    T^{1,0}\widetilde{\mathcal{E}}(-1,0)^\times = \langle \widetilde{Z}_0 \rangle \oplus T^{1,0}\conj{\Omega}.
    \label{eq:decomposition_Ttilde}
\end{equation}

Let $\widetilde{\mathcal{T}}\to\conj{\Omega}$ be the vector bundle whose sections are the $\mathbb{C}^{\times}$-invariant sections of $T^{1,0}\widetilde{\mathcal{E}}(-1,0)^\times$.
Since the choice of $(n+2)$nd root $\widetilde{\mathcal{E}}(-1,0)$ of the canonical bundle $\mathcal{K}$ is determined up to the action of $\{ c \in \mathbb{C}^\times \mid c^{n+2} = 1 \}$, the bundle $\widetilde{\mathcal{T}}$ is globally well-defined.
Moreover, the above decomposition of $T^{1,0}\widetilde{\mathcal{E}}(-1,0)^\times$ is clearly $\mathbb{C}^{\times}$-invariant and therefore descends to a decomposition of $\widetilde{\mathcal{T}}$.
For a local unitary frame $\bm{Z} \in \mathcal{F}_\theta(T^{1,0}M)$, $(\widetilde{Z}_0,\bm{Z},\xi)$ is a local frame of $\widetilde{\mathcal{T}}|_M$, where $\xi$ is the $(1,0)$-vector field determined by \eqref{eq:xi}.

Let $\bm{\rho} = \widetilde{h}_\theta^{-1}\rho$ be the Fefferman defining density and consider a Lorentz--Hermitian metric $\widetilde{h}$ on $\widetilde{\mathcal{T}}$ that agrees, near the boundary, with $\sqrt{-1}\widetilde{h}_\theta\partial\conj{\partial}\bm{\rho}$.
Let $\nabla^{\widetilde{h}}$ be the corresponding Chern connection.

\begin{thm}[cf. {\cite[Theorem 6.4]{marugame_renormalized_2021}}]
    \label{thm:ambient_construction}
    The map sending the local frame $(\widetilde{Z}_0,\bm{Z},\xi)$ of $\widetilde{\mathcal{T}}|_M$ to the local frame $\sigma'_\theta(\bm{Z})$ of $\mathcal{T}$ defines a well-defined isomorphism $\widetilde{\mathcal{T}}|_M \simeq \mathcal{T}$.
    This isomorphism preserves the decompositions \eqref{eq:decomposition_Ttilde} and \eqref{eq:decomposition_T}.
    Under this isomorphism, $\nabla^{\mathcal{T}}$ corresponds to the restriction of  $\nabla^{\widetilde{h}}$, and $\nabla^{\underline{\mathcal{T}}}$ corresponds to the restriction of $\conj{\nabla}^g$.
\end{thm}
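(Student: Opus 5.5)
The argument is a direct frame computation at the boundary, following the structure of Marugame's proof of \cite[Theorem 6.4]{marugame_renormalized_2021}. It has three stages: well-definedness of the isomorphism, identification of the full connection, and identification of the reduced connection.

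\textbf{Well-definedness.} Under a change $\bm{Z}\mapsto\bm{Z}u$ with $u\in\mathit{U}(n)$, the frame $(\widetilde{Z}_0,\bm{Z},\xi)$ changes by $\mathrm{diag}(1,u,1)$. Here $\widetilde{Z}_0$ is the vertical field fixed by $\partial\log\widetilde{h}_\theta$, and $\xi$ is fixed by \eqref{eq:xi}. By construction, $\sigma'_\theta(\bm{Z})$ transforms by the same matrix. Hence the assignment of frames glues to a bundle isomorphism $\widetilde{\mathcal{T}}|_M\simeq\mathcal{T}$. Since $\bm{Z},\xi$ span the horizontal lift $T^{1,0}\conj{\Omega}|_M$, and the last $n+1$ vectors of $\sigma'_\theta(\bm{Z})$ span $\underline{\mathcal{T}}$ (with $Z_0$ going to $\widetilde{Z}_0$), the decompositions \eqref{eq:decomposition_Ttilde} and \eqref{eq:decomposition_T} correspond. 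At this stage I would also record that the isomorphism does not depend on the Fefferman defining function, since $\rho$ is unique modulo $O(\rho^{n+3})$.

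\textbf{The full connection $\nabla^{\widetilde{h}}$.} Work on $\widetilde{\mathcal{E}}(-1,0)^\times$ near $M$, with fiber coordinate $c$ relative to a local flat unit section of $\widetilde{h}_\theta^{-1}$. This section exists by Theorem \ref{thm:Hislop}, and in it $\partial\log\widetilde{h}_\theta$ becomes $dc/c$. Then $\widetilde{h}_\theta\bm{\rho}=\abs{c}^2\rho$ in suitable normalization. Taking the coframe $dc/c,\theta^\alpha,\partial\rho$ extended off $M$ dual to $(\widetilde{Z}_0,\bm{Z},\xi)$, the metric $\widetilde{h}$ has the block form
\begin{equation}
    \widetilde{h}=\begin{pmatrix} \rho & 0 & 1\\ 0 & h_{\alpha\conj{\beta}} & \ast\\ 1 & \ast & \ast\end{pmatrix}
\end{equation}
up to the standard normalizations. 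I would compute this via the usual formula $\partial\conj{\partial}\rho$ in the adapted frame, using \eqref{eq:xi}, \eqref{eq:N_and_T}, $\xi\intprod\partial\conj{\partial}\rho=\kappa\conj{\partial}\rho$, and $\partial\conj{\partial}\rho|_{H}=l_\theta$. The Chern connection form is then $\partial\widetilde{h}\cdot\widetilde{h}^{-1}$. Restricting to $M$ and expressing the derivatives of $\kappa$ and $\partial\conj{\partial}\rho$ through Tanaka--Webster quantities, following Graham--Lee and Marugame, should reproduce the matrix \eqref{eq:exact_Weyl+pseudo_Einstein}. The entries to check are $\theta^\alpha$ and $\sqrt{-1}\theta$ in the first column, $-\theta_\beta$ in the bottom row, and the torsion and $\kappa$ entries elsewhere. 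In that comparison I would use two facts: on the boundary, the transverse curvature of a Fefferman defining function equals $2P/n$ (the pseudo-Einstein condition \eqref{eq:pseudo_Einstein} and the Monge--Ampère equation give exactly this); and the $\mathbb{C}^\times$-direction is flat because $D^\theta$ is flat. The first-column vanishing corresponds to $\widetilde{Z}_0$ being parallel up to horizontal terms, which reflects the homogeneity of $\bm{\rho}$.

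\textbf{The reduced connection.} Project $\nabla^{\widetilde{h}}$ onto $T^{1,0}\conj{\Omega}$ along $\widetilde{Z}_0$. In the interior this projection should equal $\nabla^g-Y$. The reason is that the horizontal block of $\widetilde{h}$ is $\abs{c}^2(\partial\conj{\partial}\rho)$ corrected by a rank-one term, and $g=-\partial\conj{\partial}\log(-\rho)=\partial\conj{\partial}\rho/(-\rho)+\partial\rho\wedge\conj{\partial}\rho/\rho^2$. Comparing Chern connections, the $1/\rho$ terms are precisely $Y_i^{\,\,\,j}$. This is the Burns--Epstein/Marugame observation, and the formal identity of connection forms then extends to $\conj{\Omega}$. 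Restricting to $M$ gives agreement with the lower-right block of \eqref{eq:exact_Weyl+pseudo_Einstein}, that is, with $\nabla^{\underline{\mathcal{T}}}$.

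I expect the main obstacle to be the boundary computation of the $\theta$-coefficients in the top row: $\sqrt{-1}\kappa_\beta\theta$ and $\tfrac12\kappa_T+\sqrt{-1}(\tfrac1{2n}\Delta_b\kappa-\tfrac1n\abs{A}^2)$. These require second-order information on $\rho$ coming from the Monge--Ampère equation. For them I would import the corresponding expansion from \cite[Section 6]{marugame_renormalized_2021} rather than recompute it.
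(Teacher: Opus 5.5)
Your route is the paper's. The paper also treats well-definedness as evident and takes the block structure from \cite[Proposition 4.2]{marugame_renormalized_2021}. In the frame $(\widetilde{Z}_0,\bm{Z},\xi)$, the connection form of $\nabla^{\widetilde{h}}$ has first column $(0,\theta^\alpha,\partial\rho)$, top row $u_i=\rho_{ik}\theta^k/\rho$, and lower-right block equal to the renormalized connection form $\theta_i^{\,\,\,j}$; the last fact is your projection statement. The paper then matches this with \eqref{eq:exact_Weyl+pseudo_Einstein} via \cite[Proposition 3.5 and (4.12)--(4.15)]{marugame_renormalized_2016}. Identifying the blocks first is slightly more economical than your order. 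All entries except the top row then follow from the known boundary formula for $\conj{\nabla}^g$. Only $u_i|_M$ needs the Monge--Amp\`ere input you anticipated, and your value $\kappa|_M=2P/n$ is correct.

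One step fails as written: ``the Chern connection form is then $\partial\widetilde{h}\cdot\widetilde{h}^{-1}$.'' That formula holds only in a holomorphic frame, and $(\widetilde{Z}_0,\bm{Z},\xi)$ is not one, since $\bm{Z}$ and $\xi$ are not holomorphic. Taken literally, it produces $(1,0)$-forms, whose restrictions to $M$ lie in the span of $\theta,\theta^\alpha$. It could therefore never give entries such as $-\theta_\beta$ or $-\sqrt{-1}A^{\alpha}_{\,\,\,\conj{\sigma}}\theta^{\conj{\sigma}}$.

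The fix is to work in the holomorphic frame $(c\partial_c,\partial_{z^i})$, where $c$ is the fiber coordinate relative to a local flat holomorphic unit section. In this frame $\bm{\rho}=\abs{c}^2\rho$. (Note that $\widetilde{h}_\theta\bm{\rho}=\rho$ by definition, so your $\widetilde{h}_\theta\bm{\rho}=\abs{c}^2\rho$ is a slip.) The metric $\widetilde{h}$ then has entries $\rho,\rho_{\conj{j}},\rho_i,\rho_{i\conj{j}}$, which depend only on $z$. The rank-one correction you mention sits in its Schur complement, not in the horizontal block itself: $\rho_{i\conj{j}}-\rho_i\rho_{\conj{j}}/\rho=-\rho g_{i\conj{j}}$. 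With $\partial\widetilde{h}\cdot\widetilde{h}^{-1}$ in this frame, three facts follow directly:
\begin{itemize}
\item the Christoffel symbols projected along $c\partial_c$ are $\Gamma^{g,k}_{ij}+\rho^{-1}(\rho_i\delta_j^{\,\,\,k}+\rho_j\delta_i^{\,\,\,k})$, which is exactly $\nabla^g-Y$;
\item the top row is $u_j$ (with $\rho_{ij}$ the $\conj{\nabla}^g$-covariant Hessian);
\item the first column is the identity.
\end{itemize}
A gauge change by $\mathrm{diag}(1,a)$ to $(\widetilde{Z}_0,\bm{Z},\xi)$ preserves this block structure. Your argument that the identity extends from a one-sided neighbourhood to $M$ by continuity then completes the reduced-connection claim.
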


\begin{proof}
    It is clear that the map gives a well-defined isomorphism preserving the decompositions; it remains to show that it identifies the connections.
    By the same computation as in the proof of \cite[Proposition 4.2]{marugame_renormalized_2021}, the connection form of $\nabla^{\widetilde{h}}$ with respect to the frame $(\widetilde{Z}_0,\bm{Z},\xi)$ around a boundary point is given by 
    \begin{equation}
        \begin{pmatrix}
            0& u_\beta& u_{n+1}\\
            \theta^\alpha& \theta_{\beta}^{\,\,\,\alpha}& \theta_{n+1}^{\,\,\,\alpha}\\
            \partial \rho& \theta_{\beta}^{\,\,\,n+1}& \theta_{n+1}^{\,\,\,n+1}
        \end{pmatrix},
    \end{equation}
    where the forms $u_i$ are given in \eqref{eq:renormalized_curvatures} and $\theta_{i}^{\,\,\,j}$ denotes the connection form of the renormalized connection $\conj{\nabla}^g$.
    Using \cite[Proposition 3.5 and (4.12)--(4.15)]{marugame_renormalized_2016},
    we see that this matrix coincides with the connection form \eqref{eq:exact_Weyl+pseudo_Einstein} of $\nabla^{\mathcal{T}}$ with respect to the frame $\sigma'_\theta(\bm{Z})$. 
    It follows that $\nabla^{\mathcal{T}}$ and $\nabla^{\underline{\mathcal{T}}}$ are the restrictions of $\nabla^{\widetilde{h}}$ and $\conj{\nabla}^g$, respectively.
\end{proof}

\subsection{Proof of CR invariance}
We are now ready to prove the CR invariance of the CR Cheeger--Simons invariants using the ambient metric construction.
\begin{proof}[Proof of Theorem \ref{thm:CR_invariance}]
    Let $\widehat{\theta}$ be another pseudo-Einstein contact form and set $\widehat{\nabla}^{\mathcal{T}} = D^{\widehat{\theta}}\otimes \nabla^{\mathcal{T}^\#}$.
    Let us temporarily assume that $\mathcal{T}$ admits an $r_1=((n+2)-q_1+1)$-frame $\bm{s}_1$.
    Using \eqref{eq:productCS} and \eqref{eq:difference_form}, we have 
    \begin{equation}
    \begin{aligned}
        &\int_M c_{q_1}(\nabla^{\bm{s}_1},\widehat{\nabla}^{\mathcal{T}})\wedge(c_{q_2}\cdots c_{q_m})(\widehat{\nabla}^{\mathcal{T}}) - \int_M c_{q_1}(\nabla^{\bm{s}_1},\nabla^{\mathcal{T}})\wedge(c_{q_2}\cdots c_{q_m})(\nabla^{\mathcal{T}})\\
        &= \int_M (c_{q_1}\cdots c_{q_m})(\nabla^{\mathcal{T}},\widehat{\nabla}^{\mathcal{T}}).
    \end{aligned}
    \end{equation}
    Hence 
    \begin{equation}
    \begin{aligned}
        &\left(S_{c_{q_1}}(\widehat{\nabla}^{\mathcal{T}})*\cdots*S_{c_{q_m}}(\widehat{\nabla}^{\mathcal{T}})\right)(M) - \left(S_{c_{q_1}}(\nabla^{\mathcal{T}})*\cdots*S_{c_{q_m}}(\nabla^{\mathcal{T}})\right)(M)\\
        &= \int_M (c_{q_1}\cdots c_{q_m})(\nabla^{\mathcal{T}},\widehat{\nabla}^{\mathcal{T}}) + \mathbb{Z},\\
        &\left(\widetilde{S}^{\bm{s}_1}_{c_{q_1}}(\widehat{\nabla}^{\mathcal{T}})*\cdots*S_{c_{q_m}}(\widehat{\nabla}^{\mathcal{T}})\right)(M) - \left(\widetilde{S}^{\bm{s}_1}_{c_{q_1}}(\nabla^{\mathcal{T}})*\cdots*S_{c_{q_m}}(\nabla^{\mathcal{T}})\right)(M)\\
        &= \int_M (c_{q_1}\cdots c_{q_m})(\nabla^{\mathcal{T}},\widehat{\nabla}^{\mathcal{T}}).
    \end{aligned}
    \end{equation}
    The first formula remains valid even when $\mathcal{T}$ admits no $r_1$-frame; see \cite[Proposition 2.9]{cheeger_differential_1985}.
    Thus it remains to show that  
    \begin{equation}
        \int_M (c_{q_1}\cdots c_{q_m})(D^\theta \otimes \nabla^{\mathcal{T}^\#}, D^{\widehat{\theta}} \otimes \nabla^{\mathcal{T}^\#}) = 0.
    \end{equation}

    Set $\Phi = c_{q_1}\cdots c_{q_m}$.
    Let $\eta,\widehat{\eta}$ be the connection forms of $D^{\theta},D^{\widehat{\theta}}$, respectively. 
    Since $D^{\theta},D^{\widehat{\theta}}$ are flat, the curvature $\Omega^{(t)}$ of the homotopy of connections $\nabla^{(t)} = \left((1-t)D^\theta  + tD^{\widehat{\theta}}\right) \otimes \nabla^{\mathcal{T}^\#}$ coincides with the curvature $\Omega^{\mathcal{T}^\#}$ of $\nabla^{\mathcal{T}^\#}$.
    Therefore, by \eqref{eq:relativeCS}, 
    \begin{equation}
        \Phi(D^\theta \otimes \nabla^{\mathcal{T}^\#}, D^{\widehat{\theta}} \otimes \nabla^{\mathcal{T}^\#}) = (n+1)\widetilde{\Phi}\left((\widehat{\eta}-\eta)\delta_{B}^{\,\,\,A},\Omega^{\mathcal{T}^\#},\ldots,\Omega^{\mathcal{T}^\#}\right).
    \end{equation}
    When $\dim M=3$, the possible Chern monomials are $c_1\cdot c_1$ and $c_2$, and one computes  
    \begin{align}
        \widetilde{c_1\cdot c_1}\left((\widehat{\eta}-\eta)\delta_{B}^{\,\,\,A},\Omega^{\mathcal{T}^\#}\right) &= c_1\left((\widehat{\eta}-\eta)\delta_{B}^{\,\,\,A}\right) \wedge c_1(\Omega^{\mathcal{T}^\#}) = 0,\\
        \widetilde{c_2}\left((\widehat{\eta}-\eta)\delta_{B}^{\,\,\,A},\Omega^{\mathcal{T}^\#}\right) &= \frac{\sqrt{-1}}{2\pi}(\widehat{\eta}-\eta) \wedge c_1(\Omega^{\mathcal{T}^\#}) = 0,
    \end{align}
    so our assertion holds in this case.

    When $\dim M\ge 5$, Lempert's theorem realizes $M$ as the boundary of a relatively compact strictly pseudoconvex domain $\Omega$ in a Kähler manifold $X$ \cite[Theorem 8.1]{lempert_algebraic_1995}.
    Since $X$ is Kähler, the function $\widetilde{\Upsilon} = \log(\widetilde{h}_{\widehat{\theta}}/\widetilde{h}_\theta)$ from Corollary \ref{thm:change_of_pseudo_Einstein} can be taken to be pluriharmonic on the whole domain $\Omega$ \cite[Theorem 7.1]{hirachi_q-prime_2014}.
    Since $D^{\theta}$ and $D^{\widehat{\theta}}$ are the restrictions of the Chern connections of $(\widetilde{\mathcal{E}}(1,0),\widetilde{h}_\theta)$ and $(\widetilde{\mathcal{E}}(1,0),\widetilde{h}_{\widehat{\theta}})$, respectively, we have $\widehat{\eta}-\eta = \partial\log\widetilde{h}_{\widehat{\theta}}|_M - \partial\log\widetilde{h}_\theta|_M = \partial \widetilde{\Upsilon}|_M$.
    Hence 
    \begin{equation}
        \Phi(D^\theta \otimes \nabla^{\mathcal{T}^\#}, D^{\widehat{\theta}} \otimes \nabla^{\mathcal{T}^\#}) = \partial \widetilde{\Upsilon} \wedge \Psi(\nabla^{\widetilde{h}})|_M
    \end{equation}
    for some invariant polynomial $\Psi$ of the appropriate degree.
    Since $\widetilde{\Upsilon}$ is pluriharmonic on $\Omega$ and $\Psi(\nabla^{\widetilde{h}})$ is closed, Stokes' theorem gives
    \begin{equation}
        \int_M \Phi(D^\theta \otimes \nabla^{\mathcal{T}^\#}, D^{\widehat{\theta}} \otimes \nabla^{\mathcal{T}^\#}) = \int_\Omega \conj{\partial}\partial \widetilde{\Upsilon} \wedge \Psi(\nabla^{\widetilde{h}}) = 0.
    \end{equation}
\end{proof}

\subsection{Residues of Chern classes}
We now consider the following general situation. 
Let $\conj{\Omega}$ be a compact oriented manifold with boundary $M=\partial\Omega$, and let $\widetilde{E}\to\conj{\Omega}$ be a complex vector bundle of rank $k$ equipped with a connection $\widetilde{\nabla}$. 
We denote their restrictions to the boundary by $E=\widetilde{E}|_M, \nabla=\widetilde{\nabla}|_E$.
We assume $\dim \Omega = 2(n+1)$ and consider the differential character $S_{c_{q_1}}(\nabla)*\cdots*S_{c_{q_m}}(\nabla)$ on the boundary, where $q_1 + \cdots + q_m = n+1$.
Then its evaluation on the fundamental cycle $M$ is known to coincide with the characteristic number of the domain \cite{cheeger_differential_1985}:
\begin{equation}
    \left(S_{c_{q_1}}(\nabla)*\cdots*S_{c_{q_m}}(\nabla)\right)(M) = \int_\Omega (c_{q_1} \cdots c_{q_m})(\widetilde{\nabla}) + \mathbb{Z}.
\end{equation}
If the differential character admits an $\mathbb{R}$-valued lift defined by an $r_1=k-q_1+1$ frame $\bm{s}_1$ of $E$, then the difference
\begin{equation}
    \left(\widetilde{S}^{\bm{s}_1}_{c_{q_1}}(\nabla)*S_{c_{q_2}}(\nabla)*\cdots*S_{c_{q_m}}(\nabla)\right)(M) - \int_\Omega (c_{q_1} \cdots c_{q_m})(\widetilde{\nabla}) \in \mathbb{Z}
\end{equation}
is a topological quantity coming from $\bm{s}_1$, which we describe in this subsection.

Let $K$ be a smooth triangulation of $\conj{\Omega}$ compatible with the boundary $M$, and let $K^*$ be its dual cellular decomposition.
Since the homotopy groups of the Stiefel manifold $\mathrm{St}(r_1,\mathbb{C}^k)$ vanish up to dimension $2q_1-2$, $\bm{s}_1$ extends to an $r_1$-frame of $\widetilde{E}$ over the $(2q_1-1)$-skeleton $(K^*)^{2q_1-1}$.
Let $\sigma\in K$ be a $2(n-q_1+1)$-simplex, and let $\sigma^* \in K^*$ be its dual $2q_1$-cell.
Then the $r_1$-frame $\bm{s}_1$, already extended over $\partial(\sigma^*)$, induces a map
\begin{align}
    \varphi \colon \mathbb{S}^{2q_1-1} \simeq \partial(\sigma^*)
    &\stackrel{\bm{s}_1}{\longrightarrow} \mathrm{St}(r_1,\widetilde{E})|_{\sigma^*}\\
    &\simeq \sigma^* \times \mathrm{St}(r_1,\mathbb{C}^k)
    \twoheadrightarrow \mathrm{St}(r_1,\mathbb{C}^k),
\end{align}
where $\mathrm{St}(r_1,\widetilde{E})$ denotes the Stiefel bundle of $r_1$-frames of $\widetilde{E}$, which is trivial over the contractible cell $\sigma^*$.
Since $\pi_{2q_1-1}(\mathrm{St}(r_1,\mathbb{C}^k))$ is isomorphic to $\mathbb{Z}$ and admits a canonical generator $x_{2q_1-1}$, the degree of $\varphi$ is an integer; we denote it by $I(\bm{s}_1,\sigma^*)$.
The class 
\begin{equation}
    \Res_{c_{q_1}}(\bm{s}_1,\widetilde{E}) = \left[\sum_{\sigma \colon 2(n-q_1+1)\text{-simplex of }K}I(\bm{s}_1,\sigma^*)\sigma\right] \in H_{2(n-q_1+1)}(\conj{\Omega},\mathbb{Z}),
\end{equation}
which is independent of the triangulation $K$ of $\conj{\Omega}$ and the extension of $\bm{s}_1$ over $(K^*)^{2q_1-1}$, is called the \emph{residue of Chern class} $c_{q_1}$ defined by $\bm{s}_1$.

This residue corresponds to the relative Chern class via Lefschetz duality.
Consider a cochain complex $(A^\bullet(\conj{\Omega},M),d)$ defined by 
\begin{equation}
    A^p(\conj{\Omega},M) = A^p(\conj{\Omega}) \oplus A^{p-1}(M),\quad d(\omega,\eta) = (d\omega,\omega|_M - d\eta).
\end{equation}
Its cohomology $H^\bullet_{\text{dR}}(\conj{\Omega},M)$ is called the \emph{relative de Rham cohomology}.
For a cohomology class $\alpha = [(\omega,\eta)] \in H^p_{\text{dR}}(\conj{\Omega},M)$, its \emph{Lefschetz dual} $L(\alpha) \in H_{2(n+1)-p}(\conj{\Omega},\mathbb{C})$ is characterized by 
\begin{equation}
    \int_\Omega \omega \wedge \tau - \int_M \eta \wedge \tau = \int_{L(\alpha)} \tau
\end{equation}
for every closed differential form $\tau \in A^{2(n+1)-p}(\conj{\Omega})$.

The $r_1$-frame $\bm{s}_1$ of $E$ determines the \emph{relative Chern class}
\begin{equation}
    c_{q_1}(\bm{s}_1,\widetilde{E}) = [(c_{q_1}(\widetilde{\nabla}),c_{q_1}(\nabla^{\bm{s}_1},\nabla))] \in H^{2q_1}_{\text{dR}}(\conj{\Omega},M).
\end{equation}
Suwa's result \cite{suwa_residue_2008} gives 
\begin{equation}
    L\left(c_{q_1}(\bm{s}_1,\widetilde{E})\right) = \gamma\left(\Res_{c_{q_1}}(\bm{s}_1,\widetilde{E})\right),
\end{equation}
where $\gamma$ denotes the natural map $\gamma \colon H_\bullet(\conj{\Omega},\mathbb{Z}) \to H_\bullet(\conj{\Omega},\mathbb{C})$.
See \cite{suwa_complex_2024} for a thorough treatment.
As a corollary, if $\Phi = c_{q_2}\cdots c_{q_m}$, then we obtain the \emph{residue formula} 
\begin{small}
\begin{equation}
    \int_\Omega (c_{q_1} \cdot \Phi)(\widetilde{\nabla}) = \left\langle\Res_{c_{q_1}}(\bm{s}_1,\widetilde{E}),\Phi(\widetilde{E})\right\rangle + \left(\widetilde{S}^{\bm{s}_1}_{c_{q_1}}(\nabla)*S_{c_{q_2}}(\nabla)*\cdots*S_{c_{q_m}}(\nabla)\right)(M).
    \label{eq:residue_formula}
\end{equation}
\end{small}

\subsection{Bulk-boundary formulas}
We return to the CR setting above. 
Thus $X$ is a complex manifold of complex dimension $n+1$, and $\Omega\subset X$ is a relatively compact strictly pseudoconvex domain whose boundary $M=\partial\Omega$ admits a pseudo-Einstein structure. 
We fix a Fefferman defining function $\rho$ of $\Omega$ and set $\theta=\frac{\sqrt{-1}}{2}(\conj{\partial}-\partial)\rho|_{TM}$.
By the reduction theorem (Theorem \ref{thm:reduction_theorem}), the ambient construction (Theorem \ref{thm:ambient_construction}) and the residue formula \eqref{eq:residue_formula}, the CR Cheeger--Simons invariants can be expressed as integrals over $\Omega$:
\begin{equation}
    \mu_{c_{q_1}\cdots c_{q_m}}(M) = \int_\Omega (c_{q_1}\cdots c_{q_m})(\nabla^{\widetilde{h}})+\mathbb{Z} = \int_\Omega (c_{q_1}\cdots c_{q_m})(\conj{\nabla}^g) + \mathbb{Z}.
\end{equation}
When $\underline{\mathcal{T}} = T^{1,0}\conj{\Omega}|_M$ admits an $r_1=((n+1)-q_1+1)$-frame $\underline{\bm{s}}_1$, setting $\Phi = c_{q_2}\cdots c_{q_m}$, we have 
\begin{equation}
\begin{aligned}
    &\left(\widetilde{S}_{c_{q_1}}^{\iota\left(\underline{\bm{s}}_1\right)}(\nabla^{\mathcal{T}}) * S_{c_{q_2}}(\nabla^{\mathcal{T}}) * \cdots * S_{c_{q_m}}(\nabla^{\mathcal{T}})\right)(M)\\
    &=\left(\widetilde{S}_{c_{q_1}}^{\underline{\bm{s}}_1}(\nabla^{\underline{\mathcal{T}}}) * S_{c_{q_2}}(\nabla^{\underline{\mathcal{T}}}) * \cdots * S_{c_{q_m}}(\nabla^{\underline{\mathcal{T}}})\right)(M)\\
    &= \int_\Omega (c_{q_1}\cdot \Phi)(\nabla^{\widetilde{h}}) - \left\langle\Res_{c_{q_1}}(\iota\left(\underline{\bm{s}}_1\right),\widetilde{\mathcal{T}}),\Phi(\widetilde{\mathcal{T}})\right\rangle\\
    &= \int_\Omega (c_{q_1}\cdot \Phi)(\conj{\nabla}^g) - \left\langle\Res_{c_{q_1}}(\underline{\bm{s}}_1,T^{1,0}\conj{\Omega}),\Phi(T^{1,0}\conj{\Omega})\right\rangle.
\end{aligned}
\end{equation}
In particular, when $X=\mathbb{C}^{n+1}$, the bundle $T^{1,0}\conj{\Omega}$ admits a global frame.
Using this frame to define the lift of the differential characters, we recover  \cite[Theorem 5.2 (a)]{burns_characteristic_1990}: 
\begin{equation}
    \widetilde{\mu}_{c_{q_1}\cdots c_{q_m}}(M) = \int_\Omega (c_{q_1}\cdots c_{q_m})(\conj{\nabla}^g) \quad (m\ge 2).
\end{equation}

The bundle $T^{1,0}\conj{\Omega}|_M$ always admits a $1$-frame, namely the $(1,0)$-vector field $\xi$ determined by \eqref{eq:xi}.
Under the isomorphism $\widetilde{\mathcal{T}}|_M \simeq \mathcal{T}$ in Theorem \ref{thm:ambient_construction}, the induced $2$-frame $\iota(\xi)$ corresponds to the canonical $2$-frame used to lift the differential character $S_{c_{n+1}}(\nabla^{\mathcal{T}})$ in Subsection \ref{subsec:Secondary}.
Since the real part $N$ of $\xi$ points outward, the Poincaré--Hopf theorem gives 
\begin{equation}
    \left\langle\Res_{c_{n+1}}(\xi,T^{1,0}\conj{\Omega}),1\right\rangle = \chi(\Omega).
\end{equation}
Therefore, we recover the formulas of \cite[Theorem 5.2 (b)]{burns_characteristic_1990} and \cite{marugame_renormalized_2016}: 
\begin{equation}
    \int_\Omega c_{n+1}(\conj{\nabla}^g) = \chi(\Omega) + \widetilde{\mu}_{c_{n+1}}(M).
    \label{eq:renormalized_CGB}
\end{equation}
This formula is often referred to as the \emph{renormalized Chern--Gauss--Bonnet formula}.

\section{Examples}
In this section, we compute CR Cheeger--Simons invariants for boundaries of Reinhardt domains and for regular Sasakian $\eta$-Einstein manifolds.
In these cases, the invariant $\widetilde{\mu}_{c_{n+1}}$ associated with the top Chern polynomial was computed by Marugame \cite{marugame_renormalized_2016} and Takeuchi \cite{takeuchi_formulae_2024}.
We therefore focus on the invariants associated with products of Chern polynomials.

\subsection{Boundaries of Reinhardt domains}
For $r>0$, consider the Reinhardt domain 
\begin{equation}
    \Omega_r = \left\{(z^1,\ldots,z^{n+1})\in (\mathbb{C}^{\times})^{n+1} \mid \sum_{i=1}^{n+1} (\log \abs{z^i})^2 < r^2 \right\}.
\end{equation}
We denote its boundary by $M_r = \partial\Omega_r$. 
Using the projection 
\begin{equation}
    \pi \colon M_r \ni (z^1,\ldots,z^{n+1}) \mapsto (\log \abs{z^1},\ldots,\log \abs{z^{n+1}}) \in \mathbb{S}_r^n,
\end{equation}
one sees that the boundary $M_r$ is diffeomorphic to $\mathbb{S}_r^{n}\times\mathbb{T}^{n+1}$ and that $T^{1,0}M_r \simeq \pi^*T^{\mathbb{C}}\mathbb{S}_r^n$.

Since the complexified tangent bundle $T^{\mathbb{C}}\mathbb{S}_r^n$ is trivial, the generalized Burns--Epstein invariants are defined for $M_r$.
To exploit the symmetry of $M_r$, however, we use the global trivialization $(z^i\partial/\partial z^i)$ of $T^{1,0}(\mathbb{C}^\times)^{n+1}|_{M_r}\simeq\underline{\mathcal{T}}$, and compute the associated lift of $S_{c_{q_1}}(\nabla^{\underline{\mathcal{T}}})*\cdots*S_{c_{q_m}}(\nabla^{\underline{\mathcal{T}}})$.

Let 
\begin{equation}
    \rho = 2\left(\sum_{i=1}^{n+1} (\log \abs{z^i})^2 - r^2\right)
\end{equation}
be the natural defining function of $\Omega_r$.
As will be clear from the computation below, the associated contact form $\theta = \frac{\sqrt{-1}}{2}(\conj{\partial}-\partial)\rho$ is pseudo-Einstein.
In local logarithmic coordinates $w^i=\log z^i$, the holomorphic transformations 
\begin{equation}
    F_{A,b}(w)=Aw+\sqrt{-1}b, \quad A\in \mathit{O}(n+1),\,b\in\mathbb{R}^{n+1},
\end{equation}
generate a transitive pseudogroup of local CR diffeomorphisms on $M_r$.
Although the transformations corresponding to general $A\in \mathit{O}(n+1)$ do not descend to globally defined transformations of $(\mathbb{C}^\times)^{n+1}$, they are well defined locally after choosing branches of the logarithms.
The pseudogroup preserves the defining function $\rho$, the contact form $\theta$, and hence the reduced connection $\nabla^{\underline{\mathcal{T}}}$. 
Moreover, since the action induces constant gauge transformations 
\begin{equation}
    (F_{A,b})_*\frac{\partial}{\partial w^i} = A_i^{\,\,\,j}\frac{\partial}{\partial w^j}
\end{equation}
on the global frame $\bm{s}=(z^i\partial/\partial z^i)=(\partial/\partial w^i)$, the lift of the Cheeger--Simons differential character $\widetilde{S}^{\bm{s}}_{c_q}(\nabla^{\underline{\mathcal{T}}})$ is also preserved.
It therefore suffices to compute $\widetilde{S}^{\bm{s}}_{c_q}(\nabla^{\underline{\mathcal{T}}})$ at a single point, which we take to be $p=(1,\ldots,1,e^r)$.

Fix logarithmic coordinates $w^i = x^i + \sqrt{-1}y^i = \log z^i$ around $p$, and set $\eta^i = dw^i = dz^i/z^i$.
An admissible coframe around $p$ is given by setting 
\begin{equation}
    \theta^{\alpha} = \eta^\alpha - \frac{\sum_\gamma h^{\alpha\conj{\gamma}}x^\gamma}{2(x^{n+1})^2}\partial \rho,
\end{equation}
where 
\begin{equation}
    h_{\alpha\conj{\beta}} = \delta_{\alpha\conj{\beta}} + \frac{x^\alpha x^\beta}{(x^{n+1})^2}.
\end{equation}
Marugame \cite[Proposition 5.2]{marugame_renormalized_2016} showed that the Tanaka--Webster connection of $\theta$ satisfies 
\begin{align}
    \omega_{\beta}^{\,\,\,\alpha} &= -\frac{\sqrt{-1}}{4r^2}\theta\delta_{\beta}^{\,\,\,\alpha},\\
    A_{\alpha\beta} &= -\frac{\sqrt{-1}}{4r^2}\delta_{\alpha\beta},\\
    R_{\beta\,\,\,\gamma\conj{\delta}}^{\,\,\,\alpha} &= \frac{1}{4r^2}(\delta_{\beta}^{\,\,\,\alpha}\delta_{\gamma\conj{\delta}} + \delta_{\gamma}^{\,\,\,\alpha}\delta_{\beta\conj{\delta}} - \delta_{\conj{\delta}}^{\,\,\,\alpha}\delta_{\beta\gamma})
\end{align}
around $p$.
Using this, we see that the connection form $\theta_i^{\,\,\,j}$ of $\nabla^{\underline{\mathcal{T}}}$ satisfies 
\begin{align}
    \theta_{\beta}^{\,\,\,\alpha} &= -\frac{\sqrt{-1}}{4r^2(n+1)}\theta\delta_{\beta}^{\,\,\,\alpha} + \frac{1}{2r^2}x^\alpha(\theta^\beta + \theta^{\conj{\beta}}),\\
    \theta_{\beta}^{\,\,\,n+1} &= -\theta^{\conj{\beta}},\\
    \theta_{n+1}^{\,\,\,\alpha} &= \frac{n}{4r^2(n+1)}\theta^\alpha + \frac{1}{4r^2}\theta^{\conj{\alpha}},\\
    \theta_{n+1}^{\,\,\,n+1} &= \frac{\sqrt{-1}n}{4r^2(n+1)}\theta
\end{align}
and the curvature form $\Theta_i^{\,\,\,j}$ satisfies 
\begin{align}
    \Theta_{\beta}^{\,\,\,\alpha} &= \frac{1}{4r^2(n+1)}\delta_{\beta}^{\,\,\,\alpha}\delta_{\gamma\conj{\delta}}\theta^\gamma \wedge \theta^{\conj{\delta}}\\ &\qquad + \frac{1}{4r^2(n+1)}\theta^\alpha \wedge \theta^{\conj{\beta}} + \frac{1}{4r^2}(\theta^\alpha + \theta^{\conj{\alpha}}) \wedge \theta^{\conj{\beta}},\\
    \Theta_{\beta}^{\,\,\,n+1} &= -\frac{\sqrt{-1}}{4r^2}\theta^\beta \wedge \theta,\\
    \Theta_{n+1}^{\,\,\,\alpha} &= \frac{\sqrt{-1}}{16r^4}\theta^\alpha \wedge \theta + \frac{\sqrt{-1}(n+2)}{16r^4(n+1)}\theta^{\conj{\alpha}} \wedge \theta,\\
    \Theta_{n+1}^{\,\,\,n+1} &= 0.
\end{align}

\subsubsection{Computation of the CR Cheeger--Simons invariant}
We compute the CR Cheeger--Simons invariant
\begin{equation}
    \widetilde{\mu}_{c_{q_1}\cdots c_{q_m}}(M_r) = \int_{M_r} c_{q_1}(\nabla^{\bm{s}},\nabla^{\underline{\mathcal{T}}})\wedge(c_{q_2}\cdots c_{q_m})(\nabla^{\underline{\mathcal{T}}}).
\end{equation}
For computational convenience, we use Chern characters 
\begin{equation}
    \ch_q(A)=\frac{1}{q!}\tr\left(\frac{\sqrt{-1}}{2\pi}A\right)^q
\end{equation}
instead of Chern polynomials, and compute 
\begin{equation}
    \int_{M_r}\ch_{q_1}(\nabla^{\bm{s}},\nabla^{\underline{\mathcal{T}}})\wedge(\ch_{q_2}\cdots\ch_{q_m})(\nabla^{\underline{\mathcal{T}}}).
    \label{eq:CSinvariant_ch}
\end{equation}
Moreover, we use the coframe $(\eta^i = dw^i)$ instead of $(\theta^\alpha, \sqrt{-1}\theta)$, since $\ch_q(\nabla^{\bm{s}},\nabla^{\underline{\mathcal{T}}})$ is then given by 
\begin{equation}
    \frac{1}{(q-1)!}\left(\frac{\sqrt{-1}}{2\pi}\right)^q\int_0^1\tr\left((\theta_{i}^{\,\,\,j})\wedge(t(\Theta_{i}^{\,\,\,j})+(t^2-t)(\theta_i^{\,\,\,j})\wedge(\theta_i^{\,\,\,j}))^{q-1}\right)dt.
    \label{eq:CSform_ch}
\end{equation}
\begin{lem}
    For the boundary $M_r$ of the Reinhardt domain $\Omega_r$, the form \eqref{eq:CSform_ch} is a sum of forms of the form $\eta^{n+1}\wedge\alpha_{q-1+k,q-1-k}$, where $k\ge0$ and $\alpha_{q-1+k,q-1-k}$ is of bidegree $(q-1+k,q-1-k)$.
\end{lem}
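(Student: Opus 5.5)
Everything will be rewritten in the coframe $(\eta^i)$, $i=1,\dots,n+1$. Two facts about the one-forms in $\theta_i^{\,\,\,j}$ and the two-forms in $\Theta_i^{\,\,\,j}$ will be isolated, and the bidegree bookkeeping in \eqref{eq:CSform_ch} will then follow.

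First I record the relations between the coframes at (and, by homogeneity, near) $p$. At $p$ we have $x^\alpha=0$ and $x^{n+1}=r$. Hence $\partial\rho=2\sum_i x^i\eta^i=2r\eta^{n+1}$, so $\theta^\alpha=\eta^\alpha$ there. From $\theta=\frac{\sqrt{-1}}{2}(\conj{\partial}-\partial)\rho$ we get $\sqrt{-1}\theta=\frac12(\partial\rho-\conj{\partial}\rho)=r(\eta^{n+1}-\eta^{\conj{n+1}})$. Since the computation is pointwise and the pseudogroup acts transitively, it suffices to work at $p$. Note, however, that $\theta_\beta^{\,\,\,\alpha}$ contains $x^\alpha$, which vanishes at $p$ but not its differential. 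I will avoid differentiating forms, because $\Theta$ is already given explicitly. The Chern--Simons integrand \eqref{eq:CSform_ch} is then a polynomial in $\theta_i^{\,\,\,j}(p)$ and $\Theta_i^{\,\,\,j}(p)$.

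The key structural observation is this. Restricted to $TM_r$, the form $\eta^{n+1}$ equals $\eta^{\conj{n+1}}$ up to a sign, because $d\rho|_{TM}=0$ gives $\eta^{n+1}+\eta^{\conj{n+1}}=0$ at $p$. So on $M_r$ the span of $\theta$ is the line spanned by $\eta^{n+1}$, and I will always express $\theta$ as a multiple of $\eta^{n+1}$, counted as type $(1,0)$. With this convention, inspection of the listed formulas gives two facts. Every entry of $\Theta_i^{\,\,\,j}$ is either of type $(1,1)$ in $\eta^\alpha,\eta^{\conj\beta}$, or of the form $\eta^{n+1}\wedge(\text{a }(1,0)\text{ or }(0,1)\text{ form})$. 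Every entry of $\theta_i^{\,\,\,j}$ at $p$ is either a multiple of $\eta^{n+1}$, or $\theta^{\conj\beta}$, or a combination $a\theta^\alpha+b\theta^{\conj\alpha}$. Then I expand $\tr(\theta\wedge(t\Theta+(t^2-t)\theta\wedge\theta)^{q-1})$ as a $(2q-1)$-form in the $2n+1$ independent forms $\eta^\alpha,\eta^{\conj\alpha},\eta^{n+1}$.

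The claim has two parts: each surviving term contains $\eta^{n+1}$ exactly once, and the remaining $(2q-2)$-form has at least as many holomorphic as antiholomorphic factors. I plan to prove the second part by a weight argument. Assign weight $+1$ to each $\eta^\alpha$ or $\eta^{n+1}$ factor and $-1$ to each $\eta^{\conj\alpha}$ factor. The matrix units of $\theta_i^{\,\,\,j}$ carry compensating weights: $\theta_\beta^{\,\,\,n+1}=-\theta^{\conj\beta}$ sits in a position with index weight $+1$. The point is that in a trace, a cyclic product of matrix entries has total index weight zero. So if each entry's form-weight minus its index-weight is $\ge0$, the total form-weight is $\ge0$. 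I would grade indices by $\deg(n+1)=1$, $\deg(\alpha)=0$, and check entrywise that form-weight plus $\deg(i)-\deg(j)$ is nonnegative. Then $\theta_\beta^{\,\,\,n+1}$ gives $-1+1=0$. For $\theta_{n+1}^{\,\,\,\alpha}$, the part $\theta^{\conj\alpha}$ gives $-1+1=0$ and the part $\theta^\alpha$ gives $2$. The $\Theta$ entries are checked similarly. The main obstacle is the mixed terms like $\frac{1}{2r^2}x^\alpha\theta^{\conj\beta}$ and $(\theta^\alpha+\theta^{\conj\alpha})\wedge\theta^{\conj\beta}$ in $\Theta_\beta^{\,\,\,\alpha}$, which have weight $-2$ in a diagonal-degree slot. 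For these I would instead use the $x^\alpha=0$ vanishing at $p$ and the explicit computation of traces of products, or refine the grading.

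The first part should follow from $\eta^{n+1}\wedge\eta^{n+1}=0$ together with a top-degree count. A form without $\eta^{n+1}$ restricts to zero in the relevant integrals, and only the $\eta^{n+1}$-containing terms are retained. Alternatively, every term lacking $\eta^{n+1}$ has odd degree $2q-1$ but total weight zero, which is impossible, since $2q-1$ factors of weight $\pm1$ cannot sum to $0$. This forces exactly one $\eta^{n+1}$ factor of weight $+1$, leaving a $(2q-2)$-form of weight $2k\ge0$, that is, of bidegree $(q-1+k,q-1-k)$.
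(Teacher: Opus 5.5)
There is a genuine gap at the first step: you analyse the wrong connection matrix. The expression \eqref{eq:CSform_ch} represents $\ch_q(\nabla^{\bm{s}},\nabla^{\underline{\mathcal{T}}})$ only when $(\theta_i^{\,\,\,j})$ and $(\Theta_i^{\,\,\,j})$ are computed in the frame $\bm{s}=(\partial/\partial w^i)$ dual to $(\eta^i)$. That is the only frame in which $\nabla^{\bm{s}}$ has zero connection form, and it is what the paper means by "using the coframe $(\eta^i)$".

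The entries you use ($\theta_\beta^{\,\,\,n+1}=-\theta^{\conj{\beta}}$, the $\theta^{\conj{\alpha}}$-part of $\theta_{n+1}^{\,\,\,\alpha}$, the $x^\alpha$-terms of $\theta_\beta^{\,\,\,\alpha}$) belong to the frame $\sigma'_\theta(\bm{Z})$, i.e.\ $(\bm{Z},\xi)$. Substituting $\theta^\alpha=\eta^\alpha$ and $\sqrt{-1}\theta=2r\eta^{n+1}$ at $p$ rewrites these one-forms but does not change the frame. The two frames differ by a matrix $g$ with $g(p)=\mathrm{diag}(1,\ldots,1,(2r)^{-1})$ but $dg(p)\neq0$. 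For instance, to first order at $p$ one has $Z_\beta=\partial/\partial w^\beta-(x^\beta/x^{n+1})\,\partial/\partial w^{n+1}$. The term $-d(x^\beta/x^{n+1})=-\frac{1}{2r}(\eta^\beta+\eta^{\conj{\beta}})$, together with $\xi(p)=\frac{1}{2r}\partial/\partial w^{n+1}$, is exactly what turns the $\eta$-frame entry $\theta_\beta^{\,\,\,n+1}=\frac{1}{2r}\eta^\beta$ into $-\theta^{\conj{\beta}}$.

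Since $\tr(\theta\wedge\Omega^{q-1})$ is not gauge invariant, and $\nabla^{\bm{s}}$ is not trivial in the frame $(\bm{Z},\xi)$, your bookkeeping concerns a different form. The structural fact you are missing is that in the correct frame
\[
(\theta_i^{\,\,\,j})=\frac{1}{2r(n+1)}\begin{pmatrix}-\eta^{n+1}\delta_{\beta}^{\,\,\,\alpha}& -\eta^\alpha\\ (n+1)\eta^\beta& n\eta^{n+1}\end{pmatrix}
\]
has only $(1,0)$ entries, because it is the boundary value of the renormalized connection in a holomorphic frame. Hence, modulo $\eta^{n+1}$, the curvature has only $(2,0)$ and $(1,1)$ parts.

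Even granting your setup, both halves of the bookkeeping remain open.
\begin{itemize}
\item Your grading is inconsistent. In a trace the positions of $\theta_\beta^{\,\,\,n+1}$ and $\theta_{n+1}^{\,\,\,\alpha}$ must carry opposite index weights, but you give both $+1$.
\item You leave the inequality unresolved at $\Theta_\beta^{\,\,\,\alpha}$, and $x^\alpha(p)=0$ cannot help there because the coefficient $\frac{1}{4r^2}$ is constant. In fact that entry cannot contain $\theta^{\conj{\alpha}}\wedge\theta^{\conj{\beta}}$. Curvature is tensorial, $g(p)$ preserves the $\alpha\beta$-block, and in the $\eta$-frame the corresponding term is $\frac{1}{4r^2}(\eta^\alpha+\eta^{\conj{\alpha}})\wedge\eta^\beta$.
\item Your parity argument for "exactly one $\eta^{n+1}$" presupposes total weight $0$. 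At best you get weight $\ge0$, and a $(2q-1)$-form of bidegree $(q,q-1)$ has weight $1$.
\item The remark that $\eta^{n+1}$-free terms drop out of the integrals is not what the lemma asserts. It is not evident either, since $(\ch_{q_2}\cdots\ch_{q_m})(\nabla^{\underline{\mathcal{T}}})$ may contain $\eta^{n+1}$.
\end{itemize}

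The paper's proof runs entirely in the $\eta$-frame. Modulo $\eta^{n+1}$ the connection matrix is block off-diagonal: only $\theta_{n+1}^{\,\,\,\alpha}$ and $\theta_\beta^{\,\,\,n+1}$ survive. By contrast, $\Theta$ and $\theta\wedge\theta$ are block diagonal; the $(n+1,n+1)$-entry of $\theta\wedge\theta$ is a multiple of $\sum_\beta\eta^\beta\wedge\eta^\beta=0$. So the trace vanishes modulo $\eta^{n+1}$, and every term contains $\eta^{n+1}$ exactly once.

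After this factor is removed, every entry has bidegree $(a,b)$ with $a\ge b$, except $\Theta_{n+1}^{\,\,\,\alpha}$, which contains $\eta^{n+1}\wedge\eta^{\conj{\alpha}}$. In the trace this entry must be paired with $\theta_\alpha^{\,\,\,n+1}=\frac{1}{2r}\eta^\alpha$, because the other entries leading back to the index $n+1$ already contain $\eta^{n+1}$. This pairing restores the balance.
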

\begin{proof}
    The connection form with respect to the coframe $(\eta^i)$ is given by 
    \begin{equation}
        (\theta_i^{\,\,\,j}) = \frac{1}{2r(n+1)}\begin{pmatrix}
            -\eta^{n+1}\delta_{\beta}^{\,\,\,\alpha}& -\eta^\alpha\\
            (n+1)\eta^\beta& n\eta^{n+1}
        \end{pmatrix}.
    \end{equation}
    Moreover, setting $I_n = {\rm diag}(1,\ldots,1,0)$ and $\eta = (\eta^1,\ldots,\eta^n,0)^{\top}$, the curvature form satisfies 
    \begin{equation}
    \scalebox{0.95}{$\displaystyle
        (\Theta_{i}^{\,\,\,j}) = \frac{1}{4r^2(n+1)}\left((\eta^\top \wedge \conj{\eta})I_n + \eta \wedge \conj{\eta}^\top + (n+1)(\eta+\conj{\eta}) \wedge \eta^\top\right) \mod \eta^{n+1}.
        $}
    \end{equation}
    A bidegree count shows that the form \eqref{eq:CSform_ch} vanishes modulo $\eta^{n+1}$.
    The components of $(\theta_i^{\,\,\,j})$ and $(\Theta_i^{\,\,\,j})$ are all $(i,j)$-forms with $i\ge j$, possibly multiplied by $\eta^{n+1}$, except for the $(\alpha,n+1)$-component of the curvature 
    \begin{equation}
        \Theta_{n+1}^{\,\,\,\alpha} = -\frac{1}{4r^2}\eta^{n+1}\wedge\eta^\alpha - \frac{n+2}{4r^2(n+1)}\eta^{n+1}\wedge\eta^{\conj{\alpha}}.
    \end{equation}
    However, in the expansion of \eqref{eq:CSform_ch}, the exceptional components must be multiplied by the $(n+1,\alpha)$-components of the connection $\theta_{\alpha}^{\,\,\,n+1} = \frac{1}{2r}\eta^\alpha$.
    This proves the lemma.
\end{proof}

\begin{lem}
    Let 
    \begin{equation}
        (\Theta_i^{\,\,\,j})^{(1,1)} = \frac{1}{4r^2(n+1)}\left((\eta^\top \wedge \conj{\eta})I_n + \eta \wedge \conj{\eta}^\top + (n+1)\conj{\eta} \wedge \eta^\top\right)
    \end{equation}
    denote the $(1,1)$-part of $\Theta$.
    The integral \eqref{eq:CSinvariant_ch} is equal to 
    \begin{equation}
    \scalebox{0.95}{$
    \begin{aligned}
        &\frac{1}{q_1!\cdots q_m!}\left(\frac{\sqrt{-1}}{2\pi}\right)^{n+1} \int_{M_r} \tr\left((\theta_i^{\,\,\,j})\wedge(\Theta_i^{\,\,\,j})^{q_1-1}\right)\\
        &\qquad\qquad\qquad\qquad\qquad\qquad\wedge\tr\left(((\Theta_i^{\,\,\,j})^{(1,1)})^{q_2}\right)\wedge\cdots\wedge\tr\left(((\Theta_i^{\,\,\,j})^{(1,1)})^{q_m}\right).
    \end{aligned}
    $}
    \end{equation}
\end{lem}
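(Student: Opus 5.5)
We will show that, once the integral \eqref{eq:CSinvariant_ch} is written out with the form \eqref{eq:CSform_ch}, three simplifications apply: the $t$-dependent correction $(t^2-t)\theta\wedge\theta$ drops out, the $t$-integration contributes the factor $1/q_1$, and in the remaining factors only the $(1,1)$-part of $\Theta$ survives. The tool throughout is a bidegree count on $M_r$. Pulled back to $M_r$, the forms $\eta^1,\ldots,\eta^n,\conj{\eta}^1,\ldots,\conj{\eta}^n$ together with $\eta^{n+1}$ (equivalently $\theta$) span the cotangent space. A top-degree form on $M_r$ must therefore contain exactly one factor of $\eta^{n+1}$ and have bidegree $(n,n)$ in the remaining $\eta^\alpha,\conj{\eta}^\alpha$, counting $\eta^{n+1}$ separately.

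\textbf{Step 1: remove the $\theta\wedge\theta$ term.} By the previous lemma, the Chern--Simons factor $\ch_{q_1}(\nabla^{\bm{s}},\nabla^{\underline{\mathcal{T}}})$ is a sum of terms $\eta^{n+1}\wedge\alpha_{q_1-1+k,\,q_1-1-k}$ with $k\ge0$. Since $\eta^{n+1}$ already appears in this factor, we may compute the other factors modulo $\eta^{n+1}$. There the curvature reduces to the displayed expression modulo $\eta^{n+1}$, namely its $(1,1)$-part plus the $(2,0)$-part $\eta\wedge\eta^\top$. The full product then has bidegree $(n+k',\,n-k')$ with $k'\ge0$, and it survives only when $k'=0$. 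This forces $k=0$ in the first factor and only $(1,1)$-parts in the others, giving $\tr(((\Theta)^{(1,1)})^{q_j})$ for $j\ge2$.

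Within the first factor, I will show that the $(t^2-t)\,\theta\wedge\theta$ contribution and any $(2,0)$-part of $\Theta$ only produce terms with $k>0$ once the $\eta^{n+1}$ is extracted. The block form of $(\theta_i^{\,\,j})$ shows its components are $(1,0)$-forms, either $\eta^\alpha$ or $\eta^{n+1}$. So $\theta\wedge\theta$ is purely of type $(2,0)$, and inserting it raises $k$. The exceptional $(0,1)$ pieces $\eta^{n+1}\wedge\conj{\eta}^\alpha$ of $\Theta_{n+1}^{\,\,\alpha}$ have already been handled in the previous lemma; they pair with $\theta_\alpha^{\,\,n+1}$, and I will check that they still fit the $k=0$ bookkeeping.

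\textbf{Step 2: collapse the $t$-integral.} After Step 1, the integrand of the first factor becomes $t^{q_1-1}\tr(\theta\wedge\Theta^{q_1-1})$. Here $\Theta$ may be kept in full, since the extra parts vanish in the total for the same degree reasons. Integrating, $\int_0^1 t^{q_1-1}\,dt=1/q_1$, and $\frac{1}{(q_1-1)!}\cdot\frac{1}{q_1}=\frac{1}{q_1!}$. Combining this with the factors $\frac{1}{q_j!}\left(\frac{\sqrt{-1}}{2\pi}\right)^{q_j}$ from the $\ch_{q_j}$ gives the stated prefactor.

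\textbf{Main obstacle.} The delicate point is the claim that the $(2,0)$ curvature part and $\theta\wedge\theta$ strictly raise $k$ inside a trace with a single $\theta$ factor. Handling this requires tracking the $\eta^{n+1}$-components of $\theta$, which are $(1,0)$-forms. It also requires checking, in the first factor, that the unique $\eta^{n+1}$ cannot come from $\Theta$ in a way that leaves a $(0,1)$ deficit. I would first treat the case where $\eta^{n+1}$ comes from $\theta$ and where it comes from $\Theta_{n+1}^{\,\,\alpha}$ separately, using the explicit matrices above.
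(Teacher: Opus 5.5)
Your proposal is correct and is essentially the paper's argument. You use the previous lemma to discard the $\eta^{n+1}$-terms in the remaining Chern character factors, observe that only $(1,1)$- and $(2,0)$-parts remain so that only $k=0$ and $(\Theta_i^{\,\,\,j})^{(1,1)}$ survive, drop the $(t^2-t)\,\theta\wedge\theta$ terms, and integrate $t^{q_1-1}$ to get $1/q_1$.

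The obstacle you flag closes quickly. The entries of $(\theta_i^{\,\,\,j})$ contain no $\conj{\eta}^\alpha$, and each term of an entry of $\Theta$ contains at most one. Hence any term with a $\theta\wedge\theta$ factor has at most $q_1-2$ factors of $\Theta$, so its antiholomorphic degree is at most $q_1-2$ and it cannot contribute to the $\eta^{n+1}\wedge(q_1-1,q_1-1)$-part.
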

\begin{proof}
The previous lemma implies that, in evaluating the integral \eqref{eq:CSinvariant_ch}, we may ignore the terms in $(\ch_{q_2}\cdots\ch_{q_m})(\Theta_i^{\,\,\,j})$ involving $\eta^{n+1}$.
If we ignore the components in $\Theta_i^{\,\,\,j}$ involving $\eta^{n+1}$, then all the components are sums of forms with bidegree $(2,0)$ and $(1,1)$.
Therefore, the only terms that contribute to \eqref{eq:CSinvariant_ch} are the $\eta^{n+1}\wedge((q_1-1,q_1-1)\text{-form})$-part of $\ch_{q_1}(\nabla^{\bm{s}},\nabla^{\underline{\mathcal{T}}})$ and the $(n+1-q_1,n+1-q_1)$-part of $(\ch_{q_2}\cdots\ch_{q_m})(\nabla^{\underline{\mathcal{T}}})$.

The former is equal to 
\begin{equation}
\begin{aligned}
    &\frac{1}{(q_1-1)!}\left(\frac{\sqrt{-1}}{2\pi}\right)^{q_1} \int_0^1 \tr((\theta_i^{\,\,\,j}) \wedge (t\Theta_i^{\,\,\,j})^{q_1-1})dt \\
    &\qquad = \frac{1}{q_1!}\left(\frac{\sqrt{-1}}{2\pi}\right)^{q_1} \tr((\theta_i^{\,\,\,j})\wedge(\Theta_i^{\,\,\,j})^{q_1-1}),
\end{aligned}
\end{equation}
because, in the binomial expansion of $t(\Theta_{i}^{\,\,\,j})+(t^2-t)(\theta_i^{\,\,\,j})\wedge(\theta_i^{\,\,\,j})$ to the power $q_1-1$, the terms involving $(\theta_i^{\,\,\,j})\wedge(\theta_i^{\,\,\,j})$ do not contribute to the $\eta^{n+1}\wedge((q_1-1,q_1-1)\text{-form})$-part.
The latter is easily seen to be equal to $(\ch_{q_2}\cdots\ch_{q_m})((\Theta_i^{\,\,\,j})^{(1,1)})$.
This proves the lemma.
\end{proof}

\begin{lem}
    For every $k\ge1$, we have 
    \begin{equation}
        \frac{1}{k!}\tr\left(((\Theta_i^{\,\,\,j})^{(1,1)})^k\right) = \frac{C_k}{(2r)^{2k}}(\eta^\top \wedge \conj{\eta})^k,
    \end{equation}
    where 
    \begin{equation}
        C_k = \frac{(n+2)-(n+2)^k}{k!(n+1)^k}.
    \end{equation}
\end{lem}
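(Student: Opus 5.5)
The plan is a direct matrix computation. The key observation is that the entries of $(\Theta_i^{\,\,\,j})^{(1,1)}$ are $2$-forms, and these commute with one another. So we may manipulate the matrix as if its entries lay in a commutative ring, and only keep track of signs when the underlying $1$-forms are reordered.

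Set $s=\eta^\top\wedge\conj{\eta}=\sum_\gamma\eta^\gamma\wedge\conj{\eta}^\gamma$. Define the matrices $a=\eta\wedge\conj{\eta}^\top$, with entries $\eta^\alpha\wedge\conj{\eta}^\beta$, and $b=\conj{\eta}\wedge\eta^\top$, with entries $\conj{\eta}^\alpha\wedge\eta^\beta$. Both are supported in the upper-left $n\times n$ block. Then
\begin{equation}
    (\Theta_i^{\,\,\,j})^{(1,1)}=\frac{1}{4r^2(n+1)}\left(sI_n+a+(n+1)b\right).
\end{equation}

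The first step is to record the multiplication table. Regrouping the middle two $1$-forms in each product gives
\begin{equation}
    a^2=-s\,a,\qquad b^2=s\,b,\qquad ab=0,\qquad ba=0.
\end{equation}
For instance, $(a^2)_{\alpha\gamma}=\sum_\beta\eta^\alpha\wedge(\conj{\eta}^\beta\wedge\eta^\beta)\wedge\conj{\eta}^\gamma=-s\wedge\eta^\alpha\wedge\conj{\eta}^\gamma$. The products $ab$ and $ba$ contain a factor $\conj{\eta}^\beta\wedge\conj{\eta}^\beta$ or $\eta^\beta\wedge\eta^\beta$, so they vanish. The traces are $\tr a=s$, $\tr b=-s$, and $\tr I_n=n$.

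Next, put $X=a+(n+1)b$. Since the mixed products vanish, an induction gives
\begin{equation}
    X^j=(-s)^{j-1}a+(n+1)^js^{j-1}b\qquad (j\ge1),
\end{equation}
and hence $\tr X^j=(-1)^{j-1}s^j-(n+1)^js^j$. Because $s$ is central, the binomial theorem applies:
\begin{equation}
    \tr\left((sI_n+X)^k\right)=ns^k+\sum_{j=1}^k\binom{k}{j}s^{k-j}\tr X^j.
\end{equation}
Using $\sum_{j\ge1}\binom{k}{j}(-1)^{j-1}=1$ and $\sum_{j\ge1}\binom{k}{j}(n+1)^j=(n+2)^k-1$, this becomes
\begin{equation}
    s^k\left(n+1-(n+2)^k+1\right)=\left((n+2)-(n+2)^k\right)s^k.
\end{equation}
Multiplying by the prefactor $(4r^2(n+1))^{-k}=(2r)^{-2k}(n+1)^{-k}$ and dividing by $k!$ gives exactly $C_k(2r)^{-2k}(\eta^\top\wedge\conj{\eta})^k$.

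The only delicate point is the sign bookkeeping in the relations $a^2=-sa$ and $b^2=sb$, together with the vanishing of $ab$ and $ba$. This depends on $\eta^\beta\wedge\eta^\beta=0$ and on the parity of the swaps of $1$-forms. I would verify these relations carefully entrywise before running the binomial expansion, since everything downstream is mechanical once they hold.
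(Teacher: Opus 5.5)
Your proposal is correct and is essentially the paper's argument. The paper states the closed form $\bigl((\Theta_i^{\,\,\,j})^{(1,1)}\bigr)^k=\frac{1}{(2r)^{2k}(n+1)^k}(\eta^\top\wedge\conj{\eta})^{k-1}\wedge\bigl((\eta^\top\wedge\conj{\eta})I_n+\eta\wedge\conj{\eta}^\top+((n+2)^k-1)\conj{\eta}\wedge\eta^\top\bigr)$ and then takes the trace; your relations $a^2=-sa$, $b^2=sb$, $ab=ba=0$ and your binomial expansion (which sums to $s^kI_n+s^{k-1}a+((n+2)^k-1)s^{k-1}b$) are exactly the ``direct calculation'' behind that formula, and your sign checks are right.
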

\begin{proof}
    From the definition of $(\Theta_i^{\,\,\,j})^{(1,1)}$, a direct calculation gives 
    \begin{equation}
    \begin{aligned}
        \left((\Theta_i^{\,\,\,j})^{(1,1)}\right)^k &= \frac{1}{(2r)^{2k}(n+1)^k}(\eta^\top \wedge \conj{\eta})^{k-1}\\ &\qquad\wedge \left((\eta^\top \wedge \conj{\eta})I_n + \eta \wedge \conj{\eta}^\top + ((n+2)^k-1)\conj{\eta} \wedge \eta^\top \right).
    \end{aligned}
    \end{equation}
    Taking trace, we obtain the desired result.
\end{proof}

We are left with the computation of the $\eta^{n+1}\wedge((q_1-1,q_1-1)\text{-form})$-part of $\tr\left((\theta_i^{\,\,\,j})\wedge(\Theta_i^{\,\,\,j})^{q_1-1}\right)$.
The computation splits into the cases $q_1=2$ and $q_1\ge 3$.

\begin{prop}
    The integral 
    \begin{equation}
        \int_{M_r} \ch_2(\nabla^{\bm{s}},\nabla^{\underline{\mathcal{T}}}) \wedge (\ch_{q_2}\cdots\ch_{q_m})(\nabla^{\underline{\mathcal{T}}})
    \end{equation}
    is given by 
    \begin{equation}
        \frac{n!C_2C_{q_2}\cdots C_{q_m}}{(2r)^{n+1}}\vol(\mathbb{S}^{n}).
    \end{equation}
\end{prop}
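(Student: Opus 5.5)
By the second of the three lemmas above, applied with $q_1=2$, the integral equals
\[
\frac{1}{2!\,q_2!\cdots q_m!}\Bigl(\frac{\sqrt{-1}}{2\pi}\Bigr)^{n+1}\int_{M_r}\tr\bigl((\theta_i^{\,\,\,j})\wedge(\Theta_i^{\,\,\,j})\bigr)\wedge\prod_{l\ge2}\tr\bigl(((\Theta_i^{\,\,\,j})^{(1,1)})^{q_l}\bigr).
\]
By the third lemma, each factor $\frac{1}{q_l!}\tr(((\Theta)^{(1,1)})^{q_l})$ equals $C_{q_l}(2r)^{-2q_l}(\eta^\top\wedge\conj\eta)^{q_l}$. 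Their product is therefore $C_{q_2}\cdots C_{q_m}(2r)^{-2(n-1)}(\eta^\top\wedge\conj\eta)^{n-1}$, using $q_2+\cdots+q_m=n-1$. So everything reduces to the single $3$-form $\frac12\tr(\theta\wedge\Theta)$, of which only the $\eta^{n+1}\wedge(1,1)$-part matters.

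I would compute this $3$-form directly from the explicit matrices. The first lemma's proof gives $(\theta_i^{\,\,\,j})$ in the coframe $(\eta^i)$. For $\Theta$, I would use the $(1,1)$-part mod $\eta^{n+1}$, the exceptional entries $\Theta_{n+1}^{\,\,\,\alpha}$, and also any $\eta^{n+1}$-terms of $\Theta$ I need, obtained by rewriting the listed $\Theta_\beta^{\,\,\,n+1}$ and $\Theta_{n+1}^{\,\,\,n+1}$ in $\eta$-coframes. The surviving contributions to $\tr(\theta\wedge\Theta)$ are of three kinds:
\begin{itemize}
\item the diagonal entries $\theta_\alpha^{\,\,\,\alpha}=-\eta^{n+1}/(2r(n+1))$ and $\theta_{n+1}^{\,\,\,n+1}=n\eta^{n+1}/(2r(n+1))$, paired with the $(1,1)$ part of $\Theta$ mod $\eta^{n+1}$;
\item $\theta_\alpha^{\,\,\,n+1}=\frac{1}{2r}\eta^\alpha$ paired with the exceptional $\Theta_{n+1}^{\,\,\,\alpha}$, of which only the $\eta^{n+1}\wedge\eta^{\conj\alpha}$ part survives;
\item $\theta_{n+1}^{\,\,\,\alpha}\wedge\Theta_\alpha^{\,\,\,n+1}$, which I expect to be of the wrong bidegree and hence to vanish.
\end{itemize}
Summing should give $\tr(\theta\wedge\Theta)=\lambda\,(2r)^{-3}\,\eta^{n+1}\wedge(\eta^\top\wedge\conj\eta)$ for an explicit constant $\lambda$. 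The target is $\lambda/2=2!\,C_2$ up to the normalization. Here $C_2=\frac{(n+2)-(n+2)^2}{2(n+1)^2}=-\frac{n+2}{2(n+1)}$. The diagonal terms already produce a factor $(n+2)/(n+1)$, which makes this plausible. Because the claim is phrased with $C_2$, the consistency check is that the same combinatorics as in the third lemma reappears, with one $\Theta$ replaced by $\theta$.

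The final step is to assemble $\eta^{n+1}\wedge(\eta^\top\wedge\conj\eta)^{n}$ times $(2r)^{-(2n+1)}$ and the constants $(\sqrt{-1}/2\pi)^{n+1}$, and to integrate over $M_r\simeq\mathbb{S}^n_r\times\mathbb{T}^{n+1}$.
\begin{itemize}
\item Write $\eta^i=dx^i+\sqrt{-1}dy^i$. Then $(\eta^\top\wedge\conj\eta)^n$ contains $n!(-2\sqrt{-1})^n\prod dx\,dy$ over the $\alpha$-indices.
\item At the point $p$, $dx^{n+1}=0$ on $TM_r$, so $\eta^{n+1}$ contributes $\sqrt{-1}dy^{n+1}$.
\item The $\mathbb{T}^{n+1}$-integral gives $(2\pi)^{n+1}$.
\item By the symmetry argument, homogeneity reduces the $\mathbb{S}^n_r$-integral to area times density, i.e.\ $\vol(\mathbb{S}^n_r)$ times the density of $dx^1\cdots dx^n$ at $p$, which is $1$. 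This gives $r^n\vol(\mathbb{S}^n)$.
\end{itemize}
The factors $\sqrt{-1}^{\,n+1}$ and $(2\pi)^{n+1}$ should cancel, and the powers $2^n r^n$ should combine with $(2r)^{-(2n+1)}$ and the $(2r)^{-4}$-type factors coming from the curvature normalizations to give exactly $(2r)^{-(n+1)}$.

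The main obstacle is bookkeeping: getting the signs and orientation conventions right (the outward normal, and the orientation of $M_r$ induced by $\theta\wedge(d\theta)^n$), and checking that $\lambda/2$ really equals $2C_2$, including the contribution of the exceptional entry. I would first verify the case $n=1$ by hand, where $M_r$ is $3$-dimensional, and then do the general computation.
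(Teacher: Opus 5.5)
Your reduction follows the paper's route: the second and third lemmas, then the $\eta^{n+1}\wedge(1,1)$-part of $\tr\bigl((\theta_i^{\,\,\,j})\wedge(\Theta_i^{\,\,\,j})\bigr)$, then fibre integration. The gap is that the one step carrying the content of the proposition is left as ``summing should give'', and both things you say about it are wrong.

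First, the diagonal terms do not produce the factor $(n+2)/(n+1)$; they cancel exactly. All $\theta_\alpha^{\,\,\,\alpha}$ equal $-\eta^{n+1}/(2r(n+1))$. Modulo $\eta^{n+1}$ one has $\Theta_{n+1}^{\,\,\,n+1}\equiv0$, and hence, by trace-freeness, $\sum_\alpha\Theta_\alpha^{\,\,\,\alpha}\equiv0$. So $\sum_i\theta_i^{\,\,\,i}\wedge\Theta_i^{\,\,\,i}$ has no $\eta^{n+1}\wedge(1,1)$-part. Your third bullet does vanish, as you expected. The entire contribution therefore comes from the exceptional entry:
\[
\sum_\alpha\frac{1}{2r}\eta^\alpha\wedge\Bigl(-\frac{n+2}{4r^2(n+1)}\eta^{n+1}\wedge\eta^{\conj{\alpha}}\Bigr)=\frac{n+2}{(n+1)(2r)^3}\eta^{n+1}\wedge(\eta^\top\wedge\conj{\eta})=-\frac{2!\,C_2}{(2r)^3}\eta^{n+1}\wedge(\eta^\top\wedge\conj{\eta}).
\]
Second, this gives $\lambda=-2C_2$, i.e.\ $\lambda/2=-C_2$, not $\lambda/2=2C_2$. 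With the target as you stated it, the final answer would come out multiplied by $-2$.

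The assembly also needs more care than orientation alone.
\begin{itemize}
\item There are no extra $(2r)^{-4}$-type factors. The $(2r)^{-3}$ and the $(2r)^{-2q_l}$ already give $(2r)^{-(2n+1)}$, and $2^nr^n(2r)^{-(2n+1)}=(2r)^{-(n+1)}$.
\item The imaginary units do not simply cancel. They combine as $\sqrt{-1}^{\,n+1}\cdot\sqrt{-1}\cdot(-2\sqrt{-1})^n=-2^n$. These factors come from $(\sqrt{-1}/2\pi)^{n+1}$, from $\eta^{n+1}=\sqrt{-1}\,dy^{n+1}$ on $TM_r$ at $p$, and from $(\eta^\top\wedge\conj{\eta})^n=n!(-2\sqrt{-1})^n\bigwedge_\alpha dx^\alpha\wedge dy^\alpha$.
\end{itemize}
The resulting $-1$ cancels the sign in $\lambda=-2C_2$. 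The integrand is then $\frac{n!\,C_2C_{q_2}\cdots C_{q_m}}{2\pi^{n+1}(2r)^{2n+1}}\,dy^{n+1}\wedge\bigwedge_\alpha dx^\alpha\wedge dy^\alpha$, as in the paper. Its integral over $\mathbb{T}^{n+1}\times\mathbb{S}^n_r$ gives the claimed value.
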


\begin{proof}
    The $\eta^{n+1}\wedge((1,1)\text{-form})$-part of $\tr((\theta_i^{\,\,\,j})\wedge(\Theta_i^{\,\,\,j}))$ arises from the product of an $(\alpha,\alpha)$-component of $(\theta_i^{\,\,\,j})$ with an $(\alpha,\alpha)$-component of $(\Theta_i^{\,\,\,j})$, and from the product of an $(n+1,\alpha)$-component of $(\theta_i^{\,\,\,j})$ with an $(\alpha,n+1)$-component of $(\Theta_i^{\,\,\,j})$.
    However, since all $(\alpha,\alpha)$-components of $(\theta_i^{\,\,\,j})$ are the same and  $(\Theta_{i}^{\,\,\,j})$ is trace-free, the former terms sum to zero.
    Therefore, the $\eta^{n+1}\wedge((1,1)\text{-form})$-part of $\tr\left((\theta_i^{\,\,\,j})\wedge(\Theta_i^{\,\,\,j})\right)$ is given by
    \begin{equation}
        -\frac{2!C_2}{(2r)^3}\eta^{n+1}\wedge(\eta^\top \wedge \conj{\eta})
    \end{equation}
    and hence 
    \begin{align}
        & \ch_2(\nabla^{\bm{s}},\nabla^{\underline{\mathcal{T}}}) \wedge (\ch_{q_2}\cdots\ch_{q_m})(\nabla^{\underline{\mathcal{T}}})\\
        &= -\frac{\sqrt{-1}^{n+1}C_2C_{q_2}\cdots C_{q_m}}{(2\pi)^{n+1}(2r)^{2n+1}}\eta^{n+1}\wedge(\delta_{\alpha\conj{\beta}}\eta^\alpha \wedge \eta^{\conj{\beta}})^n\\
        &= \frac{n!C_2C_{q_2}\cdots C_{q_m}}{2\cdot\pi^{n+1}(2r)^{2n+1}}dy^{n+1}\wedge\left(\wedge_{\alpha=1}^n dx^\alpha \wedge dy^\alpha\right).
    \end{align}
    Integrating along the fibers of $\pi\colon M_r \to \mathbb{S}_r^n$, we obtain 
    \begin{align}
        &\int_{M_r} \ch_2(\nabla^{\bm{s}},\nabla^{\underline{\mathcal{T}}}) \wedge (\ch_{q_2}\cdots\ch_{q_m})(\nabla^{\underline{\mathcal{T}}})\\
        &= \frac{2^{n}n!C_2C_{q_2}\cdots C_{q_m}}{(2r)^{2n+1}}\vol(\mathbb{S}^{n}_r)\\
        &= \frac{n!C_2C_{q_2}\cdots C_{q_m}}{(2r)^{n+1}}\vol(\mathbb{S}^{n})
    \end{align}
    as desired.
\end{proof}

Since the connection form $(\theta_i^{\,\,\,j})$ of $\nabla^{\underline{\mathcal{T}}}$ with respect to the coframe $(\eta^i)$ is trace-free, we have
\begin{equation}
    c_2(\nabla^{\bm{s}},\nabla^{\underline{\mathcal{T}}}) = -\ch_2(\nabla^{\bm{s}},\nabla^{\underline{\mathcal{T}}}).
\end{equation}
Therefore, we obtain the following corollary.
\begin{cor}
    The integral
    \begin{equation}
        \int_{M_r} c_2(\nabla^{\bm{s}},\nabla^{\underline{\mathcal{T}}}) \wedge (\ch_{q_2}\cdots\ch_{q_m})(\nabla^{\underline{\mathcal{T}}})
    \end{equation}
    is given by 
    \begin{equation}
        -\frac{n!C_2C_{q_2}\cdots C_{q_m}}{(2r)^{n+1}}\vol(\mathbb{S}^{n}).
    \end{equation}
\end{cor}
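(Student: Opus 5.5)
The corollary should follow from the preceding proposition once I relate $c_2(\nabla^{\bm{s}},\nabla^{\underline{\mathcal{T}}})$ to $\ch_2(\nabla^{\bm{s}},\nabla^{\underline{\mathcal{T}}})$ at the level of forms. The key observation is that the connection form $(\theta_i^{\,\,\,j})$ of $\nabla^{\underline{\mathcal{T}}}$ with respect to the frame $\bm{s}=(\partial/\partial w^i)$ is trace-free. This can be read off the explicit matrix in the first lemma of this subsection: the diagonal contributes $\frac{1}{2r(n+1)}(-n\eta^{n+1}+n\eta^{n+1})=0$.

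In this frame $\nabla^{\bm{s}}$ has zero connection form, so the path $\nabla^{(t)}$ has connection form $t\theta$ and curvature $\Omega^{(t)}=t\,d\theta+t^2\theta\wedge\theta$. Taking traces, $\tr\Omega^{(t)}=t\,d\tr\theta=0$, using $\tr(\theta\wedge\theta)=0$ for matrix-valued $1$-forms. The path therefore consists entirely of connections whose connection form and curvature are trace-free.

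Next I would use the Newton identity $c_2=\frac{1}{2}c_1^2-\ch_2$ as invariant polynomials. Polarizing and inserting into \eqref{eq:relativeCS} gives
\[
c_2(\nabla^{\bm{s}},\nabla^{\underline{\mathcal{T}}})=-\ch_2(\nabla^{\bm{s}},\nabla^{\underline{\mathcal{T}}})+2\int_0^1\widetilde{c_1^2/2}(\theta,\Omega^{(t)})\,dt .
\]
The last integrand is proportional to $\tr\theta\wedge\tr\Omega^{(t)}$, which vanishes by the previous paragraph. Hence $c_2(\nabla^{\bm{s}},\nabla^{\underline{\mathcal{T}}})=-\ch_2(\nabla^{\bm{s}},\nabla^{\underline{\mathcal{T}}})$ pointwise on $M_r$.

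Finally I wedge this identity with $(\ch_{q_2}\cdots\ch_{q_m})(\nabla^{\underline{\mathcal{T}}})$, integrate over $M_r$, and substitute the value from the preceding proposition. This yields the stated result with the minus sign.

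The only step needing care is the normalization in the polarization, namely checking that the $c_1^2$ cross-term really factors through $\tr\theta$ or $\tr\Omega^{(t)}$. It does, since $\widetilde{c_1^2}(A,B)=c_1(A)c_1(B)$ up to the symmetrization convention. Everything else is immediate from the preceding proposition.
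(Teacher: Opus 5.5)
Your proof is correct and follows the paper's argument. The paper likewise deduces $c_2(\nabla^{\bm{s}},\nabla^{\underline{\mathcal{T}}})=-\ch_2(\nabla^{\bm{s}},\nabla^{\underline{\mathcal{T}}})$ from the trace-freeness of $(\theta_i^{\,\,\,j})$ in the frame $(\partial/\partial w^i)$ and then applies the preceding proposition. Your explicit use of $c_2=\tfrac12 c_1^2-\ch_2$, together with the vanishing of the $c_1\wedge c_1$ cross-term, spells out what the paper leaves implicit. One small addition: the displayed matrix is evaluated at $p$, so trace-freeness elsewhere needs the transitive pseudogroup, which acts on $\bm{s}$ by constant gauge transformations and therefore preserves $\tr\theta$.
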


\begin{ex}
    Let us consider the case $n=3$. 
    Since the curvature form $\Theta_i^{\,\,\,j}$ is trace-free, we have $c_2(\nabla^{\underline{\mathcal{T}}}) = -\ch_2(\nabla^{\underline{\mathcal{T}}})$. Hence 
    \begin{equation}
        \widetilde{\mu}_{c_2\cdot c_2}(M_r) = \frac{75\pi^2}{256r^4}.
    \end{equation}
\end{ex}

We next treat the case $q_1\ge 3$.
\begin{prop}
    When $q_1\ge 3$, the integral 
    \begin{equation}
        \int_{M_r} \ch_{q_1}(\nabla^{\bm{s}},\nabla^{\underline{\mathcal{T}}}) \wedge (\ch_{q_2}\cdots\ch_{q_m})(\nabla^{\underline{\mathcal{T}}})
    \end{equation}
    is given by 
    \begin{equation}
        \frac{n!\widetilde{C}_{q_1}C_{q_2}\cdots C_{q_m}}{(2r)^{n+1}}\vol(\mathbb{S}^{n}),
    \end{equation}
    where 
    \begin{equation}
        \widetilde{C}_{k} = \frac{(n+2)-(n+2)^{k-1}}{k!(n+1)^k}.
    \end{equation}
\end{prop}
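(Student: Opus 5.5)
The plan is to follow the strategy of the case $q_1=2$. By the two lemmas above, \eqref{eq:CSinvariant_ch} reduces to
\[
\frac{1}{q_1!}\left(\frac{\sqrt{-1}}{2\pi}\right)^{q_1}\bigl[\tr((\theta_i^{\,\,\,j})\wedge(\Theta_i^{\,\,\,j})^{q_1-1})\bigr]_{\eta^{n+1}\wedge(q_1-1,q_1-1)}
\wedge\prod_{l\ge2}\frac{C_{q_l}}{(2r)^{2q_l}}(\eta^\top\wedge\conj{\eta})^{q_l}.
\]
So everything comes down to one claim. We need to show that the $\eta^{n+1}\wedge((q_1-1,q_1-1)\text{-form})$-part of $\frac{1}{q_1!}\tr((\theta_i^{\,\,\,j})\wedge(\Theta_i^{\,\,\,j})^{q_1-1})$ equals
\[
-\frac{\widetilde{C}_{q_1}}{(2r)^{2q_1-1}}\,\eta^{n+1}\wedge(\eta^\top\wedge\conj{\eta})^{q_1-1}.
\]
Once this is known, the final wedge product, the conversion of $\eta^{n+1}\wedge(\delta_{\alpha\conj\beta}\eta^\alpha\wedge\eta^{\conj\beta})^n$ into $dy^{n+1}\wedge\bigwedge dx^\alpha\wedge dy^\alpha$, and the fiber integration over $\pi\colon M_r\to\mathbb{S}^n_r$ are verbatim as in the preceding proposition. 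They give the stated constant with $C_2$ replaced by $\widetilde{C}_{q_1}$.

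To prove the claim, I would expand the trace according to which entry of $(\theta_i^{\,\,\,j})$ is used, with the explicit matrix in the first lemma.

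\emph{Diagonal terms.} The entries $\theta_\beta^{\,\,\,\alpha}=-\eta^{n+1}\delta_\beta^{\,\,\,\alpha}/(2r(n+1))$ and $\theta_{n+1}^{\,\,\,n+1}=n\eta^{n+1}/(2r(n+1))$ already carry $\eta^{n+1}$. They therefore pair with the $(q_1-1,q_1-1)$-part of $(\Theta^{q_1-1})$ computed modulo $\eta^{n+1}$. By bidegree counting, the $(2,0)$ piece $(n+1)\eta\wedge\eta^\top$ cannot appear there, so this part is $((\Theta)^{(1,1)})^{q_1-1}$. Its diagonal entries can be read off from the closed formula for $((\Theta)^{(1,1)})^k$ in the proof of the preceding lemma.

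Unlike the case $q_1=2$, $(\Theta^{(1,1)})^{q_1-1}$ is no longer trace-free. Hence the diagonal contribution is
\[
\frac{\eta^{n+1}}{2r(n+1)}\Bigl(-\tr\bigl((\Theta^{(1,1)})^{q_1-1}\bigr)+(n+2)\bigl((\Theta^{(1,1)})^{q_1-1}\bigr)_{n+1}^{\,\,\,n+1}\Bigr).
\]
Here the $(n+1,n+1)$-entry of $(\Theta^{(1,1)})^{q_1-1}$ vanishes, because $\eta$ has zero last component. So this term is $-\frac{(q_1-1)!\,C_{q_1-1}}{(2r)^{2q_1-1}(n+1)}\eta^{n+1}\wedge(\eta^\top\wedge\conj\eta)^{q_1-1}$.

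\emph{Off-diagonal terms.} The entry $\theta_\alpha^{\,\,\,n+1}=\eta^\alpha/(2r)$ must be paired with $(\Theta^{q_1-1})_{n+1}^{\,\,\,\alpha}$, and exactly one factor in that product must be the exceptional entry $\Theta_{n+1}^{\,\,\,\gamma}$. The bidegree count forces its $\eta^{n+1}\wedge\eta^{\conj\gamma}$ component, with coefficient $-\frac{n+2}{4r^2(n+1)}$, and forces all other factors to be $(1,1)$-parts. Since $\Theta_{n+1}^{\,\,\,\cdot}$ is a row, the chain must start with it, so this gives $\eta^\alpha\wedge\eta^{n+1}\wedge\eta^{\conj\gamma}\wedge((\Theta^{(1,1)})^{q_1-2})_\gamma^{\,\,\,\alpha}$. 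I would evaluate the $\alpha\alpha$-block of $(\Theta^{(1,1)})^{q_1-2}$ again from the closed formula; this is where the lower exponent $(n+2)^{k-1}$ should enter. The remaining entry $\theta_{n+1}^{\,\,\,\alpha}=-\eta^\alpha/(2r(n+1))$ pairs with $(\Theta^{q_1-1})_\alpha^{\,\,\,n+1}$. I expect this to vanish to the required order: the last column of $\Theta$ mod $\eta^{n+1}$ is zero, and $\Theta_\beta^{\,\,\,n+1}\propto\eta^{n+1}\wedge\eta^\beta$ gives too many holomorphic factors.

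\emph{Main obstacle.} The main obstacle is the bookkeeping in the off-diagonal term. One has to track the sign from moving $\eta^{n+1}$ to the front, handle the contraction $\eta^\alpha\wedge\eta^{\conj\gamma}\wedge(\cdots)_\gamma^{\,\,\,\alpha}$ using $(\eta^\top\wedge\conj\eta)$ identities, and then add the two contributions. I would check the combination against the requirement that it reduces to the $q_1=2$ answer, as a sanity check for the normalization, before simplifying to $\widetilde{C}_{q_1}=\frac{(n+2)-(n+2)^{q_1-1}}{q_1!(n+1)^{q_1}}$.
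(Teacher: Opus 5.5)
Most of your bookkeeping is right. The reduction to the $\eta^{n+1}\wedge((q_1-1,q_1-1)\text{-form})$-part of $\tr((\theta_i^{\,\,\,j})\wedge(\Theta_i^{\,\,\,j})^{q_1-1})$ is correct, and so is your diagonal term. So is the bidegree analysis: off the diagonal, only $\theta_\alpha^{\,\,\,n+1}=\eta^\alpha/(2r)$ survives, paired with the $\eta^{n+1}\wedge\eta^{\conj\gamma}$-part of $\Theta_{n+1}^{\,\,\,\gamma}$ and $(1,1)$-parts elsewhere, and the $\theta_{n+1}^{\,\,\,\alpha}$-term dies.

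What fails is the last step, where you expect the two pieces to combine into $\widetilde C_{q_1}$. Your diagonal term is already the entire claimed answer, because
\[
\frac{(q_1-1)!\,C_{q_1-1}}{n+1}=\frac{(n+2)-(n+2)^{q_1-1}}{(n+1)^{q_1}}=q_1!\,\widetilde C_{q_1}.
\]
This is exactly the paper's proof. It asserts that for $q_1\ge3$ only products of $(\alpha,\alpha)$-components of $(\theta_i^{\,\,\,j})$ with $(\alpha,\alpha)$-components of $((\Theta_i^{\,\,\,j})^{(1,1)})^{q_1-1}$ contribute, i.e.\ that your off-diagonal term is zero. So the $(n+2)^{q_1-1}$ does not come from the off-diagonal term, and your argument can only close if that term vanishes.

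It does not vanish. Put $S=\eta^\top\wedge\conj{\eta}$ and $X=SI_n+\eta\wedge\conj{\eta}^\top+(n+1)\conj{\eta}\wedge\eta^\top$, so that $(\Theta_i^{\,\,\,j})^{(1,1)}=X/(4r^2(n+1))$ in the paper's layout. In that layout the upper index labels rows, so that $\Theta=d\theta+\theta\wedge\theta$. Your $((\Theta^{(1,1)})^{q_1-2})_\gamma^{\,\,\,\alpha}$ is the $(\alpha,\gamma)$-entry of $X^{q_1-2}/(4r^2(n+1))^{q_1-2}$. The closed formula $X^k=S^{k-1}(SI_n+\eta\wedge\conj{\eta}^\top+((n+2)^k-1)\conj{\eta}\wedge\eta^\top)$ then gives
\[
\sum_{\alpha,\gamma}\eta^\alpha\wedge(X^k)_{\alpha\gamma}\wedge\conj{\eta}^{\gamma}=(n+2)^k\,S^{k+1}\qquad(k\ge0).
\]
Hence the off-diagonal piece equals $\frac{(n+2)^{q_1-1}}{(n+1)^{q_1-1}(2r)^{2q_1-1}}\,\eta^{n+1}\wedge S^{q_1-1}$. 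For $q_1=2$ this is exactly the term the paper keeps. Adding the diagonal piece gives $-\frac{q_1!\,C_{q_1}}{(2r)^{2q_1-1}}\,\eta^{n+1}\wedge S^{q_1-1}$.

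Contracting with the transpose of $X^k$ would give $0$ for $k\ge1$, which may be how the omission arose. That transposition is inconsistent with the paper's matrices, though. The displayed $(\theta_i^{\,\,\,j})$, and the $\eta^{n+1}$-components of $\Theta$ computed from $\Theta=d\theta+\theta\wedge\theta$, agree with what the paper states. So carrying out your plan correctly proves the formula with $C_{q_1}$ in place of $\widetilde C_{q_1}$.

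There is an independent check. By Stokes,
\[
\int_{M_r}\ch_{q_1}(\nabla^{\bm{s}},\nabla^{\underline{\mathcal{T}}})\wedge\ch_{q_2}(\nabla^{\underline{\mathcal{T}}})\wedge\cdots=\int_{M_r}\ch_{q_1}(\nabla^{\underline{\mathcal{T}}})\wedge\ch_{q_2}(\nabla^{\bm{s}},\nabla^{\underline{\mathcal{T}}})\wedge\cdots,
\]
so the value must be symmetric in the $q_j$. With $\widetilde C_{q_1}$ it is not. For $n=4$, the orderings $(q_1,q_2)=(3,2)$ and $(2,3)$ give $\widetilde C_3C_2$ and $C_2C_3$, and $C_3=(n+3)\widetilde C_3$. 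The $q_1=2$ sanity check you proposed would also have caught this, since $\widetilde C_2=0\ne C_2$.

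So the gap is not in your method but in the target. The stated constant cannot be reached except in the degenerate case where some $q_j=1$ and both sides vanish. The paper's proof reaches it only by discarding the term you correctly isolated.
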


\begin{proof}
    As in the case of $q_1=2$, we consider the $\eta^{n+1}\wedge((q_1-1,q_1-1)\text{-form})$-part of $\tr\left((\theta_i^{\,\,\,j})\wedge(\Theta_i^{\,\,\,j})^{q_1-1}\right)$.
    These terms can arise only from the product of an $(\alpha,\alpha)$-component of $(\theta_i^{\,\,\,j})$ with an $(\alpha,\alpha)$-component of $((\Theta_i^{\,\,\,j})^{(1,1)})^{q_1-1}$, and their sum is 
    \begin{equation}
        -\frac{q_1! \widetilde{C}_{q_1}}{(2r)^{2q_1-1}}\eta^{n+1}\wedge(\eta^\top \wedge \conj{\eta})^{q_1-1}.
    \end{equation}
    The remaining part of the proof follows by a similar argument.
\end{proof}

\subsection{Sasakian \texorpdfstring{$\eta$}{eta}-Einstein manifolds}
In this subsection, we compute the CR Cheeger--Simons invariant for regular Sasakian $\eta$-Einstein manifolds.
Our computation is based on the ambient construction for such manifolds due to Takeuchi \cite{takeuchi_ambient_2018}.

Let $Y$ be a compact complex manifold of complex dimension $n$, and let $\pi\colon (L,h)\to Y$ be a negative Hermitian holomorphic line bundle over $Y$.
Its curvature $\Theta_h$ defines a Kähler form on $Y$ by 
\begin{equation}
    \omega = -\sqrt{-1}\Theta_h = \sqrt{-1}\partial\conj{\partial}\log h.
\end{equation}

Let $M$ be the associated $\mathit{U}(1)$-bundle of $(L,h)$. 
It carries a natural CR structure as the boundary of the disk bundle $\Omega = \{v\in L \mid \lVert v \rVert_h < 1\}$.
The function $\log h$ is a defining function of $\Omega$, and the associated contact form
\begin{equation}
    \theta = -\sqrt{-1}\partial \log h|_{TM}
\end{equation}
is a constant multiple of the connection form of $(L,h)$.
It follows that the bundle $T^{1,0}M$ is the horizontal lift of $T^{1,0}Y$.
Since $d\theta = \pi^*\omega$, the Levi form $l_\theta$ is the pullback of the Kähler metric, and so $M$ is strictly pseudoconvex as a CR manifold.
It is known that the Tanaka--Webster connection on $T^{1,0}M$ associated with the contact form $\theta$ is the lift of the Chern connection of $\omega$ and hence the torsion $A_{\alpha\beta}$ vanishes.
In general, a strictly pseudoconvex CR manifold equipped with a contact form whose Tanaka--Webster torsion vanishes is called a \emph{Sasakian manifold}; see \cite{boyer_sasakian_2008} for Sasakian geometry.
The punctured disk bundle $\Omega\setminus Y$ is biholomorphic to the truncated Kähler cone $(0,1)\times M$ via  
\begin{equation}
    \Omega \setminus Y \ni v \mapsto \left(\lVert v \rVert_h,\frac{v}{\lVert v \rVert_h}\right) \in C(M).
\end{equation}

We assume that $\omega$ is Einstein with Einstein constant $(n+1)\lambda$.
Then the formula \eqref{eq:pseudo_Einstein} is satisfied and $\theta$ is a pseudo-Einstein contact form.
The pseudo-Einstein structure constructed in this way is called the \emph{regular Sasakian $\eta$-Einstein structure}.
Takeuchi \cite{takeuchi_ambient_2018} showed that 
\begin{equation}
    \rho = \begin{cases}
        \lambda^{-1}(h^\lambda - 1),& \lambda \ne 0,\\
        \log h,& \lambda = 0,
    \end{cases}
\end{equation}
is a Fefferman defining function of $\Omega$.
From 
\begin{align}
    -\sqrt{-1}\partial \rho &= (1+\lambda\rho)\theta,\\
    \sqrt{-1}\partial\conj{\partial}\rho &= (1+\lambda\rho)\pi^*\omega + \sqrt{-1}\lambda(1+\lambda\rho)^{-1}\partial \rho \wedge \conj{\partial} \rho,
\end{align}
we see that the two defining functions $\log h$ and $\rho$ determine the same contact form $\theta$, whereas their transverse curvatures are $0$ and $\lambda(1+\lambda\rho)^{-1}$, respectively.
This observation is fundamental for the calculations that follow.

\subsubsection{Computation of renormalized characteristic numbers}
We compute the integral 
\begin{equation}
    \int_\Omega (c_{q_1}\cdots c_{q_m})(\conj{\nabla}^g) \quad (q_1+\cdots +q_m = n+1),
\end{equation}
where $\conj{\nabla}^g$ is the renormalized connection of $\rho$.
To this end, we also use the renormalized connection $\conj{\nabla}^{\widetilde{g}}$ associated with the defining function $\log h$, where
\begin{equation}
    \widetilde{g} = -\sqrt{-1}\partial\conj{\partial}\log(-\log h).
\end{equation} 
The transgression formula \eqref{eq:difference_form} gives 
\begin{equation}
    \int_\Omega (c_{q_1}\cdots c_{q_m})(\conj{\nabla}^g) = \int_{\Omega} (c_{q_1}\cdots c_{q_m})(\conj{\nabla}^{\widetilde{g}}) + \int_M  (c_{q_1}\cdots c_{q_m})(\conj{\nabla}^{\widetilde{g}},\conj{\nabla}^g).
\end{equation}

Let $\theta^\alpha$ be a coframe of $T^{1,0}Y$, and let $\pi_{\beta}^{\,\,\,\alpha}$ and $\Pi_{\beta}^{\,\,\,\alpha}$ be the connection and curvature forms of the Chern connection of $\omega$, respectively.
By \cite[Proposition 3.5]{marugame_renormalized_2016}, the connection form of $\conj{\nabla}^{\widetilde{g}}$ with respect to the coframe $(\theta^\alpha,\theta^{n+1}=\partial\log h)$ is given near the boundary by
\begin{equation}
    \begin{pmatrix}
        \pi_{\beta}^{\,\,\,\alpha}& 0\\
        -\theta_\beta& 0
    \end{pmatrix}.
    \label{eq:renormalized_connection_logh}
\end{equation}
We choose a smooth extension of this connection to all of $\Omega$ whose connection form on $\Omega\setminus Y$ has the same diagonal and upper-right components as above, while the lower-left component vanishes in a neighborhood of the zero section $Y$.
Then the curvature form is given by 
\begin{equation}
    \begin{pmatrix}
        \Pi_{\beta}^{\,\,\,\alpha}& 0\\
        \bullet& 0
    \end{pmatrix}.
\end{equation}
Hence 
\begin{equation}
    (c_{q_1}\cdots c_{q_m})(\conj{\nabla}^{\widetilde{g}}) = (c_{q_1}\cdots c_{q_m})(\Pi_{\beta}^{\,\,\,\alpha}) = 0
\end{equation}
for dimensional reasons.
Therefore, 
\begin{equation}
    \int_{\Omega} (c_{q_1}\cdots c_{q_m})(\conj{\nabla}^{\widetilde{g}}) = 0.
\end{equation}

It remains to calculate the integral of the relative Chern--Simons form 
\begin{equation}
    \int_M  (c_{q_1}\cdots c_{q_m})(\conj{\nabla}^{\widetilde{g}},\conj{\nabla}^g).
\end{equation}
We use the Chern characters instead of the Chern polynomials.
\begin{prop}
    For $q\ge1$, set
    \begin{equation}
    \begin{aligned}
        \mathcal{A}_q(t)&=\sum_{i=0}^{q}
        \frac{1}{(q-i)!}(\lambda t c_1(L))^{q-i}\wedge\ch_i(T^{1,0}Y\oplus\underline{\mathbb{C}}),\\
        \mathcal{B}_q(t)&=\sum_{i=0}^{q-1}\frac{1}{(q-1-i)!}(\lambda t c_1(L))^{q-1-i}\wedge\ch_i(T^{1,0}Y\oplus\underline{\mathbb{C}}).
    \end{aligned}
    \end{equation}
    The integral of the relative Chern--Simons form
    \begin{equation}
        \int_M(\ch_{q_1}\cdots\ch_{q_m})(\conj{\nabla}^{\widetilde{g}},\conj{\nabla}^g)
    \end{equation}
    is given by
    \begin{equation}
        -\lambda\sum_{k=1}^{m}\int_0^1\int_Y\mathcal{B}_{q_k}(t)\wedge\bigwedge_{j\ne k}\mathcal{A}_{q_j}(t)\,dt.
    \end{equation}
\end{prop}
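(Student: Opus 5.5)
The plan is to compute the relative Chern--Simons form explicitly on $M$, using a single frame adapted to the Sasakian structure, and then integrate along the circle fibres of $\pi\colon M\to Y$. First I will write the connection forms of $\conj{\nabla}^{\widetilde{g}}$ and $\conj{\nabla}^{g}$ on $T^{1,0}\conj{\Omega}|_M$ with respect to the same coframe $(\theta^\alpha,\theta^{n+1}=\partial\log h)$. On $M$ we have $\partial\rho=\partial\log h$, because $-\sqrt{-1}\partial\rho=(1+\lambda\rho)\theta$. The connection $\conj{\nabla}^{\widetilde{g}}$ is given by \eqref{eq:renormalized_connection_logh}. For $\conj{\nabla}^g$ I will apply \cite[Proposition 3.5]{marugame_renormalized_2016}, equivalently the lower-right block of \eqref{eq:exact_Weyl+pseudo_Einstein} via Theorem \ref{thm:ambient_construction}. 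Here $A_{\alpha\beta}=0$ and the transverse curvature is the constant $\kappa=\lambda$ on $M$, so all derivatives of $\kappa$ vanish. The expected result is that the difference of the two connection forms is
\begin{equation}
    \lambda\begin{pmatrix}
        \sqrt{-1}\theta\,\delta_\beta^{\,\,\,\alpha} & \theta^\alpha\\
        0 & \sqrt{-1}\theta
    \end{pmatrix}.
\end{equation}
Consequently the interpolating connection $\nabla^{(t)}$ has connection form $\begin{pmatrix}\pi_\beta^{\,\,\,\alpha}+\sqrt{-1}\lambda t\theta\delta_\beta^{\,\,\,\alpha} & \lambda t\theta^\alpha\\ -\theta_\beta & \sqrt{-1}\lambda t\theta\end{pmatrix}$.

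Next I will compute the curvature $\Omega^{(t)}$ using $d\theta=\pi^*\omega=-\sqrt{-1}\pi^*\Theta_h$ and the structure equation $d\theta^\alpha=\theta^\beta\wedge\pi_\beta^{\,\,\,\alpha}$, which holds because the torsion vanishes. The curvature should take the form
\begin{equation}
    \Omega^{(t)}=\bigl(\lambda t\,\Theta_h\bigr)I_{n+1}+\begin{pmatrix}\Pi_\beta^{\,\,\,\alpha} & 0\\ 0 & 0\end{pmatrix}+N^{(t)},
\end{equation}
where $N^{(t)}$ is built from the off-diagonal terms $\lambda t\theta^\alpha\wedge\theta_\beta$, the terms $\pm\lambda t\,\theta\wedge\theta^\alpha$, and the like. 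Modulo these terms, the scalar part is the curvature of the line bundle $L^{\otimes\lambda t}$, and the block part is that of $T^{1,0}Y\oplus\underline{\mathbb{C}}$. Granting that $N^{(t)}$ does not contribute, $\ch_i(\Omega^{(t)})$ becomes the degree-$i$ part of $e^{\lambda t c_1(L)}\ch(T^{1,0}Y\oplus\underline{\mathbb{C}})$, which is $\mathcal{A}_i(t)$.

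The transgression formula \eqref{eq:relativeCS} applied to $\ch_q$ gives
\begin{equation}
    q\int_0^1\widetilde{\ch}_q\bigl(\omega^{(1)}-\omega^{(0)},\Omega^{(t)},\ldots\bigr)\,dt.
\end{equation}
For a product $\ch_{q_1}\cdots\ch_{q_m}$, the derivative in the relative form falls on one factor $k$, and the remaining factors give $\prod_{j\ne k}\mathcal{A}_{q_j}(t)$. In the distinguished factor, the scalar part $\sqrt{-1}\lambda\theta\, I$ of the difference paired with $\Omega^{(t)\,q_k-1}$ produces $\sqrt{-1}\lambda\theta$ times $\mathcal{B}_{q_k}(t)$, up to the factor $\frac{\sqrt{-1}}{2\pi}$ and the factorials. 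Finally, integrating $\sqrt{-1}\theta$ along the $\mathit{U}(1)$-fibres will give the constant that produces the overall factor $-\lambda$. This uses the fact that $\theta$ is a constant multiple of the connection form of $(L,h)$, normalized via $d\theta=\pi^*\omega$ and $c_1(L)=-\omega/2\pi$. Because every other factor is basic of the top base degree $2n$, only the part of the integrand containing exactly one $\theta$ survives.

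The main obstacle is justifying that the nilpotent pieces, namely the upper-right entry $\lambda t\theta^\alpha$ of the difference and the off-diagonal curvature terms $N^{(t)}$, contribute nothing to the fibre integral. My first approach is a type and degree count: every term surviving fibre integration must contain exactly one $\theta$ and a basic $(n,n)$-form. Traces of products involving the upper-right entry $\theta^\alpha$ must be paired with the lower-left entry $-\theta_\beta$ or with curvature entries carrying $\theta$. I will check that such pairings either contain two factors of $\theta$, or combine into scalar multiples of $\theta^\alpha\wedge\theta_\alpha$, that is, of $\omega$. The latter are exactly what is absorbed into the $c_1(L)$-terms. If this bookkeeping becomes unwieldy, the fallback is to conjugate by a gauge transformation that is upper triangular in the splitting $T^{1,0}Y\oplus\underline{\mathbb{C}}$, bringing $\nabla^{(t)}$ to block-triangular form modulo $\theta$. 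Block-triangular curvature then splits the traces into the two diagonal blocks.
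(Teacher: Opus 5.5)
Your outline is the paper's: both connections in the coframe $(\theta^\alpha,\partial\log h)$ via Marugame's Proposition 3.5, the linear homotopy, the transgression formula with the difference placed in one factor, and fibre integration of $\theta$. Your difference of connection forms is also correct. The gap is the step you grant, namely $\ch_q(\Omega^{(t)})=\mathcal{A}_q(t)$. It is true, but not for the reason you give, and neither of your proposed arguments would prove it. Computing with $d\theta=\sqrt{-1}\theta^\gamma\wedge\theta_\gamma$, $d\theta^\alpha=\theta^\beta\wedge\pi_\beta^{\,\,\,\alpha}$ and $d\theta_\beta=\pi_\beta^{\,\,\,\gamma}\wedge\theta_\gamma$, one finds exactly
\[
\Omega^{(t)}=\begin{pmatrix}\Pi_\beta^{\,\,\,\alpha}-tC_\beta^{\,\,\,\alpha}&0\\0&0\end{pmatrix},\qquad C_\beta^{\,\,\,\alpha}=\lambda\bigl(\delta_\beta^{\,\,\,\alpha}\theta^\gamma\wedge\theta_\gamma+\theta^\alpha\wedge\theta_\beta\bigr).
\]
All $\theta\wedge\theta^\alpha$-type terms cancel, and the $(n+1,n+1)$ entry is $0$, not $\lambda t\Theta_h$. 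Relative to your ansatz, $N^{(t)}=\mathrm{diag}(-\lambda tE,\,-\lambda t\Theta_h)$ with $E_\beta^{\,\,\,\alpha}=\theta^\alpha\wedge\theta_\beta$. These are basic $(1,1)$-forms containing no $\theta$, so no type or $\theta$-count can discard them. Nor are they ``absorbed into the $c_1(L)$-terms''. The lower-right piece by itself removes the summand $(\lambda t c_1(L))^q/q!$ contributed by $\underline{\mathbb{C}}$. You must then show that the rank-one piece $-\lambda tE$ in the upper block restores precisely that summand.

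The missing input is two algebraic identities. The first is $\Pi\wedge E=E\wedge\Pi=0$. It follows from the K\"ahler symmetries of the Chern curvature: $R_{\beta\conj{\gamma}\rho\conj{\sigma}}$ is symmetric in $\conj{\gamma},\conj{\sigma}$ while $\theta^{\conj{\sigma}}\wedge\theta^{\conj{\gamma}}$ is skew. The second is $E\wedge E=-(\tr E)\,E$, where the sign is forced by anticommuting one-forms. Set $s=\lambda t\Theta_h=-\lambda t\tr E$, $X=\Pi+sI_n$ and $F=-\lambda tE$, so the upper block is $X+F$. The identities give $XF=FX=sF$ and $\tr F^j=-(-s)^j$. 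Hence $\tr(X+F)^k=\tr X^k+s^k$, which is exactly $\ch_q(\Omega^{(t)})=\mathcal{A}_q(t)$. This is the content of the paper's relations $\tr C=-\sqrt{-1}(n+1)\lambda\omega$, $\Pi\wedge C=C\wedge\Pi=-\sqrt{-1}\lambda\omega\wedge\Pi$ and $C\wedge C=-\sqrt{-1}\lambda\omega\wedge C$.

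Your gauge-transformation fallback does not substitute for this. The curvature is already block diagonal, so splitting traces only returns $\tr((\Pi-tC)^k)$. Its equality with $\tr((\Pi+sI_n)^k)+s^k$ is a graded-algebra identity, not a conjugation. Likewise, the upper-right entry $\lambda\theta^\alpha$ of the difference drops out simply because the off-diagonal blocks of $(\Omega^{(t)})^{q-1}$ vanish. The lower-left connection entry $-\theta_\beta$ never enters the transgression integrand at all. Once the curvature is in hand, the rest of your accounting goes through as in the paper: the scalar part $\sqrt{-1}\lambda\theta I_{n+1}$ yields $\mathcal{B}_{q_k}=\mathcal{A}_{q_k-1}$, and $\int\theta=2\pi$ on each fibre.
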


\begin{proof}
    By \cite[Proposition 3.5]{marugame_renormalized_2016}, the connection form of $\conj{\nabla}^g$ with respect to the coframe $(\theta^\alpha,\theta^{n+1} = \partial \log h|_M = \partial \rho|_M)$ is given by 
    \begin{equation}
    \begin{pmatrix}
        \pi_{\beta}^{\,\,\,\alpha}+\sqrt{-1}\lambda\theta\delta_{\beta}^{\,\,\,\alpha}& \lambda\theta^{\alpha}\\
        -\theta_\beta& \sqrt{-1}\lambda\theta
    \end{pmatrix}.
    \end{equation}
    Using this and \eqref{eq:renormalized_connection_logh}, the curvature form of the homotopy of connections $(1-t)\conj{\nabla}^{\widetilde{g}}+t\conj{\nabla}^g$ becomes 
    \begin{equation}
        \begin{pmatrix}
            \Pi_{\beta}^{\,\,\,\alpha}-tC_{\beta}^{\,\,\,\alpha}& 0\\
            0& 0
        \end{pmatrix},
        \label{eq:curvature_of_homotopy}
    \end{equation}
    where $C_{\beta}^{\,\,\,\alpha} = \lambda(\delta_{\beta}^{\,\,\,\alpha}l_{\gamma\conj{\delta}} + l_{\beta\conj{\delta}}\delta_{\gamma}^{\,\,\,\alpha})\theta^\gamma \wedge \theta^{\conj{\delta}}$.

    Now the polarization $\widetilde{\ch_{q_1}\cdots\ch_{q_m}}$ satisfies 
    \begin{small}
    \begin{equation}
        \widetilde{\ch_{q_1}\cdots\ch_{q_m}}(A,B,\ldots,B) = \sum_{k=1}^{m}\frac{q_k}{n+1}\ch_{q_1}(B)\cdots\widetilde{\ch}_{q_k}(A,B,\ldots,B)\cdots\ch_{q_m}(B)
    \end{equation}
    \end{small}
    and $\widetilde{\ch}_{q_k}$ is given by 
    \begin{equation}
        \widetilde{\ch}_{q_k}(A,B,\ldots,B) = \frac{1}{q_k!}\left(\frac{\sqrt{-1}}{2\pi}\right)^{q_k} \tr(AB^{q_k-1}).
    \end{equation}
    Using 
    \begin{gather}
        \tr C = -\sqrt{-1}(n+1)\lambda\omega,\\
        \Pi \wedge C = C \wedge \Pi = -\sqrt{-1}\lambda\omega \wedge \Pi,\quad C \wedge C = -\sqrt{-1}\lambda\omega \wedge C,
    \end{gather}
    one computes
    \begin{align}
        &\widetilde{\ch}_{q}\left((\conn(\conj{\nabla}^g)-\conn(\conj{\nabla}^{\widetilde{g}}))\wedge\curv((1-t)\conj{\nabla}^{\widetilde{g}}+t\conj{\nabla}^g)^{q-1}\right)\\
        &= -\frac{\lambda}{2\pi}\theta \wedge \frac{1}{q}\sum_{i=0}^{q-1}\frac{1}{(q-1-i)!}(\lambda tc_1(\nabla^h))^{q-1-i}\wedge\ch_i(\nabla^\omega\oplus\nabla^{\underline{\mathbb{C}}})
    \end{align}
    and 
    \begin{align}
        &\ch_q\left(\curv((1-t)\conj{\nabla}^{\widetilde{g}}+t\conj{\nabla}^g)\right)\\
        &= \sum_{i=0}^{q}\frac{1}{(q-i)!}(\lambda tc_1(\nabla^h))^{q-i}\wedge\ch_i(\nabla^\omega \oplus \nabla^{\underline{\mathbb{C}}}),
    \end{align}
    where $\nabla^h$ is the Chern connection of $h$, $\nabla^\omega$ is the Chern connection of $\omega$, and $\nabla^{\underline{\mathbb{C}}}$ denotes the trivial connection on the trivial line bundle $\underline{\mathbb{C}}$.
    Substituting these into the formula \eqref{eq:relativeCS} for the relative Chern--Simons form and integrating over $M$, we obtain the stated formula.
\end{proof}

Once these renormalized characteristic numbers are computed, the value of $\widetilde{\mu}_{c_{n+1}}(M)$ can be recovered from the renormalized Chern--Gauss--Bonnet formula \eqref{eq:renormalized_CGB} together with $\chi(\Omega)=\chi(Y)$.
We remark that another expression for $\widetilde{\mu}_{c_{n+1}}(M)$ is given in \cite{takeuchi_formulae_2024}.

\begin{ex}
    Assume $\dim_{\mathbb{C}}Y = 3$.
    Since $c_2\cdot c_2 = \frac{1}{4}\ch_1^4 - \ch_1^2\cdot\ch_2 + \ch_2^2$, the CR Cheeger--Simons invariant associated with $c_2\cdot c_2$ is given by
    \begin{equation}
        \mu_{c_2\cdot c_2}(M) = -\frac{3}{4}\lambda\int_Y c_1(T^{1,0}Y)^3 + \mathbb{Z}.
    \end{equation}
\end{ex}

\subsubsection{CR Cheeger--Simons invariant associated with $c_1\cdot\Phi$}
In this subsection, we derive a convenient formula for the CR Cheeger--Simons invariant associated with an invariant polynomial of the form $c_1\cdot\Phi$. 
One motivation is that the non-integrality of the corresponding real number gives an obstruction to CR embeddability into $\mathbb{C}^{n+1}$; see Proposition \ref{thm:obstruction_to_embedding}. 
However, the examples presented below do not yield new results concerning this embedding problem.

\begin{prop}
    For $\Phi = c_{q_2}\cdots c_{q_m}$ with $q_2+\cdots+q_m=n$, we have 
    \begin{equation}
        \mu_{c_1\cdot\Phi}(M) = -(n+1)\lambda\int_Y\Phi(T^{1,0}Y) + \mathbb{Z}.
    \end{equation}
\end{prop}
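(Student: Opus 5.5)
We will evaluate $\mu_{c_1\cdot\Phi}(M)$ through the bulk--boundary formula of the previous section. By Theorem \ref{thm:reduction_theorem}, Theorem \ref{thm:ambient_construction} and the Cheeger--Simons Stokes formula, we have $\mu_{c_1\cdot\Phi}(M)=\int_\Omega(c_1\cdot\Phi)(\conj{\nabla}^g)+\mathbb{Z}$. The transgression formula then splits this as $\int_\Omega(c_1\cdot\Phi)(\conj{\nabla}^{\widetilde g})+\int_M(c_1\cdot\Phi)(\conj{\nabla}^{\widetilde g},\conj{\nabla}^g)$, exactly as in the computation of the renormalized characteristic numbers above. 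The bulk term vanishes for dimensional reasons, as already shown, because the curvature of the extended $\conj{\nabla}^{\widetilde g}$ has the block form with $\Pi_\beta^{\,\,\,\alpha}$ and zeros. So everything reduces to the boundary integral of the relative Chern--Simons form.

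Rather than using the general Chern-character formula of the preceding proposition, we will apply the product identity \eqref{eq:productCS} with the factors ordered as $\Phi\cdot c_1$. Modulo an exact form, which integrates to zero on the closed manifold $M$, this gives
\begin{equation}
(\Phi\cdot c_1)(\conj{\nabla}^{\widetilde g},\conj{\nabla}^g)
=\Phi(\conj{\nabla}^{\widetilde g})\wedge c_1(\conj{\nabla}^{\widetilde g},\conj{\nabla}^g)
+\Phi(\conj{\nabla}^{\widetilde g},\conj{\nabla}^g)\wedge c_1(\conj{\nabla}^g).
\end{equation}
The key observation is that $c_1(\conj{\nabla}^g)=0$ on $M$. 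By the formula for the curvature of the homotopy at $t=1$, the trace of the curvature of $\conj{\nabla}^g$ is $\tr\Pi-\tr C$. The Einstein condition with constant $(n+1)\lambda$ gives $\tr\Pi=-\sqrt{-1}(n+1)\lambda\omega=\tr C$. This is consistent with $c_1(\nabla^{\mathcal{T}})=0$ and Theorem \ref{thm:ambient_construction}, so the second term drops out.

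For the first term, note that $\Phi(\conj{\nabla}^{\widetilde g})=\pi^*\Phi(T^{1,0}Y)$, computed from the Chern connection of $\omega$. This holds because the curvature of \eqref{eq:renormalized_connection_logh} is block lower-triangular with diagonal blocks $\Pi$ and $0$. Next, $c_1(\conj{\nabla}^{\widetilde g},\conj{\nabla}^g)$ is $\frac{\sqrt{-1}}{2\pi}$ times the trace of the difference of the two connection forms. By \cite[Proposition 3.5]{marugame_renormalized_2016} as quoted above, this trace equals $\sqrt{-1}(n+1)\lambda\theta$, which gives $-\frac{(n+1)\lambda}{2\pi}\theta$. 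Hence the integrand becomes $-\frac{(n+1)\lambda}{2\pi}\,\theta\wedge\pi^*\Phi(T^{1,0}Y)$.

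Finally, we integrate along the circle fibers of $M\to Y$. Since $\theta=-\sqrt{-1}\partial\log h|_{TM}$ restricts on each fiber to $d\arg$, we get $\int_{\text{fiber}}\theta/2\pi=1$, which yields $-(n+1)\lambda\int_Y\Phi(T^{1,0}Y)$. The only delicate points are bookkeeping ones, and we expect the main obstacle to be the sign and orientation conventions in this last step. Specifically, we must check the orientation of $M$ as $\partial\Omega$ against the product orientation (fiber)$\times Y$ used in fiber integration, and the sign convention in \eqref{eq:productCS}. We would verify both against the previous proposition: for $q_1=1$, its term $-\lambda\int_0^1\int_Y\mathcal{B}_1\wedge\mathcal{A}_\Phi\,dt$ with $\mathcal{B}_1=\ch_0=n+1$ should reproduce the same constant.
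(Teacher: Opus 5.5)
Your proposal is correct and follows the paper's proof essentially line by line. The shared steps are: the reduction to $\int_M(c_1\cdot\Phi)(\conj{\nabla}^{\widetilde g},\conj{\nabla}^g)$ modulo $\mathbb{Z}$; \eqref{eq:productCS} in the order $\Phi\cdot c_1$, which is exactly the paper's displayed expression; $c_1(\conj{\nabla}^g)=0$; $c_1(\conj{\nabla}^{\widetilde g},\conj{\nabla}^g)=-\frac{(n+1)\lambda}{2\pi}\theta$; $\Phi(\conj{\nabla}^{\widetilde g})=\pi^*\Phi(\Pi)$; and $\int_{\pi^{-1}(y)}\theta=2\pi$. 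There is no sign problem, since $\theta$ restricts to $d\arg$ on the fibers and the boundary orientation of $M$ is that of $\theta\wedge(d\theta)^n=\theta\wedge\pi^*\omega^n$.

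Two small caveats. First, the bulk term is really an integer rather than zero. The $\xi$ of $\log h$ is the vertical Euler field, which vanishes along $Y$, so no connection smooth across $Y$ can keep it parallel; a genuine smooth extension has non-vanishing characteristic forms (for $c_{n+1}$ the bulk term is $\chi(Y)$). This is harmless here since the statement is modulo $\mathbb{Z}$, and the paper's own argument has the same feature.

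Second, your sanity check needs all terms of the previous proposition, not just the $\mathcal{B}_1$ one, because $\mathcal{A}_1(t)=(n+1)\lambda(t-1)c_1(L)$ does not vanish for $t<1$. Using $\frac{d}{dt}\mathcal{A}_q=\lambda c_1(L)\mathcal{B}_q$, the full $t$-integrand becomes $-(n+1)\lambda\frac{d}{dt}\bigl[(t-1)\prod_{j\ge2}\mathcal{A}_{q_j}(t)\bigr]$, which integrates to the same constant.
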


\begin{proof}
    By the discussion in the previous subsection, we have 
    \begin{equation}
        \mu_{c_1\cdot\Phi}(M) = \int_M (c_1\cdot\Phi)(\conj{\nabla}^{\widetilde{g}},\conj{\nabla}^g)+\mathbb{Z}.
    \end{equation}
    By \eqref{eq:productCS}, the right-hand side is equal to 
    \begin{equation}
        \int_M \left(c_1(\conj{\nabla}^{\widetilde{g}},\conj{\nabla}^g) \wedge \Phi(\conj{\nabla}^{\widetilde{g}})+c_1(\conj{\nabla}^g) \wedge \Phi(\conj{\nabla}^{\widetilde{g}},\conj{\nabla}^g)\right) +\mathbb{Z}.
    \end{equation}
    Using \eqref{eq:curvature_of_homotopy}, one computes 
    \begin{equation}
        c_1(\conj{\nabla}^{\widetilde{g}},\conj{\nabla}^g) = \frac{\sqrt{-1}}{2\pi}\tr\begin{pmatrix}
            \sqrt{-1}\lambda\theta\delta_{\beta}^{\,\,\,\alpha}& \lambda\theta^\alpha\\
            0& \sqrt{-1}\lambda\theta
        \end{pmatrix}
        = -\frac{n+1}{2\pi}\lambda\theta,
    \end{equation}
    and
    \begin{equation}
        c_1(\conj{\nabla}^g) = c_1(\Pi-C) = \frac{\sqrt{-1}}{2\pi}(\Ric(\omega)+\sqrt{-1}(n+1)\lambda\omega) = 0.
    \end{equation}
    Moreover, $\Phi(\conj{\nabla}^{\widetilde{g}})=\Phi(\Pi)$. 
    Since integration along each $\mathit{U}(1)$-fiber gives $\int_{\pi^{-1}(y)}\theta=2\pi$, the desired formula follows.
\end{proof}

\begin{ex}
    Let $M$ be the associated $\mathit{U}(1)$-bundle of 
    \begin{equation}
    \left((\det U)^{\otimes m},(\det h_{\rm FS})^{\otimes m}\right)\to\mathrm{Gr}(k,\mathbb{C}^l), \quad k(l-k)=n,
    \end{equation}
    where $U$ is the universal bundle over the Grassmannian and $h_{\rm FS}$ is the Fubini--Study metric.
    A result of \cite{arezzo_szego_2013} shows that $M$ is not spherical when $k>1$.
    The Einstein constant $\lambda$ can be computed as follows.
    First, we have $c_1((\det U)^{\otimes m}) = -\frac{1}{2\pi}\omega$, so by the Einstein condition $\Ric(\omega) = -\sqrt{-1}(n+1)\lambda\omega$, 
    \begin{equation}
        c_1(T^{1,0}\mathrm{Gr}(k,\mathbb{C}^l)) = \frac{n+1}{2\pi}\lambda\omega = -(n+1)\lambda c_1((\det U)^{\otimes m}).
    \end{equation}
    On the other hand, since $T^{1,0}\mathrm{Gr}(k,\mathbb{C}^l) = U^*\otimes(\underline{\mathbb{C}}^l/U)$, we have 
    \begin{equation}
    \begin{aligned}
        c_1(T^{1,0}\mathrm{Gr}(k,\mathbb{C}^l)) &= (l-k)c_1(U^*) + kc_1(\underline{\mathbb{C}}^l/U)\\
        &= -lc_1(U) = -\frac{l}{m}c_1((\det U)^{\otimes m}).
    \end{aligned}
    \end{equation}
    Comparing these two formulas, we obtain $\lambda = \frac{l}{(n+1)m}$.
    Consequently, we have 
    \begin{equation}
        \mu_{c_1\cdot\Phi}(M) = -\frac{l}{m}\int_{\mathrm{Gr}(k,\mathbb{C}^l)}\Phi(T^{1,0}\mathrm{Gr}(k,\mathbb{C}^l)) + \mathbb{Z}.
    \end{equation}

    For example, when $k=1$ and $l=2$, $M$ is the lens space $\mathbb{S}^3/\mathbb{Z}_m$ with the standard CR structure.
    It follows that 
    \begin{equation}
        \mu_{c_1\cdot c_1}(\mathbb{S}^3/\mathbb{Z}_m) = -\frac{4}{m}+\mathbb{Z}.
    \end{equation}
    Hence $\mathbb{S}^3/\mathbb{Z}_m$ admits no CR embedding into $\mathbb{C}^2$ whenever $m\notin\{1,2,4\}$. We remark that the remaining cases $m=2,4$ are also non-embeddable.
\end{ex}

\begin{rem}
    As a corollary of the computations in this section, we obtain $\mu_{c_1\cdot c_1} = 4\mu_{c_2}$ for regular Sasakian $\eta$-Einstein manifolds.
    On the other hand, we have the following example where the relation does not hold.

    Consider the family of CR structures on $\mathbb{S}^3 = \{(z,w)\in \mathbb{C}^2 \mid \abs{z}^2 + \abs{w}^2 = 1\}$ given by 
    \begin{equation}
        T^{1,0}_t M = \left\langle Z_1 + \frac{t}{\sqrt{1+t^2}}Z_{\conj{1}} \right\rangle, \quad Z_1 = \conj{w}\frac{\partial}{\partial z} - \conj{z}\frac{\partial}{\partial w}.
    \end{equation}
    The CR manifold $\mathbb{S}_t^3 = (\mathbb{S}^3,T_t^{1,0} \mathbb{S}^3)$ is called a \emph{Rossi sphere} \cite{rossi_attaching_1965}.
    In \cite{burns_global_1988}, Burns and Epstein computed 
    \begin{equation}
        \widetilde{\mu}_{c_2}^{\rm BE}(\mathbb{S}^3_t) = -1 + 12t^2(1+t^2).
    \end{equation}
    On the other hand, since the curvature of $\nabla^{\mathcal{T}}$ is trace-free, we have $\widetilde{\mu}_{c_1\cdot c_1}(\mathbb{S}^3_t) = 0$.
    Therefore, 
    \begin{equation}
        \mu_{c_1\cdot c_1}(\mathbb{S}^3_t) = 0+\mathbb{Z} \ne 48t^2(1+t^2)+\mathbb{Z} = 4\mu_{c_2}(\mathbb{S}^3_t)
    \end{equation}
    for generic $t$.
\end{rem}

\bibliographystyle{abbrv}
\bibliography{bibibi}

@article{marugame_renormalized_2016,
	title = {Renormalized {Chern}-{Gauss}-{Bonnet} formula for complete {Kähler}-{Einstein} metrics},
	volume = {138},
	issn = {0002-9327,1080-6377},
	doi = {10.1353/ajm.2016.0034},
	number = {4},
	journal = {Amer. J. Math.},
	author = {Marugame, Taiji},
	year = {2016},
	mrnumber = {3538151},
	pages = {1067--1094},
}

@article{marugame_renormalized_2021,
	title = {Renormalized characteristic forms of the {Cheng}-{Yau} metric and global {CR} invariants},
	volume = {377},
	issn = {0001-8708,1090-2082},
	doi = {10.1016/j.aim.2020.107468},
	journal = {Adv. Math.},
	author = {Marugame, Taiji},
	year = {2021},
	mrnumber = {4186011},
	pages = {Paper No. 107468, 55},
}

@article{burns_global_1988,
	title = {A global invariant for three-dimensional {CR}-manifolds},
	volume = {92},
	issn = {0020-9910,1432-1297},
	doi = {10.1007/BF01404456},
	number = {2},
	journal = {Invent. Math.},
	author = {Burns, Jr., Daniel M. and Epstein, Charles L.},
	year = {1988},
	mrnumber = {936085},
	pages = {333--348},
}

@article{burns_characteristic_1990,
	title = {Characteristic numbers of bounded domains},
	volume = {164},
	issn = {0001-5962,1871-2509},
	doi = {10.1007/BF02392751},
	number = {1-2},
	journal = {Acta Math.},
	author = {Burns, Jr., Daniel M. and Epstein, Charles  L.},
	year = {1990},
	mrnumber = {1037597},
	pages = {29--71},
}

@article{fefferman_monge-ampere_1976,
	title = {Monge-{Ampère} equations, the {Bergman} kernel, and geometry of pseudoconvex domains},
	volume = {103},
	issn = {0003-486X},
	doi = {10.2307/1970945},
	number = {2},
	journal = {Ann. of Math. (2)},
	author = {Fefferman, Charles L.},
	year = {1976},
	mrnumber = {407320},
	pages = {395--416},
}

@article{hislop_cr-invariants_2006,
	title = {{CR}-invariants and the scattering operator for complex manifolds with {CR}-boundary},
	volume = {342},
	issn = {1631-073X,1778-3569},
	doi = {10.1016/j.crma.2006.03.003},
	number = {9},
	journal = {C. R. Math. Acad. Sci. Paris},
	author = {Hislop, Peter D. and Perry, Peter A. and Tang, Siu-Hung},
	year = {2006},
	mrnumber = {2225870},
	pages = {651--654},
}

@book{suwa_complex_2024,
	title = {Complex analytic geometry—from the localization viewpoint},
	isbn = {978-981-4374-70-5 978-981-4374-71-2 978-981-4704-29-8},
	doi = {10.1142/8324},
	publisher = {World Scientific Publishing Co. Pte. Ltd., Hackensack, NJ},
	author = {Suwa, Tatsuo},
	year = {2024},
	mrnumber = {4729609},
}

@incollection{suwa_residue_2008,
	series = {Contemp. {Math}.},
	title = {Residue theoretical approach to intersection theory},
	volume = {459},
	isbn = {978-0-8218-4497-7},
	doi = {10.1090/conm/459/08972},
	booktitle = {Real and complex singularities},
	publisher = {Amer. Math. Soc., Providence, RI},
	author = {Suwa, Tatsuo},
	year = {2008},
	mrnumber = {2444403},
	pages = {207--261},
}

@article{chern_characteristic_1974,
	title = {Characteristic forms and geometric invariants},
	volume = {99},
	issn = {0003-486X},
	doi = {10.2307/1971013},
	journal = {Ann. of Math. (2)},
	author = {Chern, Shiing-Shen and Simons, James},
	year = {1974},
	mrnumber = {353327},
	pages = {48--69},
}

@article{chern_real_1974,
	title = {Real hypersurfaces in complex manifolds},
	volume = {133},
	issn = {0001-5962,1871-2509},
	doi = {10.1007/BF02392146},
	journal = {Acta Math.},
	author = {Chern, Shiing-Shen and Moser, Jürgen K.},
	year = {1974},
	mrnumber = {425155},
	pages = {219--271},
}

@article{tanaka_non-degenerate_1976,
	title = {On non-degenerate real hypersurfaces, graded {Lie} algebras and {Cartan} connections},
	volume = {2},
	issn = {0289-2316},
	doi = {10.4099/math1924.2.131},
	number = {1},
	journal = {Japan. J. Math. (N.S.)},
	author = {Tanaka, Noboru},
	year = {1976},
	mrnumber = {589931},
	pages = {131--190},
}

@incollection{cheeger_differential_1985,
	series = {Lecture {Notes} in {Math}.},
	title = {Differential characters and geometric invariants},
	volume = {1167},
	isbn = {978-3-540-16053-3},
	doi = {10.1007/BFb0075216},
	booktitle = {Geometry and topology ({College} {Park}, {Md}., 1983/84)},
	publisher = {Springer, Berlin},
	author = {Cheeger, Jeff and Simons, James},
	year = {1985},
	mrnumber = {827262},
	pages = {50--80},
}

@article{case_p-operator_2020,
	title = {The {P}'-operator, the {Q}'-curvature, and the {CR} tractor calculus},
	volume = {20},
	issn = {0391-173X,2036-2145},
	number = {2},
	journal = {Ann. Sc. Norm. Super. Pisa Cl. Sci. (5)},
	author = {Case, Jeffrey S. and Gover, A. Rod},
	year = {2020},
	mrnumber = {4105911},
	pages = {565--618},
}

@article{hirachi_q-prime_2014,
	title = {Q-prime curvature on {CR} manifolds},
	volume = {33},
	issn = {0926-2245,1872-6984},
	doi = {10.1016/j.difgeo.2013.10.013},
	journal = {Differential Geom. Appl.},
	author = {Hirachi, Kengo},
	year = {2014},
	mrnumber = {3159959},
	pages = {213--245},
}

@article{lempert_algebraic_1995,
	title = {Algebraic approximations in analytic geometry},
	volume = {121},
	issn = {0020-9910,1432-1297},
	doi = {10.1007/BF01884302},
	number = {2},
	journal = {Invent. Math.},
	author = {Lempert, László},
	year = {1995},
	mrnumber = {1346210},
	pages = {335--353},
}

@misc{matsumoto_cr_2022,
	title = {The {CR} {Killing} operator and {Bernstein}-{Gelfand}-{Gelfand} construction in {CR} geometry},
	url = {http://arxiv.org/abs/2205.11022},
	doi = {10.48550/arXiv.2205.11022},
	urldate = {2025-06-21},
	publisher = {arXiv},
	author = {Matsumoto, Yoshihiko},
	month = may,
	year = {2022},
	note = {arXiv:2205.11022 [math]},
}

@article{hirachi_variation_2017,
	title = {Variation of total {Q}-prime curvature on {CR} manifolds},
	volume = {306},
	issn = {0001-8708,1090-2082},
	doi = {10.1016/j.aim.2016.11.005},
	journal = {Adv. Math.},
	author = {Hirachi, Kengo and Marugame, Taiji and Matsumoto, Yoshihiko},
	year = {2017},
	mrnumber = {3581332},
	pages = {1333--1376},
}

@article{lee_pseudo-einstein_1988,
	title = {Pseudo-{Einstein} structures on {CR} manifolds},
	volume = {110},
	issn = {0002-9327,1080-6377},
	doi = {10.2307/2374543},
	number = {1},
	journal = {Amer. J. Math.},
	author = {Lee, John M.},
	year = {1988},
	mrnumber = {926742},
	pages = {157--178},
}

@article{gover_cr_2005,
	title = {{CR} invariant powers of the sub-{Laplacian}},
	volume = {583},
	issn = {0075-4102,1435-5345},
	doi = {10.1515/crll.2005.2005.583.1},
	journal = {J. Reine Angew. Math.},
	author = {Gover, A. Rod and Graham, C. Robin},
	year = {2005},
	mrnumber = {2146851},
	pages = {1--27},
}

@article{webster_pseudo-hermitian_1978,
	title = {Pseudo-{Hermitian} structures on a real hypersurface},
	volume = {13},
	issn = {0022-040X,1945-743X},
	number = {1},
	journal = {J. Differential Geometry},
	author = {Webster, Sidney M.},
	year = {1978},
	mrnumber = {520599},
	pages = {25--41},
}

@article{lee_fefferman_1986,
	title = {The {Fefferman} metric and pseudo-{Hermitian} invariants},
	volume = {296},
	issn = {0002-9947,1088-6850},
	doi = {10.2307/2000582},
	number = {1},
	journal = {Trans. Amer. Math. Soc.},
	author = {Lee, John M.},
	year = {1986},
	mrnumber = {837820},
	pages = {411--429},
}

@article{takeuchi_formulae_2024,
	title = {Formulae of some global {CR} invariants for {Sasakian} eta-{Einstein} manifolds},
	volume = {32},
	issn = {1019-8385,1944-9992},
	doi = {10.4310/cag.241015022326},
	number = {3},
	journal = {Comm. Anal. Geom.},
	author = {Takeuchi, Yuya},
	year = {2024},
	mrnumber = {4828194},
	pages = {667--692},
}

@article{takeuchi_ambient_2018,
	title = {Ambient constructions for {Sasakian} eta-{Einstein} manifolds},
	volume = {328},
	issn = {0001-8708,1090-2082},
	doi = {10.1016/j.aim.2018.01.007},
	journal = {Adv. Math.},
	author = {Takeuchi, Yuya},
	year = {2018},
	mrnumber = {3771124},
	pages = {82--111},
}

@article{arezzo_szego_2013,
	title = {Szegö kernel, regular quantizations and spherical {CR}-structures},
	volume = {275},
	issn = {0025-5874,1432-1823},
	doi = {10.1007/s00209-013-1178-1},
	number = {3-4},
	journal = {Math. Z.},
	author = {Arezzo, Claudio and Loi, Andrea and Zuddas, Fabio},
	year = {2013},
	mrnumber = {3127055},
	pages = {1207--1216},
}

@incollection{rossi_attaching_1965,
	title = {Attaching analytic spaces to an analytic space along a pseudoconcave boundary},
	booktitle = {Proc. {Conf}. {Complex} {Analysis} ({Minneapolis}, 1964)},
	publisher = {Springer, Berlin-Heidelberg-New York},
	author = {Rossi, H.},
	year = {1965},
	mrnumber = {176106},
	pages = {242--256},
}

@article{case_paneitz-type_2013,
	title = {A {Paneitz}-type operator for {CR} pluriharmonic functions},
	volume = {8},
	issn = {2304-7909,2304-7895},
	number = {3},
	journal = {Bull. Inst. Math. Acad. Sin. (N.S.)},
	author = {Case, Jeffrey S. and Yang, Paul},
	year = {2013},
	mrnumber = {3135070},
	pages = {285--322},
}

@article{case_cal_2023,
	title = {{\textbackslash}{Cal} {I}'-curvatures in higher dimensions and the {Hirachi} conjecture},
	volume = {75},
	issn = {0025-5645,1881-1167},
	doi = {10.2969/jmsj/87718771},
	number = {1},
	journal = {J. Math. Soc. Japan},
	author = {Case, Jeffrey S. and Takeuchi, Yuya},
	year = {2023},
	mrnumber = {4539017},
	pages = {291--328},
}

@book{penrose_spinors_1984,
	series = {Cambridge {Monographs} on {Mathematical} {Physics}},
	title = {Spinors and space-time. {Vol}. 1},
	isbn = {978-0-521-24527-2},
	doi = {10.1017/CBO9780511564048},
	publisher = {Cambridge University Press, Cambridge},
	author = {Penrose, Roger and Rindler, Wolfgang},
	year = {1984},
	mrnumber = {776784},
}

@book{boyer_sasakian_2008,
	series = {Oxford {Mathematical} {Monographs}},
	title = {Sasakian geometry},
	isbn = {978-0-19-856495-9},
	publisher = {Oxford University Press, Oxford},
	author = {Boyer, Charles P. and Galicki, Krzysztof},
	year = {2008},
	mrnumber = {2382957},
}

@incollection{tanaka_graded_1966,
	title = {Graded {Lie} algebras and geometric structures},
	booktitle = {Proc. {U}.{S}.-{Japan} {Seminar} in {Differential} {Geometry} ({Kyoto}, 1965)},
	publisher = {Nippon Hyoronsha Co., Ltd., Tokyo},
	author = {Tanaka, Noboru},
	year = {1966},
	mrnumber = {222802},
	pages = {147--150},
}

@article{borel_sur_1953,
	title = {Sur la cohomologie des espaces fibrés principaux et des espaces homogènes de groupes de {Lie} compacts},
	volume = {57},
	issn = {0003-486X},
	doi = {10.2307/1969728},
	journal = {Ann. of Math. (2)},
	author = {Borel, Armand},
	year = {1953},
	mrnumber = {51508},
	pages = {115--207},
}

@book{cap_parabolic_2009,
	series = {Mathematical {Surveys} and {Monographs}},
	title = {Parabolic geometries. {I}},
	volume = {154},
	isbn = {978-0-8218-2681-2},
	doi = {10.1090/surv/154},
	publisher = {American Mathematical Society, Providence, RI},
	author = {{\v{C}}ap, Andreas and Slovák, Jan},
	year = {2009},
	mrnumber = {2532439},
	doi = {10.1090/surv/154},
}

@article{farris_intrinsic_1986,
	title = {An intrinsic construction of {Fefferman}'s {CR} metric},
	volume = {123},
	issn = {0030-8730,1945-5844},
	number = {1},
	journal = {Pacific J. Math.},
	author = {Farris, Frank A.},
	year = {1986},
	mrnumber = {834136},
	pages = {33--45},
}

@article{cheng_burns-epstein_1990,
	title = {The {Burns}-{Epstein} invariant and deformation of {CR} structures},
	volume = {60},
	issn = {0012-7094,1547-7398},
	doi = {10.1215/S0012-7094-90-06008-9},
	number = {1},
	journal = {Duke Math. J.},
	author = {Chêng, Jih Hsin and Lee, John M.},
	year = {1990},
	mrnumber = {1047122},
	pages = {221--254},
}
\end{document}